\theoremstyle{plain}
\newtheorem{theorem}{Theorem}[section]
\newtheorem*{theorem*}{Theorem}
\newtheorem{lemma}[theorem]{Lemma}
\newtheorem{corollary}[theorem]{Corollary}
\newtheorem*{corollary*}{Corollary}
\newtheorem{remark}[theorem]{Remark\textbf{}}
\newtheorem*{remark*}{Remark}
\newtheorem{proposition}[theorem]{Proposition}
\newtheorem{conjecture}[theorem]{Conjecture}
\newtheorem{problem}[theorem]{Problem}
\theoremstyle{definition}
\newtheorem{definition}[theorem]{Definition}
\def\mD{\mathbb{D}}
\def\deg{\mathrm{deg}}
\def\ds{\rule{0pt}{1.5ex}}
\newcommand\DOI[1]{{DOI:#1}}
\newcommand\junk[1]{}
\newcommand{\dd}[3]{\operatorname{\mathbb D}(#1,#2,#3)}
\newcommand{\dds}[4]{\operatorname{\mathbb D}(#1,#2,#3,#4)}
\newcommand{\lb}{\operatorname{LEG}}
\newcommand{\pp}[1]{\partial(#1)}
\newcommand{\DD}[1]{\mathcal D^{#1}}
\newcommand{\lex}{\operatorname{\triangleleft_{lex}}}
\newcommand{\simple}{primitive}
\newcommand{\FG}{fully graphic}
\newcommand{\eg}{Erdős-Gallai}
\newcommand{\cone}{cone region}
\newcommand{\jms}{\varphi^*_{J\!M\!S}}
\newcommand{\jmsw}{\varphi^\circ_{J\!M\!S}}
\begin{document}

\begin{frontmatter}
\title{Fully graphic degree sequences and P-stable degree sequences\tnoteref{t1}}
    \tnotetext[t1]{The authors thank to Gábor Lippner the
    interesting discussions about the topic}
\author[renyi]{Péter L. Erdős\fnref{elp}}
\author[renyi]{István Miklós\fnref{elp}}
\author[renyi]{Lajos Soukup\fnref{soukup}}
\address[renyi]{HUN-REN Alfréd Rényi Institute of Mathematics, Reáltanoda u 13--15 Budapest, 1053 Hungary\\
\texttt{$<$erdos.peter,miklos.istvan,soukup.lajos$>$@renyi.hun-ren.hu}}
\fntext[elp]{The authors were supported in part by NKFIH grant SNN~135643, K~132696}
\fntext[soukup]{LS was supported in part by NKFIH grant K~129211.}

\begin{abstract}
The notion of $P$-stability of an infinite set of degree sequences plays influential role in  approximating the permanents, rapidly sampling the realizations of graphic degree sequences, or even studying and improving network privacy. While there exist several known sufficient conditions for $P$-stability, we don't know any useful necessary condition for it. We also do not have good insight of possible structure of $P$-stable degree sequence families.

At first we will show that every known infinite $P$-stable degree sequence set, described by inequalities of the parameters $n, c_1, c_2, \Sigma$ (the sequence length, the maximum and minimum degrees and the sum of the degrees) is ,,fully graphic" meaning that   every degree sequence from the region with an even degree sum, is graphic. Furthermore, if $\Sigma$ does not occur in the determining inequality, then the notions of $P$-stability and full graphicality will be proved equivalent. In turns, this equality provides a strengthening of the well-known theorem of Jerrum, McKay and Sinclair about $P$-stability, describing the maximal $P$-stable sequence set by $n, c_1, c_2$. Furthermore we conjecture that similar equivalences occur in cases if $\Sigma$ also part of the defining inequality.

\end{abstract}
\begin{keyword}
\FG\ degree sequences, simple degree sequence region,  $P$-stable degree sequences, switch Markov chain
\end{keyword}

\end{frontmatter}

\section{Introduction}\label{sec:intro}

In this paper we explore the connection  between P-stability and the ``fully graphic'' property of specific regions of degree sequences, establishing  a close relationship between these concepts.

We start the section with recalling and introducing some notions and notations.
If $n>c_1\ge c_2$ and $\Sigma$ are natural numbers with $n\cdot c_1\ge \Sigma \ge n\cdot c_2$, let
\begin{align*}
&\dd{n}{c_1}{c_2}=\{(d_1,\dots,d_n)\in \mathbb N^n: c_1\ge d_1\ge\dots \ge d_n\ge c_2, \textstyle \sum_{i=1}^nd_i \text{ is even}\},\\\medskip
&\dds{n}{\Sigma}{c_1}{c_2}=\{(d_1,\dots,d_n)\in \dd{n}{c_1}{c_2}:\
\textstyle\sum_{i=1}^nd_i=\Sigma\}.
\end{align*}
We will refer the elements of $\dd{n}{c_1}{c_2}$ as \textbf{degree sequences}. As usual, an element of
$\dd{n}{c_1}{c_2}$ is \textbf{graphic} if there is a simple graph with these degrees. Otherwise the element is \textbf{non-graphic}. We say that a  collection $\mathbb D $ of  degree sequences is  a \textbf{simple degree sequence region} if and only if there exists a property $\varphi (n, \Sigma, c_1,c_2)$ such that
\begin{equation}\label{eq:simpleset}\tag{SR}
\mathbb D=\mathbb D[\varphi]\coloneqq\bigcup\left\{\mD(n,\Sigma,c_1,c_2)\ :\
n>c_1\ge c_2 \text{ and } \varphi (n, \Sigma, c_1,c_2)\text{ holds}
\right\}.
\end{equation}
Similarly, we say that a  collection $\mathbb D $ of  degree sequences is a \textbf{very simple degree sequence region} if and only if there exists a property  $\psi (n, c_1,c_2)$ such that
\begin{equation}\label{eq:verysimpleset}\tag{VSR}
\mathbb D=\mathbb D[\psi]\coloneqq\bigcup\left\{\mD(n,c_1,c_2)\ :\
n>c_1\ge c_2 \text{ and } \psi (n,  c_1,c_2)\text{ holds} \right\}.
\end{equation}
We will use the expression \textbf{(very) simple region} for short. \medskip

\noindent
A (very) simple  region $\mathbb D$  will be called  \textbf{fully graphic} if and only if every degree sequence from the region is graphic. $\mathbb D$ is \textbf{almost fully graphic} if and only if $\mathbb D\setminus \mathcal D$ is fully graphic for some finite $\mathcal D$.

Given a graphic degree sequence ${D}$ of length $|D|=n$ let
\begin{equation}\label{eq:localP}
\pp {{D}}=\sum_{1\le i< j\le n}{|\mathcal{G}({D}+1^{-i}_{-j})|}/{|\mathcal{G}({D})|}.
\end{equation}
where the vector $1^{-i}_{-j}$  is comprised of all zeros, except at the $i$th and $j$th coordinates, where the values are $-1$. The operation $D\mapsto {D}+1^{-i}_{-j}$  is called  a \textbf{perturbation operation} on the degree sequences. We define two more similar operations: $D+1^{-i}_{+j}$  and $D+1^{+i}_{+j}$ analogously. Let us emphasize that we do not assume that $i$ and $j$ are different, so for example the operation $D+1^{+j}_{+j}$ is also  defined, and it adds 2 to the $j^{th}$ coordinate.

We say that a family $\mD$ of degree sequences  is {\bf P-stable} if there is a  polynomial $p(n)$ such that $\pp{{D}}\le p(|{D}|)$ for each graphic element $D$ of $\mD$. Let us emphasize that we do not require   every element of a P-stable family  to be  graphic. The P-stability has alternative, but equivalent definitions  using different  perturbation operations. In the Appendix of this paper, we describe a short history of these definitions, and prove their equivalence. (As far as we are aware, this is the first explicit discussion of this topic in writing.)

The  P-stability of an infinite  family of degree sequences is an important property. It would be  enough to mention one result:  the \textbf{switch Markov Chain} process rapidly mixes on  P-stable families (see \cite[Theorems 8.3]{P-stable}).  Furthermore that P-stability plays an influential role in  approximating the permanents, and  even studying and improving network privacy.

Determining whether a particular family is P-stable is typically challenging. There exist only a few results  establishing the P-stability of families of degree sequences for simple graphs. First, we examine three of them which implicitly considered  simple and very simple regions.

\begin{enumerate}[{\rm (P1)}]
\item (Jerrum, McKay, Sinclair \cite{JMS92}) The very simple  degree sequence region $\mathbb D[{\varphi}_{\ds \! {J\!M\!S}}]$ is P-stable, where $\varphi_{\ds \! {J\!M\!S}}\equiv (c_1-c_2+1)^2 \le 4 c_2(n-c_1-1)$.
\item (Greenhill, Sfragara \cite{G18}) The  simple degree sequence region $\mathbb D[{\varphi}_{\ds \! {GS}}]$ is P-stable, where  $\varphi_{\ds \! {GS}}\equiv (2\le c_2\text{ and }  3\le c_1 \le \sqrt{\Sigma/9})$. (This result was not announced explicitly, but \cite[Lemma 2.5]{G18} clearly proved this fact.)
\item (Jerrum, McKay, Sinclair \cite{JMS92}) The simple  degree sequence region $\mathbb D[\jms]$ is P-stable, where $    {\jms} \equiv (\Sigma-nc_2) (nc_1-\Sigma)\le (c_1-c_2)\left\{(\Sigma-nc_2)(n-c_1-1)+ (nc_1-\Sigma)c_2\right\}.$
\end{enumerate}

In contrast with P-stability,  the classical  theorem of Paul Erdős and Gallai (\cite{EG}) makes  easy to check if a certain sequence is graphic or not.  As an extension of this result,  in Section \ref{sec:non-graphic} we show that every  simple region $\dds{n}{\Sigma}{c_1}{c_2}$ contains exactly one {\bf primitive} element, (i.e.a sequence  in the form $(c_1,\dots, c_1, a,c_2,\dots c_2)$),  and the region $\dds{n}{\Sigma}{c_1}{c_2}$ is fully graphic if and only if its primitive element is graphic   (see Theorem \ref{tm:SIMPLE}).

In this way we have established a machinery to decide whether   certain (very) simple regions are fully graphic or not.   Using that machinery, in Theorems \ref{th:JMS}, \ref{th:GS} and \ref{th:JMS2} we show the following
\begin{enumerate}[{\rm (P1${}^*$)}]
\item The  very simple  region 	$\mathbb D[{\varphi}_{\ds \! {J\!M\!S}}]$ is fully graphic.
\item The simple region $\mathbb D[{\varphi}_{\ds \! {GS}}]$ is fully graphic.
\item The simple region $\mathbb D[\jms]$ is fully graphic.
\end{enumerate}
When comparing statements (P1) and (P2) with their counterparts (P1${}^*$) and (P2${}^*$), a fundamental question arises, serving as the central focus of this paper: ``{\em What is the connection between P-stability and the fully graphic property  of specific (very) simple regions?}''

Concerning the ``{\em fully graphic $\longrightarrow$ P-stable}'' direction, if we consider very simple regions, then we can prove the following strengthening of (P1) which is actually the strongest possible result (Theorem \ref{tm:GR2PIntr}):
\begin{theorem*}
The largest  fully graphic very simple region
\begin{displaymath}
\mD_{\max}\coloneqq\bigcup\{\dd{n}{c_1}{c_2}:\text{ $\dd{n}{c_1}{c_2}$ is fully graphic}\}
\end{displaymath}
is  $P$-stable, and so the switch Markov chain is rapidly mixing on $\mD_{\max}$.
\end{theorem*}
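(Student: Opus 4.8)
The plan is to reduce the statement to the $P$-stability of the simple region $\mathbb D[\jms]$ from (P3), by showing the containment $\mD_{\max}\subseteq\mathbb D[\jms]$ and then exploiting the fact that $P$-stability is an \emph{intrinsic} property of a family. Indeed, the quantity $\pp D$ in \eqref{eq:localP} depends only on $D$ and on the graphs realizing $D$ and its perturbations, never on the ambient family; hence if $p(n)$ is the polynomial witnessing the $P$-stability of $\mathbb D[\jms]$ and $\mD_{\max}\subseteq\mathbb D[\jms]$, then every graphic $D\in\mD_{\max}$ is a graphic element of $\mathbb D[\jms]$ and so satisfies $\pp D\le p(|D|)$. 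Thus $P$-stability passes to subfamilies, and $\mD_{\max}$ is $P$-stable with the same $p$. The rapid mixing of the switch Markov chain is then immediate from \cite[Theorem 8.3]{P-stable}.

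The whole argument therefore reduces to the containment $\mD_{\max}\subseteq\mathbb D[\jms]$. First I would unfold the definitions: a term $\dd{n}{c_1}{c_2}$ contributes to $\mD_{\max}$ precisely when it is fully graphic, i.e.\ when $\dds{n}{\Sigma}{c_1}{c_2}$ is fully graphic for every feasible $\Sigma$ (those with $nc_2\le\Sigma\le nc_1$ and $\Sigma$ even). By Theorem \ref{tm:SIMPLE} this is equivalent to the single primitive element of each $\dds{n}{\Sigma}{c_1}{c_2}$ being graphic. Consequently it suffices to prove the implication that, for each feasible $\Sigma$, if the primitive element of $\dds{n}{\Sigma}{c_1}{c_2}$ is graphic then $\jms(n,\Sigma,c_1,c_2)$ holds. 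Granting this, every $\Sigma$-slice of a fully graphic $\dd{n}{c_1}{c_2}$ lies in $\mathbb D[\jms]$, hence so does the whole slice, and therefore so does all of $\mD_{\max}$.

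The heart of the matter — and the step I expect to be the main obstacle — is this implication between graphicality of the primitive element and the inequality $\jms$. I would prove it by writing the primitive element $(c_1,\dots,c_1,a,c_2,\dots,c_2)$ explicitly, setting $X=\Sigma-nc_2$ and $Y=nc_1-\Sigma$ (so that $X+Y=n(c_1-c_2)$), which up to the single middle entry $a$ encode the numbers of $c_1$- and $c_2$-entries, and then applying the \eg\ inequalities. The technical work is to locate the binding \eg\ index, which sits near the number of $c_1$-entries $k\approx X/(c_1-c_2)$, and to verify by a convexity/monotonicity argument over $k$ that the single worst constraint rearranges exactly into $XY\le(c_1-c_2)\{X(n-c_1-1)+Yc_2\}$, that is, into $\jms$. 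Care is needed with the regimes $c_2\lessgtr k$, with the fractional bookkeeping forced by the middle entry $a$, and with the trivial boundary cases ($c_1=c_2$, or $\Sigma$ at its extreme values $nc_2,\,nc_1$), where the primitive element is graphic automatically.

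In fact I expect this computation to give the clean equivalence that $\dds{n}{\Sigma}{c_1}{c_2}$ is fully graphic if and only if $\jms(n,\Sigma,c_1,c_2)$ holds; combined with (P3${}^*$) this would identify $\mathbb D[\jms]$ as the largest fully graphic \emph{simple} region, although for the present theorem only the direction ``graphic primitive $\Rightarrow\jms$'' is actually needed. Once $\mD_{\max}\subseteq\mathbb D[\jms]$ is established, the theorem follows from the intrinsic-subfamily argument of the first paragraph together with (P3) and \cite[Theorem 8.3]{P-stable}.
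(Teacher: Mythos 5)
Your reduction framework is sound in one respect: the quantity $\pp D$ is intrinsic to $D$, so $P$-stability does pass to subfamilies, and rapid mixing would then follow from \cite[Theorem 8.3]{P-stable}. The fatal problem is that the containment $\mD_{\max}\subseteq \mD[\jms]$ on which everything rests is false, and the ``clean equivalence'' you expect (that $\dds{n}{\Sigma}{c_1}{c_2}$ is fully graphic iff $\jms(n,\Sigma,c_1,c_2)$ holds) is refuted inside the paper itself: Theorem \ref{tm:d_0} exhibits the fully graphic region $\mD_0=\bigcup\{\dds{2m}{4m}{m}{1}:m\ge 4\}$ with $\mD_0\cap\mD[\jms]=\emptyset$. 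The containment fails even when restricted to slices of fully graphic \emph{very simple} regions, because full graphicity of $\dd{n}{c_1}{c_2}$ only yields $c_1k\le k(k-1)+c_2(n-k)+1$ for all $k$ (Lemma \ref{th:surprise}); the extra $+1$ is a parity effect, arising when $c_1k+(n-k)c_2$ is odd so that $(c_1)_k(c_2)_{n-k}$ is not itself in the region, whereas $\jms$ evaluated at the corresponding slice rearranges (via \eqref{eq:new2}) to the same inequality \emph{without} the $+1$, up to a marginal term $x(2k+x-c_1-1-c_2)<1$ when $c_1-c_2\ge 2$. Concretely: $\dd{5}{3}{1}$ is fully graphic (the primitive elements for $\Sigma=6,8,10,12,14$ are $(2,1,1,1,1)$, $(3,2,1,1,1)$, $(3,3,2,1,1)$, $(3,3,3,2,1)$, $(3,3,3,3,2)$, all graphic, so Theorem \ref{tm:SIMPLE} applies), yet at the slice $\Sigma=10$ we get $(\Sigma-nc_2)(nc_1-\Sigma)=5\cdot 5=25$ while $(c_1-c_2)\left\{(\Sigma-nc_2)(n-c_1-1)+(nc_1-\Sigma)c_2\right\}=2(5+5)=20$, so $\jms$ fails. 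Nor is this a finite anomaly absorbable into ``almost'': taking $c_2=2$, $k$ odd, $n=(k^2-1)/2$, $c_1=2k-3$ makes $\alpha^2-(c_1+c_2+1)\alpha+nc_2=(\alpha-k)^2-1$, and one checks that $\dd{n}{c_1}{c_2}$ is fully graphic while the even slice $\Sigma=kc_1+(n-k)c_2+1$ violates $\jms$ (e.g.\ $\dd{12}{7}{2}$ with $\Sigma=50$: $26\cdot 34=884>860=5(26\cdot 4+34\cdot 2)$). So $\mD_{\max}\not\subseteq\mD[\jms]$ on an infinite family, and the reduction to (P3) cannot be repaired.

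This $+1$ slack is exactly the obstruction the paper flags when it notes that full graphicity of $\dd{n}{c_1}{c_2}$ does not imply the inequality \eqref{eq:bitweaker} behind Theorem \ref{tm:jms2}, and it is why the paper does not argue by containment in any previously known $P$-stable region. Instead it proves a genuinely new stability criterion, Theorem \ref{tm:jms-plusss}: any graphic $D$ with $\sum_{i=1}^k d_i\le k(k-1)+d_n(n-k)+1$ for all $k$ satisfies $\pp D\le 3n^9$; the proof strengthens the Jerrum--McKay--Sinclair alternating-path lemma to alternating trails of length up to $7$, and it is precisely these longer trails that absorb the parity surplus. Combining this with Lemma \ref{th:surprise} gives the theorem directly. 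If you want to salvage your subfamily strategy, this is the missing ingredient: you would need a $P$-stable target region whose defining inequality tolerates the $+1$, and no such region was available before Theorem \ref{tm:jms-plusss}.
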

We do not know whether  a similar statement holds for simple regions.
\begin{problem}
Is it true that the largest fully graphic simple region 	
\begin{displaymath}
\mD'=\bigcup\{\dds{n}{\Sigma}{c_1}{c_2}:\text{ $\dds{n}{\Sigma}{c_1}{c_2}$ is fully graphic}\}
\end{displaymath}
is  $P$-stable?
\end{problem}

Concerning the ``{\em P-stable $\longrightarrow$ fully graphic }'' direction,
we have  partial  result claiming that a P-stable very simple region should contain
large fully graphic very simple regions provided the region satisfies some natural
restrictions. To formulate our result precisely, let us say that a very simple region $\mathbb D$ is a {\bf \cone} if and only if for some functions  $f,g\in \mathbb N^{\mathbb N}$ we have
\begin{displaymath}
\mathbb D=\mathbb D(f,g)\coloneqq\bigcup\{\dd{n}{g(n)}{f(n)}:n\in \mathbb N\}.
\end{displaymath}
In Section 	\ref{sec:P-graphic} we prove the following results
(Theorem \ref{tm:P2FG} and Corollary \ref{cor:P2Gr})	
\begin{theorem*}
Assume that $f,g,h\in \mathbb N^{\mathbb N}$ are increasing functions .
If the  \cone\ $\mathbb D(f,g)$ is P-stable, then $\mathbb D(f+h,g-h)$ is almost fully graphic provided
\begin{enumerate}[{\rm (1)}]
\item $f(n+k)\le f(n)+k$ for each $n,k\in \mathbb N$,
\item $lim_{n\to \infty}h(n)/\ln(n)=\infty$.
\end{enumerate}
\end{theorem*}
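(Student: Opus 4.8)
The plan is to first reduce the problem to a statement about primitive elements, and then to use P-stability as a quantitative device that keeps the number of realizations large along a carefully chosen path of perturbations.

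First I would reduce to primitives. By Theorem~\ref{tm:SIMPLE}, a slice $\dds{n}{\Sigma}{c_1}{c_2}$ is fully graphic iff its unique primitive element is graphic; hence $\mathbb D(f+h,g-h)$ is almost fully graphic iff all but finitely many of its primitive elements are graphic. So it suffices to produce an $n_0$ such that for every $n\ge n_0$ and every admissible $\Sigma$ the primitive element $P=P(n,\Sigma)$ of $\dd{n}{(g-h)(n)}{(f+h)(n)}$ — the sequence $(c_1,\dots,c_1,a,c_2,\dots,c_2)$ with $c_1=g(n)-h(n)$ and $c_2=f(n)+h(n)$ — is graphic.

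The observation that makes P-stability usable is that a down--down perturbation along an existing edge preserves graphicness: if $D$ is graphic and some realization contains the edge $\{i,j\}$, then deleting it realizes $D+1^{-i}_{-j}$. I would therefore fix a manifestly graphic anchor $R=R(n)$ that dominates $P$ coordinatewise and lies in the wider region $\mathbb D(f,g)$ — for instance a near-regular sequence at a level $d^{*}$ with $c_1\le d^{*}\le g(n)$, which is graphic and has an enormous number of realizations — and connect $R$ to $P$ by a path $R=D_0,D_1,\dots,D_L=P$ of single edge-deletions, each step decreasing the two coordinates that are still above their target values, so that every $D_t$ stays inside $\mathbb D(f,g)$. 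Graphicness then propagates along the path as long as each step can be carried out, i.e.\ as long as $D_t$ admits a realization containing an edge between the two coordinates to be decreased. Here P-stability enters: the equivalent ascending/transfer formulations from the Appendix let me bound the ratio $|\G(D_{t})|/|\G(D_{t+1})|$ by $p(n)$ at each step, so that $|\G(D_t)|\ge |\G(R)|/p(n)^{t}$ throughout; since $|\G(R)|$ is exponentially large, the count never collapses to the regime in which \emph{all} realizations avoid a prescribed edge, and the path reaches $P$ with $|\G(P)|>0$.

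The main obstacle is exactly this quantitative propagation near the \eg\ boundary: I must show that whenever $|\G(D_t)|$ is bounded below as above, the required edge is present in some realization, equivalently that the realizations avoiding a fixed pair cannot exhaust all of $\G(D_t)$. The gap between ``all realizations'' and ``those avoiding one edge'' is governed by the available room, and quantifying it is where hypothesis~(2), $h(n)/\ln n\to\infty$, is decisive: the room forces a realization surplus of order $e^{\Omega(h(n))}$, which is super-polynomial and therefore dominates the accumulated factor $p(n)^{L}=e^{O(L\ln n)}$ once the path length $L$ is controlled. Hypothesis~(1), the slow-growth bound $f(n+k)\le f(n)+k$ on the floor, I expect to use to keep the anchor level $d^{*}$ and the whole path inside $\mathbb D(f,g)$ uniformly in $n$ and to carry the single polynomial $p$ across all lengths; the finitely many small $n$ where the room is insufficient are absorbed into the exceptional set, yielding ``almost'' rather than ``fully'' graphic. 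A cleaner packaging of the same idea is the contrapositive: from an infinite family of non-graphic narrow primitives one extracts, for each bad $n$, a barely graphic witness $D_n\in\mathbb D(f,g)$ sitting on the \eg\ boundary whose down-neighbours gain a factor $e^{\Omega(h(n))}$ in realizations, so that $\pp{D_n}\ge e^{\Omega(h(n))}$ is super-polynomial in the length, contradicting P-stability.
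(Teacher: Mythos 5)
Your proposal has the right opening reduction (to primitive elements, via Theorem \ref{tm:SIMPLE} and Corollary \ref{th:check-simple}), but the main mechanism --- propagating graphicness along an edge-deletion path from an exponentially popular anchor $R$ down to the primitive $P$ --- contains a circular step that the counting argument cannot repair, and its quantitative bookkeeping fails. Deleting the edge $\{v_i,v_j\}$ from a realization of $D_t$ is possible iff some realization of $D_t$ contains that edge, which is essentially equivalent to graphicness of $D_{t+1}=D_t+1^{-i}_{-j}$ --- the very property being propagated. Your proposed repair, that ``realizations avoiding a fixed pair cannot exhaust all of $\mathcal G(D_t)$'' once $|\mathcal G(D_t)|$ is large, is not a theorem: nothing in P-stability (or elsewhere in the paper) ties the cardinality $|\mathcal G(D_t)|$ to the presence of one prescribed edge, and the avoiding set can simply be all of $\mathcal G(D_t)$. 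Note also that the stability inequality runs the wrong way for your induction: $\pp{D}$ bounds $|\mathcal G(D+1^{-i}_{-j})|$ above by $p(n)\,|\mathcal G(D)|$, so to get $|\mathcal G(D_{t+1})|\ge |\mathcal G(D_t)|/p(n)$ you must apply stability (in its $\DD{++}$ form from the Appendix) \emph{at} $D_{t+1}$, which again presupposes $D_{t+1}$ is graphic. Finally, even granting the propagation step, the numbers do not work: a deletion path from a near-regular anchor at level $d^*\ge g(n)-h(n)$ down to a primitive with most coordinates near $f(n)+h(n)$ has length $L=\bigl(\sum R-\sum P\bigr)/2=\Theta(n^2)$ whenever $g-f=\Theta(n)$ (exactly the situation of Corollary \ref{cor:P2Gr}), so the accumulated factor $p(n)^{L}=e^{\Theta(n^2\ln n)}$ swamps any surplus $e^{\Omega(h(n))}$; hypothesis (2) permits $h(n)=(\ln n)^2$, far below what your scheme would need, and $L$ cannot be ``controlled'' to $o(h(n)/\ln n)$.

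Your closing sentence --- the contrapositive with a ``barely graphic witness whose perturbation gains a factor $e^{\Omega(h(n))}$'' --- is indeed the paper's actual route, but it is the entire proof rather than a repackaging, and all three of its ingredients are absent from your proposal. The paper (Theorem \ref{tm:P-stb2EG}) first shows that a region $\dd{n}{c_1}{c_2}$ that is not fully graphic contains a \emph{graphic split} degree sequence, constructed explicitly from the failure of the \eg\ inequality at the primitive element (Theorem \ref{tm:split-in-non-graphic}); it then pads a split realization by the Tyshkevich product with the rigid half-graph sequence $\mathbf h_m$, $m\approx h(n)$, whose unique realization becomes $e^{\Omega(m)}$-fold degenerate after a single perturbation (Lemma \ref{lm:big-gap}, quoting the enumeration of \cite{horse}); multiplicativity of realization counts under the product (Theorem \ref{tm:t-main}) then yields a witness $\mathbf e_k\in\mathbb D(f,g)$ with one realization whose perturbed sequence has $e^{\Omega(h(n))}$ realizations, contradicting P-stability precisely because $h(n)/\ln n\to\infty$. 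This is also where hypothesis (1) actually enters: $f(n+k)\le f(n)+k$ guarantees the padded parameters satisfy $c'_{k,2}\le c_{k,2}$, i.e.\ that the witness lands inside $\mathbb D(f,g)$ --- not, as you assign it, to keep an anchor level inside the region. Without the split extraction, the rigid padding with one-step exponential instability, and the product multiplicativity, your sketch has no construction of the witness, so the essential content of the theorem is missing.
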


\begin{corollary*}
Assume that $0\le {\varepsilon}_2<{\varepsilon}'_2<{\varepsilon}'_1< {\varepsilon}_1\le 1$. 	
If the very simple region $\mD\coloneqq\bigcup_{n\in {\mathbb N}}\dd{n}
{\lfloor {\varepsilon}_1\cdot n \rfloor }{\lceil {\varepsilon}_2\cdot n\rceil }
$ is P-stable, then the region $\mD'=\bigcup_{n\in {\mathbb N}}\dd{n}
{\lfloor {\varepsilon}'_1\cdot n \rfloor }{\lceil {\varepsilon}'_2\cdot n\rceil }
$ is almost fully graphic.
\end{corollary*}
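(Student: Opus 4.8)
The plan is to deduce the corollary directly from the preceding theorem (Theorem~\ref{tm:P2FG}) by choosing the three functions $f,g,h$ appropriately. Put
\[
g(n)=\lfloor\varepsilon_1 n\rfloor,\qquad f(n)=\lceil\varepsilon_2 n\rceil,
\]
so that $\mathbb{D}(f,g)=\mD$ holds by the very definition of a \cone, and $\mD$ is P-stable by hypothesis. For the shrinkage function I would take $h(n)=\lfloor\delta n\rfloor$ with
\[
\delta=\tfrac12\min\{\varepsilon_1-\varepsilon'_1,\ \varepsilon'_2-\varepsilon_2\}>0,
\]
the positivity being guaranteed by the strict inequalities $\varepsilon'_1<\varepsilon_1$ and $\varepsilon_2<\varepsilon'_2$. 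All three functions are non-decreasing and unbounded; in particular $h(n)/\ln n\to\infty$ since $h$ grows linearly, which is condition~(2). Condition~(1), $f(n+k)\le f(n)+k$, follows from $\varepsilon_2\le 1$ via $\lceil\varepsilon_2(n+k)\rceil\le\lceil\varepsilon_2 n\rceil+\lceil\varepsilon_2 k\rceil\le\lceil\varepsilon_2 n\rceil+k$. Hence the theorem applies and $\mathbb{D}(f+h,g-h)$ is almost \FG.

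It then remains to show that $\mD'$ is contained in $\mathbb{D}(f+h,g-h)$ up to finitely many sequences, after which I would invoke the elementary fact that any subset of an almost \FG\ region, and any enlargement of such a set by a finite family, is again almost \FG. For a fixed $n$ one has $\dd{n}{a}{b}\subseteq\dd{n}{a'}{b'}$ precisely when $a\le a'$ and $b\ge b'$, so the inclusion of the $n$-th slices reduces to $\lfloor\varepsilon'_1 n\rfloor\le\lfloor\varepsilon_1 n\rfloor-h(n)$ and $\lceil\varepsilon'_2 n\rceil\ge\lceil\varepsilon_2 n\rceil+h(n)$, i.e.\ to $h(n)\le\lfloor\varepsilon_1 n\rfloor-\lfloor\varepsilon'_1 n\rfloor$ and $h(n)\le\lceil\varepsilon'_2 n\rceil-\lceil\varepsilon_2 n\rceil$. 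Both right-hand sides grow like a positive multiple of $n$, whereas $h(n)\le\delta n\le\tfrac12\min\{(\varepsilon_1-\varepsilon'_1)n,(\varepsilon'_2-\varepsilon_2)n\}$ by the choice of $\delta$; thus both inequalities hold for all large $n$, and the finitely many remaining slices each contribute only finitely many sequences. Consequently $\mD'$ differs from the almost \FG\ set $\mD'\cap\mathbb{D}(f+h,g-h)$ by a finite family, which finishes the argument.

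The one genuinely delicate point is the asymmetry of the two gaps: a single function $h$ must simultaneously absorb the shrinkage of the upper bound (of order $(\varepsilon_1-\varepsilon'_1)n$) and of the lower bound (of order $(\varepsilon'_2-\varepsilon_2)n$), so one is forced to keep $h$ below the smaller of the two, which is exactly why the minimum and the factor $\tfrac12$ enter. I would also confirm that the shrunk \cone\ is nondegenerate, i.e.\ $g(n)-h(n)\ge f(n)+h(n)$ for large $n$: since $g(n)-f(n)=(\varepsilon_1-\varepsilon_2)n+O(1)$ and $\varepsilon'_1>\varepsilon_2$ forces $\varepsilon_1-\varepsilon'_1<\varepsilon_1-\varepsilon_2$, we get $2h(n)\le(\varepsilon_1-\varepsilon'_1)n<g(n)-f(n)$ eventually. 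Everything beyond these verifications is routine floor/ceiling bookkeeping, so I expect no real obstacle once the functions are pinned down.
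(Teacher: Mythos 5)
Your proposal is correct and follows essentially the same route as the paper, which states this corollary as a direct consequence of Theorem~\ref{tm:P2FG}: instantiating $f(n)=\lceil\varepsilon_2 n\rceil$, $g(n)=\lfloor\varepsilon_1 n\rfloor$ and a linear $h(n)=\lfloor\delta n\rfloor$ with $2\delta\le\min\{\varepsilon_1-\varepsilon'_1,\varepsilon'_2-\varepsilon_2\}$ is exactly the intended derivation. Your verifications of conditions (1) and (2), the slice-by-slice inclusion of $\mD'$ into $\mathbb D(f+h,g-h)$ for large $n$, and the finite-exceptions bookkeeping correctly fill in the routine details the paper leaves implicit.
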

\noindent This is a partial answer for the problem raised in the Abstract: the structure of each very simple $P$-stable regions is essentially fully determined. The proof of the  previous theorem is  based on some observations concerning split graphs and Tyshkevich product.

\medskip
In Section \ref{sec:stable}, besides proving that the largest graphic very simple region is P-stable,  we also construct new P-stable simple regions.  To do so we should improve a method  which was developed in \cite{JMS92} to prove (P1). In \cite{JMS92} to obtain (P1) Jerrum, McKay and Sinclair actually proved that $\pp{{D}}\le n^{10}$ for each  ${{D}}\in \dd{n}{c_1}{c_2}$ provided ${\varphi}_{\ds \! {J\!M\!S}}(n,c_1,c_2)$ holds. Using a finer analysis we can estimate $\pp{{D}}$ for a single degree sequence under milder assumptions. Namely,  we will prove the following statement  (Theorem \ref{tm:jms-plusss}):

\begin{theorem*}
If  a graphic degree sequence	 ${D}=(d_1, d_2, \dots, d_n)$ satisfies
\begin{equation}\label{eq:weaker-revi}\tag{\ref{eq:weaker-revisitedd}}
\forall k\in [1,n] \quad  \sum_{i=1}^k d_i\le k\cdot (k-1)+d_n\cdot(n-k)+1,
\end{equation}
then  $\pp{{D}}\le 3\cdot n^{9}$.
\end{theorem*}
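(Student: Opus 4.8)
The plan is to bound $\pp{D}=\sum_{1\le i<j\le n}|\G(D+1^{-i}_{-j})|/|\G(D)|$ term by term and then to sum over the fewer than $n^2/2$ pairs, so it suffices to show $R_{ij}\coloneqq|\G(D+1^{-i}_{-j})|/|\G(D)|\le 6n^7$ for every pair (terms in which a coordinate would become negative contribute $0$ and may be dropped). For this I would construct, for each realization $G'$ of the perturbed sequence $D+1^{-i}_{-j}$, a realization $G=\Phi(G')$ of $D$ by a local \emph{augmentation}, obtaining an everywhere-defined map $\Phi\colon\G(D+1^{-i}_{-j})\to\G(D)$. Since $|\G(D+1^{-i}_{-j})|=\sum_{G\in\G(D)}|\Phi^{-1}(G)|$, the ratio $R_{ij}$ is at most the largest fibre size of $\Phi$, and the whole problem reduces to (a) making $\Phi$ everywhere defined and (b) bounding the number of preimages of a fixed $G$.

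The augmentation must raise the degrees of $i$ and $j$ by one each while leaving every other degree fixed. First I would try the two cheapest moves: if $ij\notin E(G')$, add the edge $ij$; otherwise look for an edge $k\ell\in E(G')$ with $k\notin N_{G'}(i)$ and $\ell\notin N_{G'}(j)$ and perform the single swap $G=G'-k\ell+ik+j\ell$, which changes only the degrees of $i$ and $j$. When neither move is available the non-neighbourhoods of $i$ and $j$ are saturated, and I would repair $G'$ by a bounded-length alternating chain of such swaps that transports the two missing incidences out of the saturated region to a place where an edge can finally be rerouted. Each swap is individually reversible, and from a fixed $G$ the reverse of a chain of length $t$ is reconstructed by choosing its at most $t$ auxiliary edges, i.e.\ from at most $(n^2)^t$ candidates; hence a fibre of $\Phi$ has size at most $(n^2)^{t_0}$ once the chain length is bounded by a constant $t_0$. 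The decisive quantity is therefore the maximal chain length the hypothesis forces.

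The hard part --- and this is exactly the ``finer analysis'' promised in the text --- is to prove that (\ref{eq:weaker-revi}) forces augmenting chains of \emph{constant} length. I would argue contrapositively: if for a given $G'$ no augmenting chain of length $\le t_0$ reaches a reroutable edge, then a Hall-type saturation must hold along all short alternating walks emanating from $i$ and $j$, forcing the top of the sequence to behave like the Erdős--Gallai extremal (threshold / \simple) realization, in which the high-degree vertices form a near-clique and the remaining vertices are attached as densely as the minimum degree $d_n$ permits. Translating this saturation into an edge count, I expect to obtain, for a suitable $k$, a lower bound on $\sum_{l=1}^{k}d_l$ that exceeds the threshold $k(k-1)+d_n(n-k)$ by more than the single unit of slack allowed, contradicting (\ref{eq:weaker-revi}); the role of the term $+1$ is precisely to guarantee the survival of one reroutable edge within a bounded radius. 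This is where the global hypothesis on all partial sums has to be converted into a purely local statement about a single pair $(i,j)$ in a single realization, and I expect it to be the main obstacle.

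With the chain length bounded by a constant $t_0$, the reverse count gives $R_{ij}\le(n^2)^{t_0}$; the contribution of the paper over \cite{JMS92} is to carry out this bookkeeping tightly --- counting only the genuinely free auxiliary choices at each swap, several of whose endpoints are already pinned --- so that the crude estimate improves to $R_{ij}\le 6n^7$ and hence, summing over the at most $n^2/2$ pairs, to the stated bound $\pp{D}\le 3n^9$, one power below the $n^{10}$ of (P1).
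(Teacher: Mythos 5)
Your overall architecture --- map each realization of the perturbed sequence back to a realization of $D$ by a short local augmentation, bound the fibres of that map by counting the possible modifications, and extract the shortness of the modification from a contradiction with the degree-sum hypothesis --- is exactly the strategy of the paper (which in turn refines \cite{JMS92}). But the proposal has a genuine gap at precisely the step you yourself flag as the main obstacle: the claim that \eqref{eq:weaker-revi} forces augmenting chains of bounded length is never proved, only announced (``I would argue contrapositively\dots'', ``I expect to obtain\dots''). That claim is the entire mathematical content of the theorem. The paper proves it as Lemma \ref{lm:path-rev}: if $G$ realizes $D+1^{+p}_{+q}$ with $\Gamma(v_p)=\Gamma(v_q)$ and there is \emph{no} alternating trail of odd length $1,3,5$ or $7$ between $v_p$ and $v_q$ with one more edge than non-edge, then setting $X=\Gamma(v_p)$, $Y=\{y:|X\setminus \Gamma(y)|\ge 2\}$ and $Z=\Gamma(Y)\setminus X$, one shows $X$ and $Z$ are cliques, $Y$ is independent, and $G[X,Z]$ is complete; hence $K=X\cup Z$ is a clique of size $k$, and summing degrees over $K$ (using $|X|\ge d_n+3-|S|$ and the case split $|S|\in\{1,2\}$) yields $\sum_{i=1}^{k}d_i\ge k(k-1)+(n-k)d_n+2$, which beats the $+1$ slack in \eqref{eq:weaker-revi}. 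Your sketch contains none of this: no identification of the saturated sets, no proof of the clique/independence/completeness structure, and no verification that the surplus is $2$ rather than $1$ --- which is exactly where the theorem's constant is won. Without it you have no bound on your $t_0$ at all, hence no polynomial bound on the fibres.

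Two further concrete divergences from what actually works. First, your two ``cheap moves'' do not handle the asymmetric case cleanly: when $\Gamma_{G'}(v_i)\ne\Gamma_{G'}(v_j)$, the paper first flips a length-$2$ trail $v_iv_mv_j$ so that both missing increments land on the single vertex $v_j$ (degree sequence $D+1^{+j}_{+j}$), and then invokes the lemma with $p=q=j$; the allowance $p=q$ in the lemma is essential and absent from your setup, where a chain of whole swaps replaces what is in fact a single alternating trail of length at most $7$ flipped all at once. Second, your bookkeeping is per pair ($R_{ij}\le 6n^7$, summed over $n^2/2$ pairs) with fibre bound $(n^2)^{t_0}$, which would require $t_0\le 3$ or so, and nothing in the sketch pins $t_0$ down. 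The paper counts globally instead: the trail $P$ has at most $8$ vertices and already determines the endpoints $i,j$, giving at most $n^8$ preimages in the symmetric case and at most $2\cdot n^2\cdot n^7$ in the asymmetric one ($n^7$ for the closed trail, $n^2$ for $Q$, a factor $2$ for labelling its endpoints), for a total of $n^8+2n^9\le 3n^9$. So the framework is right, but the proof is missing its core lemma and the counting scheme that delivers the stated exponent.
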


In \cite{JMS92} the authors proved that $\mD[{\varphi}_{\ds \! {J\!M\!S}}]\subset \mD[{\jms}]$, i.e.  (P3) is a stronger statement than  (P1). However,  the following result (Theorem \ref{tm:d_0}) shows that  there exists simple $P$-stable regions which is completely disjoint from  (P3):
	
\begin{theorem*}
The simple region
\begin{equation}
\mD_0=\bigcup\{\dds{2m}{4m}{m}{1}:m\ge 4\}
\end{equation}
is fully graphic and  P-stable,  but $\mD_0\cap \mD[\varphi^*_{J\!M\!S}]=\emptyset$.	\end{theorem*}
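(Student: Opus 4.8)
The plan is to establish the three assertions separately, reducing full graphicness to Theorem~\ref{tm:SIMPLE}, P-stability to Theorem~\ref{tm:jms-plusss}, and the disjointness to a direct substitution into $\jms$. For full graphicness I would first locate the unique primitive element of $\dds{2m}{4m}{m}{1}$. Filling greedily with the largest admissible value $m$, at most two copies of $m$ can be used, since a third would force the remaining $2m-3$ coordinates below the lower bound $1$ (indeed $2m/(m-1)<3$ for $m\ge4$); the greedy rule then assigns the intermediate value $3$ and fills the rest with $1$'s, so the primitive element is $(m,m,3,1^{2m-3})$. I would check it is graphic by an explicit construction: take vertices $u,v$ (to have degree $m$) and $w$ (degree $3$), add the edges $uv,uw,vw$ and one pendant at $w$, and attach $m-2$ further pendant vertices to each of $u$ and $v$; this uses exactly $1+2(m-2)=2m-3$ leaves and realises the sequence. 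By Theorem~\ref{tm:SIMPLE}, graphicity of the primitive element gives that $\dds{2m}{4m}{m}{1}$ is fully graphic for every $m\ge4$, hence so is $\mD_0$.

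Next I would verify that every $D\in\mD_0$ satisfies the hypothesis of Theorem~\ref{tm:jms-plusss}. The point is that, for fixed parameters, the primitive element is lexicographically largest and therefore dominates all partial sums, and these maximal partial sums equal $\sum_{i=1}^k d_i=\min(km,\,2m+k)$, the first term binding for $k\le2$ and the second for $k\ge3$. Since the degree sum is $4m=2\cdot(2m)$, any $D$ other than $(2,\dots,2)$ has a coordinate below the average and hence $d_n=1$; for such $D$ the required bound reads $\sum_{i=1}^k d_i\le k(k-1)+(2m-k)+1=k^2-2k+2m+1$, which by the domination reduces to the elementary inequalities $km\le k^2-2k+2m+1$ for $k\le2$ and $k\le(k-1)^2$ for $k\ge3$, both valid. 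The sequence $(2,\dots,2)$ satisfies the same bound with $d_n=2$. Theorem~\ref{tm:jms-plusss} then yields $\pp{D}\le3n^9$ for all $D\in\mD_0$, so $\mD_0$ is P-stable. I expect this to be the delicate step: the bound is tight at $k=3$, so $\mD_0$ lies exactly on the boundary where the per-sequence estimate of Theorem~\ref{tm:jms-plusss} is needed and the coarser estimates behind (P1) and (P3) do not apply -- precisely the contrast the theorem is designed to display.

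Finally, for the disjointness I would evaluate $\jms$ at the defining parameters $(n,\Sigma,c_1,c_2)=(2m,4m,m,1)$. Here $\Sigma-nc_2=2m$, $nc_1-\Sigma=2m(m-2)$ and $c_1-c_2=n-c_1-1=m-1$, so the defining inequality becomes $4m^2(m-2)\le2m(m-1)(2m-3)$, that is $2m^2-4m\le2m^2-5m+3$, i.e.\ $m\le3$. Hence for every $m\ge4$ the tuple $(2m,4m,m,1)$ violates $\jms$, so no constituent region $\dds{2m}{4m}{m}{1}$ of $\mD_0$ is among the regions whose union defines $\mD[\jms]$, which is the claimed disjointness $\mD_0\cap\mD[\jms]=\emptyset$. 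This confirms that the P-stability of $\mD_0$ is not a consequence of (P3) and genuinely requires the finer Theorem~\ref{tm:jms-plusss}.
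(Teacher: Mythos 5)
Your proposal is correct and takes essentially the same route as the paper: both identify the primitive element $D_m=(m,m,3)^\frown(1)_{2m-3}=\lb(2m,4m,m,1)$ and reduce full graphicity to it via Theorem~\ref{tm:SIMPLE}, both establish P-stability by verifying \eqref{eq:weaker-revisitedd} and invoking Theorem~\ref{tm:jms-plusss}, and both obtain the disjointness by substituting $(n,\Sigma,c_1,c_2)=(2m,4m,m,1)$ into the $\jms$ inequality, where your reformulation ``$m\le 3$'' matches the paper's computation $LHS-RHS=2m^2-6m>0$ for $m\ge 4$. The only cosmetic differences are that you certify graphicity of $D_m$ by an explicit realization (a triangle with pendant vertices) where the paper checks $EG_2$ and $EG_3$ via the Tripathi--Vijay Theorem~\ref{tm:TV+}, and that you verify \eqref{eq:weaker-revisitedd} for all sequences directly through majorization by the primitive element, which is precisely the content of the paper's Theorem~\ref{tm:leg-p-stable} (your phrase ``lexicographically largest and therefore dominates all partial sums'' should really argue via $d_i\ge c_2$ on the tail, as in that theorem, since lex-maximality alone does not imply domination of partial sums).
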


\section{Characterization of \FG \ regions $\dds{n}{\Sigma}{c_1}{c_2}$} \label{sec:non-graphic}
In this section we study fully graphic simple degree sequence regions.

\begin{definition}\label{df:EG}
A non-increasing sequence $(d_1,\dots, d_n)$  has the {\em Erdős-Gallai property} if and only if for each $1\le k\le n$,
\begin{displaymath}\tag{EG${}_k$}\label{eq:EG-prop}
\sum_{i=1}^kd_i\le k(k-1)+\sum_{i=k+1}^n\min(d_i,k).
\end{displaymath}
\end{definition}
\noindent	
The next statement is almost trivial, but will be very convenient later on.
\begin{proposition}\label{th:trivi}
For all $D\in \dds{n}{\Sigma}{c_1}{c_2}$, property (\ref{eq:EG-prop}) holds for each $k\le c_2$ and $k> c_1$.
\end{proposition}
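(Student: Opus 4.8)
The plan is to split into the two ranges of $k$ and exploit the fact that in each range the term $\min(d_i,k)$ on the right-hand side of (\ref{eq:EG-prop}) degenerates. Observe first that when $k\le c_2$, every coordinate satisfies $d_i\ge d_n\ge c_2\ge k$, so $\min(d_i,k)=k$; and when $k>c_1$, every coordinate satisfies $d_i\le d_1\le c_1<k$, so $\min(d_i,k)=d_i$. In each case the sum $\sum_{i=k+1}^n\min(d_i,k)$ collapses to a single closed form, and (\ref{eq:EG-prop}) reduces to a one-line comparison.

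For the first range, $k\le c_2$, I would evaluate the right-hand side of (\ref{eq:EG-prop}) as
\[
k(k-1)+\sum_{i=k+1}^nk=k(k-1)+(n-k)k=k(n-1).
\]
For the left-hand side, using $d_i\le c_1$ for every $i$ together with the standing hypothesis $n>c_1$ (equivalently $c_1\le n-1$), I obtain $\sum_{i=1}^kd_i\le k\,c_1\le k(n-1)$. Comparing the two bounds gives (\ref{eq:EG-prop}).

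For the second range, $k>c_1$, the right-hand side becomes $k(k-1)+\sum_{i=k+1}^nd_i$, which is at least $k(k-1)$ since the tail sum is nonnegative. For the left-hand side I again use $d_i\le c_1$, and now $k>c_1$ forces $c_1\le k-1$, so $\sum_{i=1}^kd_i\le k\,c_1\le k(k-1)$. Hence the left-hand side is dominated by $k(k-1)$ alone, a fortiori by the full right-hand side, establishing (\ref{eq:EG-prop}).

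There is no genuine obstacle here, which is why the statement is flagged as almost trivial: the entire content is the remark that the ``$\min$'' term degenerates outside the interval $(c_2,c_1]$, after which both inequalities follow from the two elementary bounds $c_1\le n-1$ and $c_1\le k-1$. The delicate middle range $c_2<k\le c_1$ is deliberately excluded, since that is precisely where graphicality can fail and where the substantive work of the later results (e.g.\ Theorem~\ref{tm:SIMPLE}) is concentrated.
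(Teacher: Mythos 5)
Your proof is correct and follows essentially the same route as the paper's: bound the left-hand side by $k\,c_1$ and compare it with $k(n-1)=k(k-1)+(n-k)k$ when $k\le c_2$, and with $k(k-1)$ when $k>c_1$. You merely spell out explicitly the degeneration of the $\min(d_i,k)$ terms that the paper leaves implicit.
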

\begin{proof}
When $k\le c_2$ then
$$
\sum_{i=1}^kd_i\le k c_1\le k(n-1)=k(k-1)+(n-k)k.
$$
If $k > c_1$ then $kc_1\le k(k-1).$
\end{proof}

We denote by $(a)_m$  the constant   $a$ sequence of length $m$.
\begin{definition}\label{df:simple}
If $n>c_1\ge c_2$ are natural numbers, we say that a sequence
$D=(d_1,\dots, d_n)\in \dds{n}{c_1}{c_2}$ is {\bf\simple} if and only if $D$ has the form
\begin{equation}\label{eq:simple}
D= \underbrace{c_1,\ldots,c_1}_k, a, \underbrace{c_2, \ldots, c_2}_{(n-1)-k},
\end{equation}
for some  $1\le k\le n $ and $c_2\le a\le c_1$.
\end{definition}

\begin{definition}\label{df:MBLB}
If $n>c_1\ge c_2$ and $\Sigma$ are natural numbers with $n\cdot c_1\ge \Sigma \ge n\cdot c_2$, we define the \emph{least \eg\ sequence $\lb(n,\Sigma,c_1,c_2)$} of length $n$ with the given sum $\Sigma$ and with given maximum and minimum elements $c_1$ and $c_2$ as follows:\\
If $c_1=c_2$, then let $\lb(n,\Sigma,c_1,c_2)=(c_1)_n$. If $c_2<c_1$, then let
\begin{equation}\label{eq:formLB}
	\lb(n,\Sigma,c_1,c_2)= \underbrace{c_1,\ldots,c_1}_{\alpha}, a, \underbrace{c_2, \ldots, c_2}_{(n-1)-\alpha},
	\end{equation}
where
\begin{equation}\label{eq:alpha}
\alpha=\left \lfloor \frac{\Sigma - n\cdot c_2}{c_1-c_2}\right \rfloor
\end{equation}
and
\begin{equation}\label{eq:a}
a = \Sigma -{\big (}{\alpha}\cdot c_1+ (n-1-{\alpha})\cdot c_2{\big )}.
\end{equation}
\end{definition}

\begin{theorem}\label{tm:SIMPLE}
Assume that $n>c_1\ge c_2$ and $\Sigma$ are natural numbers with $n\cdot c_1\ge \Sigma \ge n\cdot c_2$, $\Sigma$ is even.
\begin{enumerate}[{\rm (1)}]
\item $\lb(n,\Sigma,c_1,c_2)\in \dds{n}{\Sigma}{c_1}{c_2}$ and it is \simple.
\item If $\lb(n,\Sigma,c_1,c_2)$ is \eg, then every element of $\dds{n}{\Sigma}{c_1}{c_2}$ is \eg.
\end{enumerate}
\end{theorem}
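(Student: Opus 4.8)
The plan is to handle the two parts separately, with part~(2) carrying all the content.

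\textbf{Part (1).} This is a direct calculation. Writing $\Sigma-n\,c_2=\alpha(c_1-c_2)+r$ with $0\le r<c_1-c_2$ (the Euclidean division underlying \eqref{eq:alpha}) and substituting into \eqref{eq:a}, I get $a=\Sigma-(n-1)c_2-\alpha(c_1-c_2)=c_2+r$, so that $c_2\le a<c_1$. Since $\Sigma\ge n\,c_2$ forces $\alpha\ge 0$ and $\Sigma<n\,c_1$ forces $\alpha\le n-1$, the block lengths $\alpha$ and $(n-1)-\alpha$ are nonnegative; hence the sequence \eqref{eq:formLB} is non-increasing with all entries in $[c_2,c_1]$, its sum is $\Sigma$ by the choice of $a$, and it therefore lies in $\dds{n}{\Sigma}{c_1}{c_2}$ (where $\Sigma$ is of course assumed even) and is \simple\ by its very shape. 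The degenerate cases $c_1=c_2$ and $\Sigma=n\,c_1$, which make \eqref{eq:alpha} boundary or vacuous, are handled separately: each forces the constant sequence $(c_1)_n$, which is trivially \simple.

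\textbf{Part (2), reduction.} By Proposition~\ref{th:trivi}, condition \eqref{eq:EG-prop} is automatic for every $D\in\dds{n}{\Sigma}{c_1}{c_2}$ whenever $k\le c_2$ or $k>c_1$, so I only need to verify it for a fixed $k$ with $c_2<k\le c_1$. For such a $k$ put $S_D(l)=\sum_{i=1}^l d_i$ and
\[
g_D(k)=\sum_{i=1}^k d_i-\sum_{i=k+1}^n\min(d_i,k),
\]
so that \eqref{eq:EG-prop} for $D$ reads $g_D(k)\le k(k-1)$. Since $\lb:=\lb(n,\Sigma,c_1,c_2)$ is assumed \eg, we already have $g_{\lb}(k)\le k(k-1)$; it therefore suffices to prove the extremality statement $g_D(k)\le g_{\lb}(k)$ for every $D$ in the region and every such $k$.

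\textbf{Part (2), main step.} This is the crux. Because $D$ is non-increasing, the positive summands of $\sum_{i>k}(d_i-k)^+$ occupy a prefix of the tail, which gives the identity $\sum_{i>k}(d_i-k)^+=\max_{0\le m\le n-k}\bigl(S_D(k+m)-S_D(k)-mk\bigr)$; rewriting $\min(d_i,k)=d_i-(d_i-k)^+$ then yields
\[
g_D(k)=\max_{0\le m\le n-k}\bigl(S_D(k)+S_D(k+m)-mk\bigr)-\Sigma .
\]
The value of this reformulation is that it expresses $g_D(k)$ purely through partial sums, for which $\lb$ is extremal: for every $l$ one has $S_D(l)\le l\,c_1$ and $S_D(l)=\Sigma-\sum_{i>l}d_i\le\Sigma-(n-l)c_2$, hence $S_D(l)\le\min\bigl(l\,c_1,\ \Sigma-(n-l)c_2\bigr)=S_{\lb}(l)$, the final equality being the short computation that $\lb$ front-loads $c_1$'s and back-loads $c_2$'s. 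Inserting $S_D(k)\le S_{\lb}(k)$ and $S_D(k+m)\le S_{\lb}(k+m)$ into the bracket for each $m$ and then taking the maximum gives $g_D(k)\le g_{\lb}(k)$, which finishes the proof.

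\textbf{Main obstacle.} I expect the only real difficulty to be precisely the identity in the main step. Without recasting the $\min$-term as a maximum of shifted partial sums, the naive term-by-term comparison ``the head sum goes up while the tail sum $\sum_{i>k}\min(d_i,k)$ goes down'' breaks in the range $k\le\alpha$: there the tail of $\lb$ still contains entries equal to $c_1>k$, so $\lb$ does \emph{not} minimize the tail term and is not comparable to $D$ coordinate-by-coordinate. Passing to partial sums repairs this, since $\lb$ is unconditionally the partial-sum (majorization) maximum of the region. If one prefers a purely combinatorial route to the same conclusion, an alternative is an exchange argument: every ``outward'' unit transfer $d_i\mapsto d_i+1,\ d_j\mapsto d_j-1$ with $i<j$ that stays within $[c_2,c_1]$ and respects the ordering can be checked never to decrease $g_D(k)$, and iterating such transfers drives any $D$ to the unique \simple\ element $\lb$.
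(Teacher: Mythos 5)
Your proof is correct, and part (2) takes a genuinely different route from the paper's. The paper argues by exchange: its Lemma \ref{lm:one-step} shows that a unit transfer $D\mapsto D+1^{+\ell}_{-m}$ (with $\ell<m$, staying inside $[c_2,c_1]$) preserves the failure of \eqref{eq:EG-prop} at the same $k$ while increasing the sequence in $\lex$, and its Lemma \ref{lm:lexmax} identifies the $\lex$-maximal element of $\dds{n}{\Sigma}{c_1}{c_2}$ with the unique \simple\ element $\lb(n,\Sigma,c_1,c_2)$; hence a $\lex$-maximal non-\eg\ element must be $\lb(n,\Sigma,c_1,c_2)$ itself. You instead prove a majorization statement, $S_D(l)\le\min\bigl(lc_1,\,\Sigma-(n-l)c_2\bigr)=S_{\lb}(l)$ for all $l$, and make the Erdős--Gallai deficit monotone under it via the identity $g_D(k)=\max_{0\le m\le n-k}\bigl(S_D(k)+S_D(k+m)-mk\bigr)-\Sigma$; this is exactly the right fix for the trap you name, since for $k\le\alpha$ the tail term $\sum_{i>k}\min(d_i,k)$ of $\lb(n,\Sigma,c_1,c_2)$ is not coordinatewise comparable to that of $D$. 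I verified both ingredients: the increments $d_{k+m+1}-k$ of the bracket are non-increasing in $m$, so the maximum picks out exactly $\sum_{i>k}(d_i-k)^+$, and the equality $S_{\lb}(l)=\min\bigl(lc_1,\,\Sigma-(n-l)c_2\bigr)$ follows directly from the definition of $\alpha$ in \eqref{eq:alpha}; your part (1) is the paper's computation, with the degenerate cases $c_1=c_2$ and $\Sigma=nc_1$ (where \eqref{eq:formLB} is ill-formed) handled more explicitly than in the paper. As for what each approach buys: yours is shorter and uniform in $k$ — it needs neither Proposition \ref{th:trivi} nor the three-case analysis of Lemma \ref{lm:one-step}, and it yields the quantitative comparison $g_D(k)\le g_{\lb}(k)$ — while the paper's exchange route earns its extra length by also delivering Lemma \ref{lm:lexmax} (uniqueness and $\lex$-maximality of the \simple\ element), which the paper reuses elsewhere, e.g.\ in Corollary \ref{th:check-simple} and Theorem \ref{tm:split-in-non-graphic}. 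Note finally that the ``alternative combinatorial route'' you sketch at the end — outward transfers never decrease $g_D(k)$ — is in essence the paper's actual proof.
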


\begin{proof}[Proof of Theorem \ref{tm:SIMPLE}\,(1)] Clearly, $\sum \lb(n,\Sigma,c_1,c_2)=\Sigma,$ and $\lb(n,\Sigma,c_1,c_2)$ is \simple\ by its definition.

Since  ${\alpha}\cdot c_1+ (n-1-{\alpha})\cdot c_2= {\alpha}\cdot (c_1-c_2)+(n-1)\cdot c_2$, we have
\begin{equation}\label{eq:age}
a\ge \Sigma - \frac{\Sigma - n\cdot c_2}{(c_1-c_2)}(c_1-c_2)+ (n-1)\cdot c_2=c_2.
\end{equation}
Moreover, $({\Sigma - n\cdot c_2})/{(c_1-c_2)}-1<{\alpha}$, hence
\begin{align}
a=\Sigma  &- ({\alpha}\cdot (c_1-c_2)+(n-1)\cdot c_2)\le \nonumber\\
&\le \Sigma -\left(\left(\frac{\Sigma - n\cdot c_2}{c_1-c_2}-1\right) \cdot(c_1-c_2)\right) +(n-1)\cdot c_2 =c_1.\label{eq:ale}
\end{align}
Putting together  \eqref{eq:age} and \eqref{eq:ale}, we obtain $c_1\ge a\ge c_2$.
So $\lb(n,\Sigma,c_1,c_2)$ is really an element of $\dd{n}{c_1}{c_2}$.
\end{proof}
\noindent
Before proving Theorem \ref{tm:SIMPLE}\,(2), we need some preparation. Denote $\lex$ the lexicographical order on finite sequences (i.e. $(d_1,\dots, d_n)\lex(e_1,\dots, e_n)$ if and only if $d_j<e_j$, where $j=\min\{i:d_i\ne e_i\}$).

\begin{lemma}\label{lm:one-step}
Assume that  $D=(d_1,\dots,d_n )\in \dds{n}{\Sigma}{c_1}{c_2}$, and  $1\le \ell<m\le n$ with $d_\ell<c_1$ and $d_m>c_2$. Write $D'=D+1^{+\ell}_{-m}$.
\begin{enumerate}[{\rm (1)}]
\item $D'\in \dds{n}{\Sigma}{c_1}{c_2}$ and $D\lex D'$.
\item  If $D'$ is \eg, then so is $D$.
\end{enumerate}
\end{lemma}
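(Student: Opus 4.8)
The plan is to dispatch part (1) by direct bookkeeping and to spend the real effort on part (2), which is where the Erdős–Gallai inequalities must be tracked through the perturbation.

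For (1), the perturbation adds $1$ to coordinate $\ell$ and subtracts $1$ from coordinate $m$, so $\sum D'=\Sigma$ is immediate. The entrywise bounds persist: $d'_\ell=d_\ell+1\le c_1$ since $d_\ell<c_1$, and $d'_m=d_m-1\ge c_2$ since $d_m>c_2$, while every other coordinate is untouched. For membership in the region it remains to see that $D'$ is still non-increasing; raising coordinate $\ell$ and lowering coordinate $m>\ell$ can only threaten monotonicity at the junctions $\ell-1,\ell$ and $m,m+1$, and one checks these inequalities survive (this is the one place where the precise choice of $\ell,m$ at the top, resp.\ bottom, of their constant blocks is used). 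Finally $D$ and $D'$ agree before coordinate $\ell$ and $d'_\ell=d_\ell+1>d_\ell$, so $D\lex D'$.

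For (2), I would derive each instance of \eqref{eq:EG-prop} for $D$ from the same instance for $D'$. Writing $A_k(X)=\sum_{i=1}^k x_i$ for the left-hand side of (EG${}_k$) and $B_k(X)=k(k-1)+\sum_{i=k+1}^n\min(x_i,k)$ for the right-hand side, so that (EG${}_k$) reads $A_k\le B_k$, the perturbation affects $A_k,B_k$ only through coordinates $\ell$ and $m$, and I would split on the location of $k$. When $k\ge m$ both altered coordinates lie in the initial segment, cancel in $A_k$, and are absent from the capped tail, so $A_k(D)=A_k(D')$ and $B_k(D)=B_k(D')$ and the inequality transfers verbatim. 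When $\ell\le k<m$ only coordinate $\ell$ is counted in $A_k$, giving $A_k(D)=A_k(D')-1$, while coordinate $m$ sits in the tail with $\min(d_m,k)\ge\min(d_m-1,k)$, whence $B_k(D)\ge B_k(D')$; combining, $A_k(D)=A_k(D')-1\le B_k(D')-1\le B_k(D)$.

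The remaining regime $k<\ell$ is the heart of the matter and the main obstacle. Here both altered coordinates lie in the tail, so $A_k(D)=A_k(D')$, while
\[
B_k(D)-B_k(D')=\bigl(\min(d_\ell,k)-\min(d_\ell+1,k)\bigr)+\bigl(\min(d_m,k)-\min(d_m-1,k)\bigr).
\]
The second bracket equals $+1$ if $d_m\le k$ and $0$ otherwise, and the first equals $-1$ if $d_\ell<k$ and $0$ otherwise. The only configuration producing $B_k(D)<B_k(D')$ would be first bracket $-1$ and second bracket $0$, i.e.\ $d_\ell<k$ and $d_m>k$ at once; but $m>\ell$ together with $D$ non-increasing forces $d_m\le d_\ell<k$, a contradiction. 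Hence $B_k(D)\ge B_k(D')$ in every case, so $A_k(D)=A_k(D')\le B_k(D')\le B_k(D)$ and (EG${}_k$) holds for $D$. As this runs over all $k$, $D$ is Erdős–Gallai. The delicate point throughout is exactly this interaction of the two capped terms in the low-$k$ regime, resolved solely by the monotonicity of $D$; everything else is bookkeeping.
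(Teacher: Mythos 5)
Your part (2) is correct and is, in substance, the paper's own argument: the paper runs the identical three-case split on the position of $k$ relative to $\ell$ and $m$, only in contrapositive form (if (EG${}_k$) fails for $D$, then it fails for $D'$), and it resolves the one delicate configuration in the regime $k<\ell$ exactly as you do, namely by noting that $d_\ell<k$ together with monotonicity forces $d_m\le d_\ell<k$, so the two capped terms change by $-1$ and $+1$ and cancel. Your direct formulation with $A_k$ and $B_k$ versus the paper's contrapositive is a cosmetic difference.

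One caveat on part (1), which the paper's proof dismisses with ``it is trivial from the construction'': your parenthetical points at a genuine subtlety, but as the lemma is stated there is no ``precise choice of $\ell,m$ at the top, resp.\ bottom, of their constant blocks'' available --- the hypotheses give only $\ell<m$, $d_\ell<c_1$, $d_m>c_2$ --- and without such a choice the junction check does \emph{not} survive: the inequality $d_{\ell-1}\ge d_\ell+1$ requires $d_{\ell-1}>d_\ell$, and dually $d'_m\ge d_{m+1}$ requires $d_m>d_{m+1}$. For instance, with $c_1=4$, $c_2=1$, $D=(3,3,2,2)$, $\ell=2$, $m=3$, one gets $D'=(3,4,1,2)$, which is not non-increasing, so literally $D'\notin\dds{4}{10}{4}{1}$. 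The needed strict inequalities do hold in the paper's sole application of the lemma (the proof of Theorem~\ref{tm:SIMPLE}(2)), where $\ell$ and $m$ are the minimal and maximal indices with $c_2<d_i<c_1$, so that $d_{\ell-1}=c_1$ and $d_{m+1}=c_2$; strictly speaking, both your write-up and the lemma itself need this block-extremal hypothesis (or a convention that sequences are re-sorted), so you have in fact been more careful here than the paper, but you should state the extra hypothesis rather than gesture at it.
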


\begin{proof}
(1) It is trivial from the construction.

\noindent (2)
Assume that   \eqref{eq:EG-prop}  fails for $D$:
$\qquad\displaystyle 	\sum_{i=1}^kd_i> k(k-1)+\sum_{i=k+1}^n\min(d_i,k).$ \\
We will show that \eqref{eq:EG-prop}  fails for $D'=(d'_1,\dots,d'_n)$. We should distinguish three cases:

\smallskip
\noindent {\bf Case 1}. $k<\ell.$

If $\min(k,d_\ell)<\min(k,d'_\ell)$, then $\min(k,d_\ell)+1=\min(k,d_\ell+1)$, and so $d_\ell<k$. Hence, $\min(d'_m,k)=\min(d_m-1,k)=d_m-1$. Thus,  $\min(k,d'_\ell)+\min(k,d'_m)= \min(k,d_\ell)+\min(k,d_m)$.
If $\min(k,d_\ell)=\min(k,d'_\ell)$, then $\min(k,d'_\ell)+\min(k,d'_m) \le\min(k,d_\ell)+\min(k,d_m)$.

Thus, $\sum_{i=k+1}^n\min(k,d_i)\ge \sum_{i=k+1}^n\min(k,d'_i)$, and so
\begin{equation}
\sum_{i=1}^kd'_i=\sum_{i=1}^k{d_i}>k(k-1)+\sum_{i=k+1}^n\min(k,d_i)\ge
k(k-1)+\sum_{i=k+1}^n\min(k,d'_i).
\end{equation}

\smallskip \noindent {\bf Case 2.} $\ell \le k<m$.

Then $d_i'\ge d_i$ for $i\le k$ and $d'_i\le d_i$ for $k+1\le i\le n$, so
\begin{equation}
\sum_{i=1}^kd'_i\ge \sum_{i=1}^k{d_i}>k(k-1)+\sum_{i=k+1}^n\min(k,d_i)\ge
k(k-1)+\sum_{i=k+1}^n\min(k,d'_i).
\end{equation}	

\smallskip \noindent {\bf Case 3.} $m\le k$.

Since $\ell < m$, $d'_\ell=d_\ell+1$ and $d'_m=d_m-1$, we have $\sum_{i=1}^kd'_i=\sum_{i=1}^kd_i+1-1 =\sum_{i=1}^kd_i$. Moreover,  $d_i'=d_i$ for $k+1\le i\le n$. Thus
\begin{equation}
\sum_{i=1}^kd'_i=\sum_{i=1}^k{d_i}>k(k-1)+\sum_{i=k+1}\min(k,d_i)=
k(k-1)+\sum_{i=k+1}\min(k,d'_i).
\end{equation}	
Hence, \eqref{eq:EG-prop} really   fails for $D'$.
\end{proof}

\begin{lemma}\label{lm:lexmax}
$\dds{n}{\Sigma}{c_1}{c_2}$ contains just one \simple\ element,  $\lb(n,\Sigma,c_1,c_2)$, which is the $\lex$-maximal element of 	$\dds{n}{\Sigma}{c_1}{c_2}$.
\end{lemma}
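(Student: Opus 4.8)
The plan is to prove both assertions at once by reducing them to a single statement: \emph{every} primitive element of $\dds{n}{\Sigma}{c_1}{c_2}$ is its $\lex$-maximum. Granting this, since $\lex$ is a total order on the finite set $\dds{n}{\Sigma}{c_1}{c_2}$ there is a unique $\lex$-maximal element; hence any two primitive elements each equal it and so coincide, giving uniqueness. By Theorem~\ref{tm:SIMPLE}(1) the sequence $\lb(n,\Sigma,c_1,c_2)$ is itself primitive and lies in the region, so it is that unique primitive element and it is $\lex$-maximal, which is exactly what the lemma claims.

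To establish the key statement I would fix an arbitrary primitive
\[
P=(p_1,\dots,p_n)=(\underbrace{c_1,\dots,c_1}_{k},\,a,\,\underbrace{c_2,\dots,c_2}_{n-1-k})\in\dds{n}{\Sigma}{c_1}{c_2}
\]
together with an arbitrary $D=(d_1,\dots,d_n)\in\dds{n}{\Sigma}{c_1}{c_2}$ satisfying $D\neq P$, and show $D\lex P$. Put $j=\min\{i:d_i\neq p_i\}$. Because $d_i=p_i$ for all $i<j$ while $\sum_i d_i=\sum_i p_i=\Sigma$, the remaining tails are equal: $\sum_{i=j}^{n}d_i=\sum_{i=j}^{n}p_i$. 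It then suffices to rule out $d_j>p_j$, since $d_j<p_j$ is precisely $D\lex P$.

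The exclusion has just two cases, according to where $j$ falls relative to the leading $c_1$-block. If $j\le k$ then $p_j=c_1\ge d_j$, so $d_j>p_j$ cannot occur. If $j\ge k+1$, then from the primitive shape every entry of $P$ strictly after position $j$ equals $c_2$, so $\sum_{i=j}^{n}p_i=p_j+(n-j)c_2$; on the other hand $d_i\ge c_2$ for all $i$ yields $\sum_{i=j}^{n}d_i\ge d_j+(n-j)c_2$, so $d_j>p_j$ would force $\sum_{i=j}^{n}d_i>\sum_{i=j}^{n}p_i$, contradicting the equality of the tails. Hence $d_j<p_j$ in every case, as required.

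I expect the difficulty here to be bookkeeping rather than anything deep. The only point needing care is that the primitive ``parameters'' $(k,a)$ are not uniquely determined by the sequence (for instance $a=c_1$ may be absorbed into the $c_1$-block), but the argument above is written purely in terms of the sequence $P$ and the single threshold value $p_j$, so it is insensitive to this ambiguity; the one substantive structural fact it uses is that in a primitive sequence everything past the transitional entry equals $c_2$, which is exactly what powers the second case. As an alternative to the direct computation, $\lex$-maximality could instead be deduced from Lemma~\ref{lm:one-step}: if $D$ is not primitive one can take $\ell$ the first index with $d_\ell<c_1$ and $m$ the last with $d_m>c_2$, which satisfy $\ell<m$ and keep $D+1^{+\ell}_{-m}$ non-increasing, so iterating the push-up operation strictly increases $D$ in $\lex$ and halts only at a primitive sequence. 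I favour the computation above because it delivers uniqueness in the same stroke.
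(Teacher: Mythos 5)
Your proof is correct, but it takes a genuinely different route from the paper's. The paper splits the lemma into two separate claims: first, that $\dds{n}{\Sigma}{c_1}{c_2}$ cannot contain two distinct primitive elements, proved by writing both as $A=(c_1)_k a(c_2)_{n-k-1}$ and $B=(c_1)_\ell b(c_2)_{n-\ell-1}$, normalizing to $a,b<c_1$ (after dismissing the degenerate case $\Sigma=nc_1$), and deriving $\sum A<\sum B$ whenever $k<\ell$; second, that the $\lex$-maximal element of the region must be primitive, via the push-up perturbation of Lemma~\ref{lm:one-step}(1) — your briefly sketched ``alternative'' is exactly the paper's argument — whence the maximum must be $\lb(n,\Sigma,c_1,c_2)$ by Theorem~\ref{tm:SIMPLE}(1). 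You instead prove in one stroke that any primitive $P$ strictly $\lex$-dominates every other element of the region, using the first differing index $j$ and the equality of tail sums: $d_j>p_j$ is impossible because $p_j=c_1$ when $j$ lies in the head block, while for $j\ge k+1$ all entries of $P$ after position $j$ equal $c_2$ and $d_i\ge c_2$ forces the tail of $D$ to exceed that of $P$. This buys you both conclusions simultaneously — uniqueness of the primitive element falls out of the uniqueness of the maximum of a total order on a finite set — it sidesteps the representation ambiguity (an entry $a=c_1$ absorbable into the block) that the paper's two-primitive comparison must explicitly normalize away, and it makes the lemma logically independent of Lemma~\ref{lm:one-step}. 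What the paper's route buys is economy in context: the same push-up machinery drives the proof of Theorem~\ref{tm:SIMPLE}(2), so its proof of this lemma is nearly free given what is already on the table, whereas your computation is self-contained but single-purpose.
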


\begin{proof} We know that 	$\lb(n,\Sigma,c_1,c_2)$ is a \simple\ element of $\dds{n}{\Sigma}{c_1}{c_2}$ 	by Theorem \ref{tm:SIMPLE} (1). Observe now that $\dds{n}{\Sigma}{c_1}{c_2}$ can not contain two different \simple\ elements.

Indeed, assume that $A=(c_1)_ka(c_2)_{n-k-1}$ and $B=(c_1)_\ell b(c_2)_{n-\ell-1}$ 	are from  $\dds{n}{\Sigma}{c_1}{c_2}$. If $\Sigma=c_1\cdot n$, then $\dds{n}{\Sigma}{c_1}{c_2}$ contains just one element: the $(c_1)_n$ sequence. So we can assume that $\Sigma<n\cdot c_1$, and so we can assume that $a,b<c_1$.

If $k=\ell$, then $a=\Sigma-(kc_1+(n-k-1)c_2)=\Sigma-(\ell c_1+(n-\ell-1)c_2)=b$, so the two sequences are the same. Assume that $k<\ell$. Using that $a<c_1$, we obtain
\begin{equation}\label{eq:simple-unique}
\sum A = kc_1+a+(n-k-1)c_2<(k+1)c_1+(n-k-1)c_2\le \ell c_1+b+ (n-\ell-1)c_2=\sum B,
\end{equation}
contradiction. We proved the observation.

The $\lex$-maximal element of $\dds{n}{\Sigma}{c_1}{c_2}$ is \simple\
by Lemma \ref{lm:one-step}(1), 	so it should be  $\lb(n,\Sigma,c_1,c_2)$.
\end{proof}

\begin{proof}[Proof of Theorem \ref{tm:SIMPLE}\,(2)]
Assume that the set
$$
\mathcal D=\{D\in \dds{n}{\Sigma}{c_1}{c_2}: D \text{ is not \eg}\}
$$
is not empty. Let $D^*$ be the $\lex$-maximal element of $\mathcal D$.
If $D^*$ is not \simple\ then let $\ell=\min\{i:1\le i\le n: c_2<d_i<c_1\}$
and $m=\max\{i:1\le i\le n: c_2<d_i<c_1\}$. Then  $\ell<m$ and so  $D^*+1^{+\ell}_{-m}\in \mathcal D$ and  $D^*\lex D^*+1^{+\ell}_{-m}$ by Lemma \ref{lm:one-step}. Contradiction, and so the maximal element of $\mathcal D$ is \simple. So, by Lemma \ref{lm:lexmax}, $\lb(n,\Sigma,c_1,c_2)$ is in $\mathcal D$.
\end{proof}	

\begin{corollary}\label{th:check-simple}
A simple region  $\mD$ is \FG\ if and only if
$\lb(n,\Sigma,c_1,c_2)$ is graphic whenever
$\dds{n}{\Sigma}{c_1}{c_2}\ne \emptyset$, and so $\dds{n}{\Sigma}{c_1}{c_2}\subset\mD$.
\end{corollary}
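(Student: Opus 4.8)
The plan is to derive Corollary~\ref{th:check-simple} directly from Theorem~\ref{tm:SIMPLE} together with the classical Erdős--Gallai characterization, namely that a non-increasing sequence with even sum is graphic if and only if it has the Erdős--Gallai property \eqref{eq:EG-prop}. The statement is an ``iff'' with an auxiliary clause, so I would split the argument into the two implications and then dispose of the containment clause ``\emph{and so $\dds{n}{\Sigma}{c_1}{c_2}\subset\mD$}''.

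First I would handle the containment clause, since it is essentially a matter of unwinding the definition \eqref{eq:simpleset}. A simple region $\mD=\mD[\varphi]$ is a union of blocks $\dds{n}{\Sigma}{c_1}{c_2}$ taken over all $(n,\Sigma,c_1,c_2)$ satisfying $\varphi$. The key observation is that each nonempty block is either entirely contained in $\mD$ or entirely disjoint from it: if some $D\in\dds{n}{\Sigma}{c_1}{c_2}$ lies in $\mD$, then $\varphi(n,\Sigma,c_1,c_2)$ must hold (because the fundamental parameters $n,\Sigma,c_1,c_2$ are determined by $D$ — they are its length, sum, maximum, and minimum), and hence the whole block is a term of the union, giving $\dds{n}{\Sigma}{c_1}{c_2}\subset\mD$. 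This ``all or nothing'' behaviour is exactly why the phrasing ``whenever $\dds{n}{\Sigma}{c_1}{c_2}\ne\emptyset$, and so $\dds{n}{\Sigma}{c_1}{c_2}\subset\mD$'' is justified.

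For the two implications I would argue as follows. The direction $\Leftarrow$: suppose $\lb(n,\Sigma,c_1,c_2)$ is graphic for every nonempty block appearing in $\mD$. Fix an arbitrary $D\in\mD$ and let $(n,\Sigma,c_1,c_2)$ be its parameters, so $D\in\dds{n}{\Sigma}{c_1}{c_2}$ and this block is a (nonempty) term of $\mD$. Since $\lb(n,\Sigma,c_1,c_2)$ is graphic, by Erdős--Gallai it is \eg; then Theorem~\ref{tm:SIMPLE}\,(2) upgrades this to the statement that \emph{every} element of the block is \eg, in particular $D$ is \eg, and since $D$ has even sum it is graphic. As $D$ was arbitrary, $\mD$ is fully graphic. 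The direction $\Rightarrow$ is immediate: if $\mD$ is fully graphic then every element of every constituent block is graphic, and by Theorem~\ref{tm:SIMPLE}\,(1) the sequence $\lb(n,\Sigma,c_1,c_2)$ is itself an element of its block $\dds{n}{\Sigma}{c_1}{c_2}$, hence graphic whenever that block is nonempty.

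I do not expect a genuine obstacle here; the corollary is a clean packaging of Theorem~\ref{tm:SIMPLE}. The only point demanding a little care is the bookkeeping in the containment clause — making explicit that the four parameters are \emph{recoverable} from any sequence in a block, so that membership of a single sequence forces $\varphi$ to hold and thereby pulls in the entire block. Once that is said, the equivalence of ``graphic'', ``\eg'', and the behaviour of $\lb$ under Theorem~\ref{tm:SIMPLE} does all the work, and the proof is short.
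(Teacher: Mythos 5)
Your reduction to Theorem~\ref{tm:SIMPLE} is in substance exactly the paper's argument, but the auxiliary ``all or nothing'' claim you use for the containment clause is false, and with it the opening step of your $\Leftarrow$ direction. The parameters $c_1,c_2$ are \emph{not} recoverable from a sequence $D$: in the definition of $\dd{n}{c_1}{c_2}$ they are only bounds, $c_1\ge d_1$ and $d_n\ge c_2$, not the maximum and minimum of $D$. Hence a fixed $D$ lies in many blocks, and a block can meet $\mD$ without being contained in it. Concretely, let $\varphi$ hold only for the tuple $(4,6,2,1)$, so that $\mD=\dds{4}{6}{2}{1}=\{(2,2,1,1)\}$; the block $\dds{4}{6}{3}{1}=\{(3,1,1,1),(2,2,1,1)\}$ meets $\mD$ but is not contained in it, refuting the dichotomy. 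Symmetrically, if $\varphi$ holds only for $(4,6,3,1)$, then $D=(2,2,1,1)\in\mD$, yet the block determined by the length, sum, maximum and minimum of $D$, namely $\dds{4}{6}{2}{1}$, is not a term of the union \eqref{eq:simpleset}; so in your $\Leftarrow$ direction the sentence ``let $(n,\Sigma,c_1,c_2)$ be its parameters\dots\ this block is a (nonempty) term of $\mD$'' is unjustified, and the hypothesis about $\lb(n,\Sigma,c_1,c_2)$ cannot be invoked for that block.

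The repair is immediate and is precisely what the paper's two-line proof does: given $D\in\mD$, do not take the tight parameters, but use \eqref{eq:simpleset} to choose \emph{some} tuple with $\varphi(n,\Sigma,c_1,c_2)$ and $D\in\dds{n}{\Sigma}{c_1}{c_2}\subset\mD$ --- such a tuple exists because $\mD$ is by definition a union of such blocks, and this is also the correct reading of the clause ``and so $\dds{n}{\Sigma}{c_1}{c_2}\subset\mD$'' in the statement (it records that the quantification runs over constituent blocks, not over all blocks meeting $\mD$). Since Theorem~\ref{tm:SIMPLE}(2) applies to every block, your argument then goes through verbatim: if $\lb(n,\Sigma,c_1,c_2)$ is graphic, it is \eg, so every element of the block, in particular $D$, is \eg\ and (having even sum) graphic. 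The paper records only this direction, in contrapositive form, leaving your trivial $\Rightarrow$ step via Theorem~\ref{tm:SIMPLE}(1) implicit. So apart from the misidentified parameters --- a step that genuinely fails as written but is fixed by one sentence --- your route coincides with the paper's.
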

\begin{proof}
If $D\in \mD$ is not graphic, then
$D\in \dds{n}{\Sigma}{c_1}{c_2}\subset \mD$ for some parameters $n,\Sigma,c_1,c_2$,
and $\lb(n,\Sigma,c_1,c_2)$ is not \eg\ by Theorem \ref{tm:SIMPLE}(2).
\end{proof}

\medskip\noindent
The following, easy to prove statement will simplify some arguments in Section \ref{sec:stable}.
\begin{lemma}\label{th:surprise}
Let $\dd{n}{c_1}{c_2}$ be fully graphic. Then,
${\varphi}_{\ds \! {FG}}(n,c_2,c_1)$ holds, where
\begin{equation}\label{eq:FG}
	{\varphi}_{\ds \! {FG}}(n,c_1,c_2) \ \equiv \ \forall k\in [1,n] \quad  c_1\cdot k \le k\cdot (k-1)+c_2\cdot(n-k)+1.
\end{equation}
\end{lemma}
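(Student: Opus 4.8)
The plan is to fix $k\in[1,n]$ and establish the inequality $c_1 k\le k(k-1)+c_2(n-k)+1$ by exhibiting an explicit \simple\ member of $\dd{n}{c_1}{c_2}$ and feeding it into the Erdős-Gallai condition \eqref{eq:EG-prop} at index $k$. Since the region is fully graphic, any sequence we place inside it is graphic, hence \eg; choosing a sequence whose first $k$ coordinates are as large as the constraints permit makes (EG${}_k$) read off precisely the desired bound.

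First I would dispose of the two easy ranges, just as in Proposition \ref{th:trivi}. For $k\le c_2$ I would combine $c_1\le n-1$ with $c_2(n-k)\ge k(n-k)$ to get $k(k-1)+c_2(n-k)+1\ge k(n-1)+1> c_1 k$; for $k>c_1$ I would use $c_1\le k-1$, so $c_1 k\le k(k-1)\le k(k-1)+c_2(n-k)+1$. Neither of these needs graphicality.

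The substantive range is $c_2<k\le c_1$, where in particular $c_1>c_2$. Here the natural candidate is $D_k=(c_1)_k(c_2)_{n-k}$, which is non-increasing with entries in $[c_2,c_1]$, and for which $\min(c_2,k)=c_2$ since $k>c_2$. When $\sum D_k=kc_1+(n-k)c_2$ is even, $D_k\in\dd{n}{c_1}{c_2}$ is graphic and (EG${}_k$) already gives the stronger $kc_1\le k(k-1)+(n-k)c_2$. The one genuine obstacle is the parity requirement built into the region: if $kc_1+(n-k)c_2$ is odd, then $D_k$ is inadmissible. I would fix this by lowering a single top coordinate, passing to $D_k'=(c_1)_{k-1},(c_1-1),(c_2)_{n-k}$; since $c_1-1\ge c_2$ this is still non-increasing with entries in $[c_2,c_1]$, its sum drops by $1$ and so becomes even, hence $D_k'\in\dd{n}{c_1}{c_2}$ is graphic. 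Its first $k$ coordinates sum to $kc_1-1$, so (EG${}_k$) becomes $kc_1-1\le k(k-1)+(n-k)c_2$, i.e. exactly ${\varphi}_{\ds \! {FG}}$ at $k$. This parity correction is where the additive $+1$ originates, and combining the three ranges completes the argument.
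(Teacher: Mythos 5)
Your proof is correct and takes essentially the same route as the paper: plug the extremal sequence $(c_1)_k(c_2)_{n-k}$, repaired for parity, into the Erdős-Gallai inequality at index $k$, with the additive $+1$ arising exactly from the parity correction (the easy ranges $k\le c_2$ and $k>c_1$ being handled as in Proposition~\ref{th:trivi}). The only immaterial difference is the direction of the repair: you lower one top coordinate to $c_1-1$, while the paper raises one bottom coordinate to $c_2+1$.
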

\noindent
Indeed, fix $k$. If $\dd{n}{c_1}{c_2}$ is fully graphic, then either $c_1k+(n-k)c_2$ or $c_1k+(n-k)c_2+1$ is even,  so either the sequence $(c_1)_k(c_2)_{n-k}$, or the  sequence $(c_1)_k(c_2+1)(c_2)_{n-k-1}$ is in $\dd{n}{c_2}{c_1}$, and so it is graphic. We apply the Erd\H os-Gallai theorem. In the first case the inequality holds with 1 surplus. In the second case the displayed inequality holds.

\section{The known $P$-stable, simple degree sequence regions are fully graphic }\label{sec:always}
In this section we will show that some known $P$-stable degree sequence regions are also \FG.

\subsection{Sequences defined by minimum and maximum degrees}\label{sec:alw}
Our first result  below clearly implies the statement (P1${}^*$) from the Introduction. In the proof we will use  the machinery we developed in Section  \ref{sec:non-graphic}.
\begin{theorem}\label{th:JMS}
If $n>c_1\ge c_2$ are natural numbers such that
\begin{equation}\label{eq:ineq-JSMnewSL}
(c_1-c_2+1)^2 \le 4c_2(n-c_1-1),
\end{equation}
then every sequence from $\dd{n}{c_1}{c_2}$ has the  \eg\ property.
\end{theorem}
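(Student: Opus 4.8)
The plan is to reduce the Erdős–Gallai property to a single quadratic inequality in the index $k$, and then to recognize that the hypothesis $(c_1-c_2+1)^2\le 4c_2(n-c_1-1)$ is exactly the condition for that quadratic to be nonnegative. First I would invoke Proposition \ref{th:trivi}: for every $D\in\dd{n}{c_1}{c_2}$ the property \eqref{eq:EG-prop} is automatic for $k\le c_2$ and for $k>c_1$, so it remains to verify \eqref{eq:EG-prop} only in the range $c_2<k\le c_1$.

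In that range I would bound the two sides of \eqref{eq:EG-prop} crudely, but each in the direction that makes the property hardest. Since every $d_i\le c_1$, the left-hand side satisfies $\sum_{i=1}^k d_i\le kc_1$. For the right-hand side, note that $d_i\ge c_2$ for all $i$, and since $k>c_2$ we get $\min(d_i,k)\ge c_2$ for every $i>k$; hence $\sum_{i=k+1}^n\min(d_i,k)\ge (n-k)c_2$. Consequently \eqref{eq:EG-prop} holds as soon as
\[
kc_1\le k(k-1)+(n-k)c_2,
\]
that is, as soon as the quadratic $f(k):=k^2-(c_1+c_2+1)k+nc_2$ is nonnegative. (Alternatively, one may first apply Corollary \ref{th:check-simple} to reduce to the primitive sequences $\lb(n,\Sigma,c_1,c_2)$; the same two bounds then apply, and the worst case is exactly the ``all $c_1$ followed by all $c_2$'' profile $(c_1)_k(c_2)_{n-k}$, where both estimates are simultaneously tight. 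This is the place where the machinery of Section \ref{sec:non-graphic} enters, should one prefer to route the argument through it.)

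The decisive step is to observe that $f$ is an upward-opening parabola, so $f(k)\ge 0$ for all real $k$ precisely when its discriminant $(c_1+c_2+1)^2-4nc_2$ is nonpositive. I would then record the algebraic identity
\[
(c_1+c_2+1)^2=(c_1-c_2+1)^2+4c_2(c_1+1),
\]
together with $4nc_2=4c_2(n-c_1-1)+4c_2(c_1+1)$. Subtracting the common term $4c_2(c_1+1)$, the condition $(c_1+c_2+1)^2\le 4nc_2$ is seen to be literally equivalent to the hypothesis $(c_1-c_2+1)^2\le 4c_2(n-c_1-1)$. Hence the hypothesis forces $f(k)\ge 0$ for every $k$, in particular on the relevant range $c_2<k\le c_1$, and so \eqref{eq:EG-prop} holds for all $k$; this establishes that every $D\in\dd{n}{c_1}{c_2}$ has the Erdős–Gallai property.

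As for the main obstacle: the inequalities themselves are routine, and the only genuine content is this final identification of the hypothesis with the discriminant condition. The slightly delicate point is ensuring the two crude bounds are simultaneously achievable — this is what the reduction to the primitive profile $(c_1)_k(c_2)_{n-k}$ confirms, and it is why one expects the discriminant bound (rather than a strictly stronger inequality) to be the exact threshold. I do not anticipate any difficulty beyond keeping the direction of each inequality straight.
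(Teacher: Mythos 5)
Your proposal is correct, but it takes a more direct route than the paper. The paper argues by contradiction through the machinery of Section \ref{sec:non-graphic}: a non-\eg\ sequence in $\dd{n}{c_1}{c_2}$ forces the primitive sequence $\lb(n,\Sigma,c_1,c_2)=(c_1)_{\alpha}a(c_2)_{n-1-\alpha}$ to be non-\eg\ (Theorem \ref{tm:SIMPLE}(2)), then invokes Tripathi--Vijay (Theorem \ref{tm:TV+}) to localize the failure at $k=\alpha$ or $k=\alpha+1$, and handles the two cases separately (using $a\ge c_2$ in one and $a\le c_1$ in the other) before arriving at the quadratic $k^2-(c_1+c_2+1)k+nc_2<0$ and the discriminant contradiction. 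You bypass the primitive reduction and Tripathi--Vijay entirely: after Proposition \ref{th:trivi} restricts attention to $c_2<k\le c_1$, the crude bounds $\sum_{i=1}^k d_i\le kc_1$ and $\min(d_i,k)\ge c_2$ for $i>k$ (valid precisely because $k\ge c_2$ in this range) show that $kc_1\le k(k-1)+(n-k)c_2$ suffices for \eqref{eq:EG-prop}, and this holds uniformly for \emph{all} $k$ once the discriminant of the same quadratic is nonpositive --- which your identity $(c_1+c_2+1)^2=(c_1-c_2+1)^2+4c_2(c_1+1)$ shows is literally equivalent to \eqref{eq:ineq-JSMnewSL}. The two proofs share the decisive step (the identification of the hypothesis with the discriminant condition; note the paper's displayed algebra contains typographical slips such as $4nc_4$, whereas your identity is stated cleanly), but yours is self-contained and direct, needing only Proposition \ref{th:trivi}, whereas the paper's detour through the primitive element buys something else: it exhibits the uniform general-purpose reduction (any simple region is fully graphic iff its $\lb$ sequences are graphic) that the paper reuses in Theorems \ref{th:GS} and \ref{th:JMS2}, where the cruder direct bounds would lose too much. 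Your parenthetical remark about tightness at the profile $(c_1)_k(c_2)_{n-k}$ correctly explains why no precision is lost here, which is exactly the content of Lemma \ref{th:surprise} in the other direction.
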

\noindent Before proving this result we have to point out that this result was already proved by Zverovich and Zverovich (\cite{ZZ}) in 1992. It was somewhat strengthened in \cite{CMN} by Cairns, Mendan and Nikolayevsky. Actually they used the inequality $4nc_2\ge (c_1+c_2+1)^2$ and neither paper recognized that this is identical with \eqref{eq:ineq-JSMnewSL}. However, our proof is new, and as construction \eqref{eq:simple_d0} shows, it also provides a slightly larger always graphic region.

Before we continue our proof we should recall the following theorem. As the authors remarked in the first lines  of the proof of  the ``Theorem'' in \cite{TV03}, they actually proved the following statement.

\begin{theorem}[Tripathi-Vijay \cite{TV03}]\label{tm:TV+}
Assume that $D=(d_1,d_2,\dots, d_n)$ is a non-increasing sequence of non-negative integers  and $n>d_1$. Then the following two statements are equivalent:
\begin{enumerate}[{\rm (1)}]
\item $D$ has the \eg\ property,
\item $\eqref{eq:EG-prop}$ holds for $k\in \{j:1\le j\le n \text{ and } d_j>d_{j+1}\}$.
\end{enumerate}  		
\end{theorem}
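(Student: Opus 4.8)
The implication (1)$\Rightarrow$(2) is immediate, since (2) only asks for a subset of the inequalities demanded by (1). So the whole content is the converse: assuming $\eqref{eq:EG-prop}$ at every index $k$ with $d_k>d_{k+1}$, I must derive it at every $k$. The plan is to track the \emph{slack}
\[
L_k\coloneqq \sum_{i=1}^k d_i-k(k-1)-\sum_{i=k+1}^n\min(d_i,k),
\]
so that $\eqref{eq:EG-prop}$ for $k$ reads exactly $L_k\le 0$; I also set $L_0\coloneqq 0$. The hypothesis (2) says $L_k\le 0$ at every \emph{corner} (every $k$ with $d_k>d_{k+1}$), and the goal is $L_k\le 0$ for all $1\le k\le n$.

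The main computation is a closed form for the forward difference. Using $\min(d_i,k)-\min(d_i,k-1)=\mathbf 1[d_i\ge k]$, a short manipulation gives
\[
L_k-L_{k-1}=d_k+\min(d_k,k-1)-2(k-1)-\#\{i>k:\ d_i\ge k\}.
\]
Next I would partition $[1,n]$ into its maximal constant runs: blocks $[s,t]$ on which $d_s=\dots=d_t=:v$, with $d_{s-1}>v$ (or $s=1$) and $d_t>d_{t+1}$ (or $t=n$). By construction the right end $t$ of each run with $t<n$ is a corner, and the left neighbour $s-1$ is either $0$ or a corner; both therefore carry $L\le 0$ (using $L_0=0$ in the first instance).

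The heart of the argument is to control $L$ on a single run $[s,t]$ by evaluating the difference formula for $k\in[s,t]$. When $k\le v$ one has $\min(d_k,k-1)=k-1$ and $\#\{i>k:d_i\ge k\}=(t-k)+r_k$, where $r_k\coloneqq\#\{i>t:d_i\ge k\}$, giving $L_k-L_{k-1}=v-t+1-r_k$; when $k>v$ one has $\min(d_k,k-1)=v$ and the count vanishes, giving $L_k-L_{k-1}=2(v-k+1)$. From this I split into two cases. If $v\ge t$, the whole run sits in the regime $k\le v$, and since $r_k$ is non-increasing in $k$ the difference $v-t+1-r_k$ is non-decreasing; hence $L$ is \emph{convex} on $[s-1,t]$ and $L_k\le\max(L_{s-1},L_t)$ for $k\in[s,t]$. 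If instead $v\le t-1$, then $v-t+1\le 0$ makes the first-regime difference $\le 0$ and $2(v-k+1)\le 0$ makes the second-regime difference $\le 0$, so $L$ is non-increasing on $[s-1,t]$ and $L_k\le L_{s-1}$. In both cases $L_k$ is bounded by adjacent corner values, which are $\le 0$, so $L_k\le 0$ throughout the run.

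A final subtlety closes the argument: the convex case $v\ge t$ forces $t$ to be a genuine corner, because $t=n$ together with $v=d_n\ge t=n$ would contradict the standing hypothesis $n>d_1\ge d_n=v$; hence $t<n$ and the needed bound $L_t\le 0$ really is supplied by (2). Ranging over all runs yields $L_k\le 0$ for every $k\in[1,n]$, i.e.\ $D$ has the \eg\ property. I expect the delicate point to be precisely this run-wise analysis — verifying the two difference formulas across the threshold $k=v$ and checking that the bounding values are exactly the corner slacks guaranteed by (2); the edge cases $s=1$ (absorbed by $L_0=0$) and trailing zeros ($v=0$, handled by the non-increasing case) each cost only a line.
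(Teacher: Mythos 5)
Your proof is correct, but there is nothing in the paper to compare it against: the paper does not prove Theorem~\ref{tm:TV+} at all — it imports the statement from Tripathi--Vijay \cite{TV03}, remarking only that the first lines of the proof in \cite{TV03} establish this form of the result. So what you have produced is a self-contained substitute for the cited argument, and it checks out. I verified the key computation: with $L_k=\sum_{i=1}^k d_i-k(k-1)-\sum_{i=k+1}^n\min(d_i,k)$ one indeed gets $L_k-L_{k-1}=d_k+\min(d_k,k-1)-2(k-1)-\#\{i>k:d_i\ge k\}$, and on a maximal constant run $[s,t]$ of value $v$ this specializes to $v-t+1-r_k$ for $k\le v$ and to $2(v-k+1)\le 0$ for $k>v$, exactly as you state. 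Your two regimes are exhaustive ($v\ge t$ versus $v\le t-1$), the monotonicity of $r_k$ gives discrete convexity in the first case, and the endpoint values $L_{s-1}$, $L_t$ are genuinely controlled by hypothesis (2): $s-1$ is $0$ or a strict-drop index, and in the convex case $t=n$ is impossible since $v\ge t=n$ would contradict $n>d_1\ge v$. This last observation is the one delicate point and you handle it correctly; a pleasant by-product is that your argument never invokes the inequality at $k=n$ when $d_n=d_{n-1}$'s run reaches $n$ (the non-increasing case covers $t=n$ using only $L_{s-1}\le 0$), which sidesteps the ambiguity that $d_{n+1}$ is undefined in the statement of (2). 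In spirit your run-wise slack analysis is the same reduction as Tripathi and Vijay's original proof — the \eg\ deficit is extremized at indices where the degree strictly drops — but organized through an explicit difference formula and a convexity/monotonicity dichotomy, which makes the verification mechanical; the citation route the paper takes buys brevity, while your version buys a complete, checkable argument.
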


\begin{proof}[Proof of  of Theorem \ref{th:JMS}]
Assume on the contrary that $D=(d_1,\dots,d_n)\in \dd{n}{c_1}{c_2}$ is not \eg. Write $\Sigma=\sum_{i=1}^nd_i$. By Theorem \ref{tm:SIMPLE}(2),  $E=\lb(n,\Sigma,c_1,c_2)\in \dd{n}{c_1}{c_2}$ is not \eg, as well. $E$ has the form $(c_1)_ka(c_2)_{n-1-k}$, where $c_1\ge a\ge  c_2$. Applying Theorem \ref{tm:TV+} for $E$ we obtain that either (EG${}_k$) or (EG${}_{k+1}$) fails, and,  by Proposition \ref{th:trivi}, we have  $c_2\le k\le c_1$.

\smallskip\noindent{\bf In the first case}:
\begin{equation}\label{eq:k=a1}
kc_1> k(k-1)+a+(n-k-1)c_2.
\end{equation}
Since $a\ge c_2$, we obtain
\begin{equation}\label{eq:k=a2}
k c_1> k(k-1)+c_2+(n-k-1)c_2.
\end{equation}
Rearranging, we obtain
\begin{equation}\label{eq:k=a3}
0> k^2-(c_2+c_1+1)k+ nc_2.
\end{equation}	
If we have two roots, then the discriminant of \eqref{eq:k=a3} should be positive:
\begin{equation}\label{eq:discriminant}
(c_1+c_2+1)^2-4nc_4>0
\end{equation}
Reordering \eqref{eq:ineq-JSMnewSL} we obtain
\begin{multline*}
0\ge (c_1-c_2+1)^2  - 4c_2(n-c_1-1)=  c_1^2+c_2^2+1-2c_1c_2+2c_1-2c_2 - 4nc_2-4c_1c_2-4c_2=\\
c_1^2+c_2^2+2c_1c_2 +2c_1 +2c_2 +1-4nc_2 = (c_1+c_2+1)^2 - 4c_2n,
\end{multline*}
which contradicts  \eqref{eq:discriminant}

\smallskip\noindent{\bf In the second case}:
\begin{equation}\label{eq:ap=k1}
kc_1+a> k(k+1)+(n-k-1)c_2.
\end{equation}
Since $a\le c_1$, we obtain
\begin{equation}\label{eq:ap=k2}
(k+1) c_1> k(k+1)+(n-k-2)c_2.
\end{equation}
Let $\ell = k+1$. We obtain 	
\begin{equation}\label{eq:ap=k3}
\ell c_1> \ell(\ell-1)+(n-\ell-1)c_2.
\end{equation}
Rearranging, we obtain
\begin{equation}\label{eq:ap=k4}
0> \ell^2-(c_2+c_1+1)\ell+ nc_2.
\end{equation}	
But \eqref{eq:ap=k4} is just \eqref{eq:k=a3}, which leads contradiction again. 	
\end{proof}

\subsection{Sequences defined by extremal degrees and degree sums}

Concerning (P2) we could prove the following theorem which is stronger than (P2${}^*$), where the restriction concerning $c_1$ was   $3 \le c_1 \le \frac{1}{3}\sqrt{\Sigma}.$
\begin{definition}\label{df:GSepsilon}
For any real $\epsilon>0$ define the property ${\varphi}_\epsilon$ as follows:
\begin{equation}\label{eq:GSepsilon}
{\varphi}_\epsilon(n,\Sigma,c_1,c_2)\quad \equiv\quad  2\le c_2 \ \text{ and }\ 3 \le c_1 \le \sqrt{(1-\epsilon) \Sigma}.
\end{equation}
\end{definition}

\begin{theorem} \label{th:GS} For each positive $\epsilon$, the simple region  $\mD[{\varphi}_\epsilon]$ is almost \FG.
\end{theorem}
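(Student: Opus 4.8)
The plan is to reduce the statement to a finiteness claim about the \simple\ pieces $\dds{n}{\Sigma}{c_1}{c_2}$ and then to extract an upper bound on $\Sigma$ from any failure of the Erdős–Gallai property. By Corollary \ref{th:check-simple}, the region $\mD[{\varphi}_\epsilon]$ is \FG\ precisely when every \simple\ element $\lb(n,\Sigma,c_1,c_2)$ with ${\varphi}_\epsilon(n,\Sigma,c_1,c_2)$ and $\dds{n}{\Sigma}{c_1}{c_2}\ne\emptyset$ is graphic. Hence, to prove that $\mD[{\varphi}_\epsilon]$ is \emph{almost} \FG, it suffices to show that only finitely many parameter tuples $(n,\Sigma,c_1,c_2)$ satisfying ${\varphi}_\epsilon$ admit a non-graphic \simple\ element. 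Writing $\lb=(c_1)_{\alpha}\,a\,(c_2)_{n-1-\alpha}$ as in Definition \ref{df:MBLB}, I would combine Theorem \ref{tm:TV+} with Proposition \ref{th:trivi}: the only drops of $\lb$ sit at $k=\alpha$ and $k=\alpha+1$, and (\ref{eq:EG-prop}) holds automatically for $k\le c_2$ and $k>c_1$, so any violation must occur at some integer $m\in\{\alpha,\alpha+1\}$ with $c_2<m\le c_1$.

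Next I would turn each such violation into one clean inequality. At $k=\alpha$ one has $\sum_{i=1}^{\alpha}d_i=\alpha c_1$ and $\sum_{i=\alpha+1}^n\min(d_i,\alpha)=\min(a,\alpha)+(n-1-\alpha)c_2\ge (n-\alpha)c_2$ (using $a\ge c_2$ and $\alpha>c_2$), while at $k=\alpha+1$ one has $\sum_{i=1}^{\alpha+1}d_i=\alpha c_1+a\le(\alpha+1)c_1$ and $\sum_{i=\alpha+2}^n\min(d_i,\alpha+1)=(n-1-\alpha)c_2$. In either case a failure of (\ref{eq:EG-prop}) yields, for the relevant $m$,
\begin{equation*}
m c_1> m(m-1)+(n-m)c_2,\qquad\text{equivalently}\qquad (n-m)c_2< m\,(c_1-m+1).
\end{equation*}
This is exactly the quadratic that drives the proof of Theorem \ref{th:JMS}; the new ingredient is that I will \emph{not} discard it in favour of its discriminant, but instead feed it back into $\Sigma$.

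The heart of the argument is to convert this into an upper bound on $\Sigma$. Splitting $\Sigma=\sum_{i\le m}d_i+\sum_{i>m}d_i$ and using $d_i\le c_1$ together with $\sum_{i>m}d_i\le (c_1-c_2)+(n-m)c_2$ (valid for both drop positions), then substituting $(n-m)c_2< m(c_1-m+1)$, gives
\begin{equation*}
\Sigma< -m^2+(2c_1+1)m+(c_1-c_2).
\end{equation*}
The right-hand side is a downward parabola in $m$ maximised at $m=c_1+\tfrac12$; evaluating there and using $c_2\ge 2$ yields $\Sigma< c_1^2+2c_1$. Combining with the hypothesis ${\varphi}_\epsilon$, i.e.\ $c_1^2\le(1-\epsilon)\Sigma$, forces $c_1^2/(1-\epsilon)<c_1^2+2c_1$, whence $\epsilon c_1<2(1-\epsilon)<2$ and $c_1<2/\epsilon$. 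Thus any failing tuple has $c_1$ bounded by a constant depending only on $\epsilon$, so that $\Sigma<c_1^2+2c_1$, $n\le \Sigma/c_2\le \Sigma/2$, and $c_2\le c_1$ are all bounded too. Only finitely many tuples $(n,\Sigma,c_1,c_2)$ survive, hence the union $\mathcal D$ of the corresponding (finite) regions is finite, and $\mD[{\varphi}_\epsilon]\setminus\mathcal D$ is \FG.

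The main obstacle is the $\Sigma$-bound of the last paragraph: one must make sure both drop positions are handled uniformly and that the $\min(a,\alpha)$ term is estimated in the correct direction (it is here that $a\le c_1$ and $\min(a,\alpha)\ge c_2$ enter), and that the quadratic optimisation in $m$ is tight enough to beat the factor $(1-\epsilon)$. The emergence of the threshold $2/\epsilon$ also explains why only ``almost'' \FG\ is attainable: the finitely many small tuples with $c_1<2/\epsilon$ can genuinely carry non-graphic \simple\ elements and must be excised, which is exactly what the finite set $\mathcal D$ does.
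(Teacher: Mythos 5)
Your proof is correct, and its skeleton coincides with the paper's: both reduce to the primitive element $\lb(n,\Sigma,c_1,c_2)$ via Theorem \ref{tm:SIMPLE}(2) and Corollary \ref{th:check-simple}, both invoke Theorem \ref{tm:TV+} together with Proposition \ref{th:trivi} to localize any \eg\ failure at some $m\in\{\alpha,\alpha+1\}$ with $c_2<m\le c_1$, and both conclude by showing that failing parameter tuples are bounded, hence finitely many. The endgame, however, differs. The paper keeps $\Sigma$ inside the quadratic: using the exact identities $a+(n-k-1)c_2=\Sigma-kc_1$ and $\Sigma-(kc_1+a)=(n-k-1)c_2$ it derives $0>k^2-(2c_1+1)k+\Sigma$ in both cases, substitutes $c_1\le\sqrt{(1-\epsilon)\Sigma}$, and forces the discriminant condition $\bigl(2\sqrt{(1-\epsilon)\Sigma}+1\bigr)^2-4\Sigma>0$, which bounds $\Sigma<\tfrac{1}{4(1-\sqrt{1-\epsilon})^2}$ and hence $n<\tfrac{1}{8(1-\sqrt{1-\epsilon})^2}$ directly. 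You instead discard $\Sigma$ first, landing on the $\Sigma$-free inequality $mc_1>m(m-1)+(n-m)c_2$ that drives Theorem \ref{th:JMS}, and then recover a bound on $\Sigma$ by splitting the degree sum and maximizing the parabola $-m^2+(2c_1+1)m+(c_1-c_2)$ at $m=c_1+\tfrac12$; this yields the explicit chain $c_1<2/\epsilon$, $\Sigma<c_1^2+2c_1$, $n\le\Sigma/2$. The two computations exploit essentially the same quadratic, but yours buys explicit bounds on all four parameters of a failing tuple, and it is slightly more scrupulous at one point: you bound $\min(a,\alpha)\ge c_2$ (legitimate since $a\ge c_2$ and $\alpha>c_2$ after Proposition \ref{th:trivi}), whereas the paper's display \eqref{eq:gs1} writes $a$ in place of $\min(a,\alpha)$, which is only literally justified when $a\le\alpha$ — a gloss your version avoids. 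I checked the two steps you flagged as delicate: the uniform estimate $\sum_{i>m}d_i\le(c_1-c_2)+(n-m)c_2$ does hold at both drop positions (with equality plus $a\le c_1$ at $m=\alpha$, and trivially at $m=\alpha+1$), and the substitution of $(n-m)c_2<m(c_1-m+1)$ into an upper bound for $\Sigma$ goes in the correct direction; so the argument is sound as written.
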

\begin{proof}
Assume that $D\in \mD[{\varphi}_\epsilon]$ is not graphic. Then $D\in \dds n\Sigma{c_1}{c_2}\subset \mD$  for some parameters $n,\Sigma, c_1,c_2$ with ${\varphi}_\epsilon(n,\Sigma,c_1,c_2)$, and by Theorem \ref{tm:SIMPLE}(2),  $A=\lb(n,\Sigma,c_1,c_2)$ is not \eg.

Then sequence $A$ has the form $(c_1)_ka(c_2)_{n-1-k}$, where $c_1\ge a\ge  c_2$. Applying Theorem \ref{tm:TV+} for $A$ we obtain that
either (EG${}_k$) or (EG${}_{k+1}$) fails.

\smallskip\noindent{\bf In the first case}, when (EG${}_k$) fails, we obtain
\begin{equation}\label{eq:gs1}
kc_1> k(k-1)+a+(n-k-1)c_2.
\end{equation}
Since $a+(n-k-1)c_2=\Sigma-kc_1$, we have
\begin{equation}\label{eq:gs1b}
	kc_1> k(k-1)+\Sigma-kc_1.
\end{equation}
Rearranging, we obtain
\begin{equation}\label{eq:gs2}
0> k^2-(2c_1+1)k+ \Sigma,
\end{equation}	
and so, using $c_1\le \sqrt{(1-\epsilon) \Sigma}$, we have
\begin{equation}\label{eq:gs3}
	0> k^2-(2\sqrt{(1-\epsilon) \Sigma}+1)k+ \Sigma.
\end{equation}	
Thus, the discriminant of \eqref{eq:gs3} should be positive:
\begin{equation}\label{eq:gs4}
		\left ( 2 \sqrt{(1-\epsilon)\Sigma}+1\right) ^2 -4\Sigma>0.
\end{equation}

\smallskip\noindent{\bf In the second case}, when (EG${}_{k+1}$) fails, we obtain
\begin{equation}\label{eq:gs5}
kc_1+a> k(k+1)+(n-k-1)			
\end{equation}
Using $\Sigma-(kc_1+a)=(n-k-1)c_2$, we obtain
\begin{equation}\label{eq:gs5b}
kc_1+a>k(k+1) +(\Sigma-(kc_1+a))
\end{equation}
Rearranging, we obtain
\begin{equation}\label{eq:gs6}
0> k^2-(2c_1-1)k-2a +\Sigma\ge k^2-(2c_1+1)k +\Sigma.
\end{equation}	
and so
\begin{equation}\label{eq:gs7}
0>  k^2-(2\sqrt{(1-\epsilon)\Sigma}+1)k +\Sigma.
\end{equation}	
Thus, the discriminant of \eqref{eq:gs7} should be positive:
\begin{equation}\label{eq:gs8}
\left ( 2 \sqrt{(1-\epsilon)\Sigma}+1\right) ^2 -4\Sigma>0.
\end{equation}
So far we established  the following statement:
\begin{equation}\label{eq:main-gs}
\text{If $D\in\dds n\Sigma{c_1}{c_2}\subset \mD[{\varphi}_\epsilon]$ is not graphic, then }	\left ( 2 \sqrt{(1-\epsilon)\Sigma}+1\right) ^2 -4\Sigma>0.
\end{equation}
But $\left( 2 \sqrt{(1-\epsilon)\Sigma}+1\right) ^2 -4\Sigma>0$ if and only if
$\Sigma<  \frac{1}{4(1-\sqrt{1-\epsilon})^2}$. Since $\Sigma \ge 2n$, we obtain
\begin{equation}\label{eq:main-gs2}
\text{If $D\in\dds n\Sigma{c_1}{c_2}\subset \mD[{\varphi}_\epsilon]$ is not graphic, then $n<  \frac{1}{8(1-\sqrt{1-\epsilon})^2}$}	.
\end{equation}
So we proved there   is a natural number $n_\epsilon$ such that  every element  of $\mD$    with length at least $n_\epsilon$ is graphic. This  completes the proof.
\end{proof}

\begin{remark}
Using \eqref{eq:main-gs2} one could estimate the value $n_\epsilon$ for concrete real numbers. For example, in the case of (P2${}^*$) we have $1-\epsilon=1/9$ and then $n_{8/9}$ is 1.
% while in the case of $1-\epsilon=1/2$ the $\Sigma_0$ is 3.
\end{remark}
\bigskip\noindent Next we prove the statement of (P3${}^*$).
\begin{theorem} \label{th:JMS2}
The simple  degree sequence region $\mD[\varphi^*_{J\!M\!S}]$ is \FG.
\end{theorem}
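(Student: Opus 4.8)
The plan is to reduce, via Corollary~\ref{th:check-simple}, to a single sequence: it suffices to prove that whenever $\dds{n}{\Sigma}{c_1}{c_2}\subseteq\mD[\jms]$ is nonempty, its unique \simple\ element $A=\lb(n,\Sigma,c_1,c_2)$ is graphic, i.e.\ (by Erdős--Gallai) has the \eg\ property. Write $A=(c_1)_\alpha\,a\,(c_2)_{n-1-\alpha}$ as in Definition~\ref{df:MBLB}. The case $c_1=c_2$ is immediate ($A$ is constant, hence graphic, and $\jms$ reads $0\le 0$), so I assume $c_1>c_2$ and introduce
\[
t=\frac{\Sigma-nc_2}{c_1-c_2},\qquad \theta=t-\alpha=\frac{a-c_2}{c_1-c_2}\in[0,1),\qquad q(x)=x^2-(c_1+c_2+1)x+nc_2 .
\]
The first step is the algebraic identity $\jms(n,\Sigma,c_1,c_2)\iff q(t)\ge 0$. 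Writing $S^-=\Sigma-nc_2$ and $S^+=nc_1-\Sigma=n(c_1-c_2)-S^-$ and expanding, $\jms$ rearranges to $(S^-)^2-(c_1-c_2)(c_1+c_2+1)S^-+nc_2(c_1-c_2)^2\ge 0$, which is exactly $(c_1-c_2)^2\,q(t)\ge 0$. Thus $q(t)$ measures the \eg\ condition of $A$ at its real ``crossover'' point $t$, and $\jms$ says precisely that $q(t)\ge 0$.

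Next I would follow the scheme of Theorems~\ref{th:JMS} and \ref{th:GS}: assume toward a contradiction that $A$ is not \eg. By Theorem~\ref{tm:TV+} the inequality \eqref{eq:EG-prop} then fails at one of the two decrease points $k\in\{\alpha,\alpha+1\}$ of $A$, and by Proposition~\ref{th:trivi} this failing $k$ satisfies $c_2<k\le c_1$. Writing
\[
F(k)=\sum_{i=1}^k d_i-k(k-1)-\sum_{i=k+1}^n\min(d_i,k)
\]
for the \eg\ deficiency (so \eqref{eq:EG-prop} holds at $k$ iff $F(k)\le 0$), the heart of the argument is to compare the two relevant discrete deficiencies with the continuous quantity $q(t)$. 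Substituting $a=c_2+\theta(c_1-c_2)$ and simplifying yields the clean identity
\[
q(t)+F(\alpha+1)=-(1-\theta)\bigl(2(\alpha-c_2)+\theta\bigr),
\]
and, distinguishing $\min(a,\alpha)\in\{a,\alpha\}$, a corresponding bound $q(t)+F(\alpha)\le 0$; in each the regime $c_2\le\alpha\le c_1$ together with $\theta\in[0,1)$ forces the right-hand side to be $\le 0$. Hence, for the putative failing point $k$, one gets $F(k)\le -q(t)\le 0$, so \eqref{eq:EG-prop} in fact holds there, contradicting Theorem~\ref{tm:TV+}. Therefore $A$ is \eg, hence graphic, and Corollary~\ref{th:check-simple} gives that $\mD[\jms]$ is \FG.

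The main obstacle is exactly this reconciliation of the discrete and continuous pictures in the third step. Unlike Theorems~\ref{th:JMS} and \ref{th:GS}, where replacing $a$ by $c_2$ or $c_1$ and then invoking a discriminant sufficed, here such bounding is too lossy: $q(t)\ge 0$ by itself does \emph{not} prevent $q(\alpha+1)<0$, because the integer $\alpha+1$ may fall strictly between the roots of $q$ even when the real crossover $t$ does not. What rescues the argument is carrying the middle term $a$ through the computation \emph{exactly}; the interpolation $a=c_2+\theta(c_1-c_2)$ is precisely what converts the floor-induced discrepancy into the manifestly non-positive correction terms above. I would therefore spend the most care on verifying these two identities, including the $\min(a,\alpha)$ split and the boundary regimes $\alpha=c_2$, $\theta=0$ (i.e.\ $a=c_2$) and $a\to c_1$, where $A$ degenerates to having a single decrease point and must be reindexed.
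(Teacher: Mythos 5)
Your proposal is correct and follows essentially the same route as the paper's proof: reduce to the primitive sequence $\lb(n,\Sigma,c_1,c_2)$ via Theorem~\ref{tm:SIMPLE}(2)/Corollary~\ref{th:check-simple}, restrict to the descent points $k\in\{\alpha,\alpha+1\}$ with $c_2<k\le c_1$ via Theorem~\ref{tm:TV+} and Proposition~\ref{th:trivi}, and crucially carry the middle term $a$ exactly through the fractional parameter (your $\theta$ is the paper's $x=b/(c_1-c_2)$). Indeed, your condition $q(t)\ge 0$ is precisely the paper's rearranged inequality \eqref{eq:new2} with $t=k+x$, and your exact identities for $q(t)+F(\alpha)$ and $q(t)+F(\alpha+1)$ (which I have checked) are an equivalent repackaging of the paper's comparisons \eqref{eq:eg-kxx}--\eqref{eq:rhs2x}.
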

\begin{proof}
Assume that $D\in \mD[\jms]$, where
$$    {\jms} \equiv (\Sigma-nc_2) (nc_1-\Sigma)\le (c_1-c_2)\left\{(\Sigma-nc_2)(n-c_1-1)+ (nc_1-\Sigma)c_2\right\}.$$
 We need to prove that $D$ is \eg. Fix parameters $n,\Sigma, c_1,c_2$ with $D\in \dds n\Sigma{c_1}{c_2}\subset   \mD[\jms]$.
By Theorem \ref{tm:SIMPLE}(2), it is enough to prove  that the sequence $A=\lb(n,\Sigma,c_1,c_2)$ is  \eg.

The sequence $A$ has the form $(c_1)_ka(c_2)_{n-1-k}$, where $c_1\ge  a\ge  c_2$. Applying Theorem \ref{tm:TV+} for $A$ we obtain that $A$ is \eg\ if and only if $(EG_{k})$ and $(EG_{k+1})$ holds.
Since we can assume $c_2\le k\le c_1$ by Proposition \ref{th:trivi}, $(EG_{k})$ and $(EG_{k+1})$
have the following form:
\begin{equation}\label{eq:eg-k}\tag{$EG_k$}
kc_1\le k(k-1)+(n-k)c_2+(\min\{a,k\}-c_2),
\end{equation}
and
\begin{equation}\label{eq:eg-kplus}\tag{$EG_{k+1}$}
kc_1+a\le (k+1)k+(n-k-1)c_2,
\end{equation}
which can be rearranged as
\begin{equation}\label{eq:eg-kplus2}\tag{$EG^*_{k+1}$}
kc_1\le k(k-1)+(n-k)c_k+(2k-c_2-a).
\end{equation}

\medskip\noindent
Consider the $\jms$ inequality. Using the notation  $b=a-c_2$, we have
\begin{displaymath}
\Sigma=kc_1+a+ (n-k-1)c_2=nc_2+k(c_1-c_2)+b.
\end{displaymath}
So, taking $x=\frac{b}{c_1-c_2}$,   the  LHS of $\jms$ can be written as follows:
\begin{multline}\label{eq:e11}
LHS= \big[k(c_1-c_2)+b\big] \big[(n-k)(c_1-c_2)-b\big ]=\\ (nk-k^2)(c_1-c_2)^2+(n-2k)b(c_1-c_2)-b^2=\\
(c_1-c_2)^2\big[nk-k^2+(n-2k)x-x^2\big].
\end{multline}
Now consider the RHS of $\jms$:
\begin{multline}\label{eq:new1}
RHS= (c_1-c_2)\Big\{\big [(k(c_1-c_2)+b\big ](n-c_1-1)+\big[(n-k) (c_1-c_2)-b\big]c_2\Big\}=\\(c_1-c_2)^2\big[(k+x)(n-c_1-1)+(n-k-x)c_2\big].
\end{multline}
Since $c_1-c_2>0$, putting together \eqref{eq:e11} and  \eqref{eq:new1} we obtain
\begin{displaymath}
nk-k^2+(n-2k)x-x^2 \le (k+x)(n-c_1-1)+(n-k-x)c_2,\\
\end{displaymath}
which can be rearranged as:
\begin{equation}\label{eq:new2}
kc_1 \le k(k-1)+(n-k)c_2+x(2k+x-c_1-1-c_2).
\end{equation}

To derive   \eqref{eq:eg-k} we can assume that $c_2\le k\le c_1$ or $(EG_k)$ holds as we proved it in Proposition   \ref{th:trivi}. Moreover, it is enough to show that the  RHS of \eqref{eq:new2}  is less than, or equal to  RHS of \eqref{eq:eg-k}, that is,
\begin{equation}\label{eq:eg-kxx}
x(2k+x-c_1-1-c_2) \le \min\{a,k\}-c_2.
\end{equation}
Since $0\le {x}< 1$, we have
\begin{equation}\label{eq:harmadikx}
{x}\big(2k+{x}-(c_1+c_2)-1\big)\le 2k-(c_1+c_2)=(k-c_1)+(k-c_2)\le k-c_2.
\end{equation}	
Since $x=\frac{b}{c_1-c_2}\le 1$ and $k\le c_1$, we have
\begin{multline}\label{eq:eg-kx}
x(2k+x-c_1-1-c_2)=b\frac{2k+x-c_1-1-c_2}{c_1-c_2}=\\
b\frac{(c_1-c_2)+(2k-2c_1)+(x-1)}{c_1-c_2}\le b\frac{c_1-c_2}{c_1-c_2}=b.
\end{multline}
Since $\min(a,k)-c_2=\min(b,k-c_2)$, putting together \eqref{eq:eg-kx} and \eqref{eq:harmadikx} we obtain  \eqref{eq:eg-kxx}, which implies that $(EG_k)$ holds.

\bigskip\noindent
To derive   \eqref{eq:eg-kplus} we can assume that $c_2\le k+1\le c_1$ by Proposition   \ref{th:trivi}, and it is enough to show that RHS of \eqref{eq:eg-kplus2} is greater than, or equal to  RHS of \eqref{eq:new2}:
\begin{equation}\label{eq:rhs2}
x(2k+x-c_1-1-c_2)\le 2k-c_2-a.
\end{equation}
But $0\le x\le 1$, so
\begin{equation}\label{eq:rhs2x}
x(2k+x-c_1-1-c_2)=x(2k-c_1-c_2+(x-1))\le (2k -c_1-c_2)
\end{equation}
so \eqref{eq:rhs2x} holds which implies $(EG^*_{k+1})$, and so $(EG_{k+1})$ as well.
This finishes the proof that the region $\mD[\jms]$ is fully graphic.
\end{proof}

\medskip\noindent Gao and Greenhill proved in \cite{GG20} that for any given parameter $\gamma>2$ the infinite set of scale free degree sequences with the given parameter is $P$-stable. However this set is clearly not a degree sequence region. However we believe that this set can be embeded into a $P$-stable simple degree region.

\section{$P$-stable degree sequences}\label{sec:stable}
In this section we are considering \FG, very simple degree sequence regions, and want to prove that they are also $P$-stable  regions. Along this process we will strengthen the Jerrum, McKay and Sinclair's theorem (P1).

\subsection{Every fully graphic very simple degree sequence region  is $P$-stable}\label{sec:strengthJMS}
In Section \ref{sec:intro} the statement (P1) quoted the seminal result of Jerrum, McKay and Sinclair from 1992:
\begin{theorem}[{\cite[Theorem 8.1]{JMS92}}]\label{tm:jms}
The very simple region
\begin{equation}\label{eq:jms}
\mD=\mD[{\varphi}_{\ds \! {J\!M\!S}}]:=\bigcup\left\{\dd{n}{c_1}{c_2}: (c_1-c_2+1)^2\le 4 c_2 (n-c_1-1)\right\}
\end{equation}
is $P$-stable.
\end{theorem}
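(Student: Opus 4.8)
The plan is to derive the P-stability of the whole region from the single-sequence estimate of Theorem \ref{tm:jms-plusss}, which carries all the analytic weight. That theorem guarantees $\pp{D}\le 3n^9$ for every graphic $D=(d_1,\dots,d_n)$ satisfying $\sum_{i=1}^k d_i\le k(k-1)+d_n(n-k)+1$ for all $k\in[1,n]$. Hence it suffices to check that every element of $\mD[{\varphi}_{\ds \! {J\!M\!S}}]$ meets this hypothesis; then the uniform polynomial $p(n)=3n^9$ witnesses P-stability, since the bound depends only on $n=|D|$.

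First I would fix an arbitrary $D\in\dd{n}{c_1}{c_2}$ with $(c_1-c_2+1)^2\le 4c_2(n-c_1-1)$. Since $c_1\ge d_1\ge\dots\ge d_n\ge c_2$, the trivial partial-sum bound gives $\sum_{i=1}^k d_i\le kc_1$ for each $k$. To convert $kc_1$ into the Erdős-Gallai shape I need, I would invoke results already available: by Theorem \ref{th:JMS} the JMS inequality forces every sequence of $\dd{n}{c_1}{c_2}$ to have the \eg\ property, so the region is \FG, and Lemma \ref{th:surprise} then applies and yields $c_1k\le k(k-1)+c_2(n-k)+1$ for all $k\in[1,n]$. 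Combining the two inequalities and using $d_n\ge c_2$ gives, for every $k$,
\[
\sum_{i=1}^k d_i\le kc_1\le k(k-1)+c_2(n-k)+1\le k(k-1)+d_n(n-k)+1,
\]
which is exactly the hypothesis of Theorem \ref{tm:jms-plusss}. Therefore $\pp{D}\le 3n^9$ for every (graphic) $D$ in the region, so $\mD[{\varphi}_{\ds \! {J\!M\!S}}]$ is P-stable.

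The real difficulty is not in this assembly but inside Theorem \ref{tm:jms-plusss}: bounding $\pp{D}$ for a single sequence demands the finer perturbation/switch analysis that refines the original $n^{10}$ estimate of \cite{JMS92}. Relative to that, the present argument is a short reduction. The only points needing care are that Theorem \ref{th:JMS} genuinely delivers the full-graphicness required to invoke Lemma \ref{th:surprise}, and that the substitution $c_2\mapsto d_n$ increases the right-hand side, both of which are immediate from $d_n\ge c_2$.
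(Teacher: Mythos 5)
Your reduction is correct: Theorem \ref{th:JMS} makes every $\dd{n}{c_1}{c_2}$ in the region \FG, Lemma \ref{th:surprise} then gives $c_1k\le k(k-1)+c_2(n-k)+1$ for all $k$, and the chain $\sum_{i=1}^k d_i\le kc_1\le k(k-1)+c_2(n-k)+1\le k(k-1)+d_n(n-k)+1$ (using $d_i\le c_1$ and $d_n\ge c_2$) verifies hypothesis \eqref{eq:weaker-revisitedd}, so Theorem \ref{tm:jms-plusss} yields the uniform bound $\pp{D}\le 3n^9$. There is no circularity, since the proofs of Theorem \ref{th:JMS}, Lemma \ref{th:surprise} and Theorem \ref{tm:jms-plusss} nowhere use Theorem \ref{tm:jms}. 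It is worth noting, though, that this is not how the paper treats this statement: Theorem \ref{tm:jms} is simply quoted from \cite[Theorem 8.1]{JMS92}, whose original proof is a direct alternating-path analysis under the inequality $(c_1-c_2+1)^2\le 4c_2(n-c_1-1)$, giving $\pp{D}\le n^{10}$ without any detour through graphicality. What you have reconstructed is instead precisely the paper's route to the \emph{stronger} Theorem \ref{tm:GR2PIntr} (the largest \FG\ very simple region is P-stable), of which Theorem \ref{tm:jms} is the special case obtained via Theorem \ref{th:JMS}; indeed the paper explicitly observes that Theorem \ref{tm:jms2} alone would not suffice here, because full graphicality only guarantees the weaker inequality ${\varphi}_{\ds \! {FG}}$ with the $+1$ slack, which is exactly why Theorem \ref{tm:jms-plusss} is needed and why your appeal to it (rather than to \eqref{eq:bitweaker}) is the right move. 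One cosmetic caveat: the $3n^9$ bound in Theorem \ref{tm:jms-plusss} is established by counting realizations of the sequences $D+1^{+i}_{+j}$, which matches the definition \eqref{eq:localP} of $\pp{D}$ only up to the polynomial-factor equivalences proved in the Appendix (Theorem \ref{tm:appendix}); this affects the exponent, not P-stability.
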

\noindent Since in Theorem \ref{th:JMS} we proved that the very simple region $\mD[{\varphi}_{\ds \! {J\!M\!S}}]$ is fully graphic, therefore the next statement  is a powerful strengthening of Theorem \ref{tm:jms}:

\begin{theorem}\label{tm:GR2PIntr}
The largest fully graphic very simple region  	
\begin{displaymath}
\mD_{\max}\coloneqq\bigcup\{\dd{n}{c_2}{c_1}:\text{ $\dd{n}{c_2}{c_1}$ is fully graphic}\}
\end{displaymath}
 is  $P$-stable, and so the switch Markov chain is rapidly mixing on $\mD_{\max}$.
\end{theorem}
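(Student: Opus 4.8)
The plan is to reduce the $P$-stability of $\mD_{\max}$ to the single-sequence estimate of Theorem \ref{tm:jms-plusss}, with Lemma \ref{th:surprise} serving as the bridge. By the definition of $P$-stability it suffices to produce one fixed polynomial $p$ such that $\pp{D}\le p(n)$ for every (graphic) element $D$ of $\mD_{\max}$ of length $n$; I claim that $p(n)=3n^9$ works.

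Fix an arbitrary $D=(d_1,\dots,d_n)\in \mD_{\max}$. By the definition of $\mD_{\max}$ there are parameters $c_1\ge c_2$ with $D\in \dd{n}{c_1}{c_2}$ and $\dd{n}{c_1}{c_2}$ fully graphic; in particular $d_1\le c_1$ and $d_n\ge c_2$, and Lemma \ref{th:surprise} gives $c_1k\le k(k-1)+c_2(n-k)+1$ for every $k\in[1,n]$. The one computation to carry out is to check that $D$ satisfies the hypothesis of Theorem \ref{tm:jms-plusss}. For each $k$, monotonicity of $D$ together with $d_1\le c_1$ gives $\sum_{i=1}^k d_i\le k\,d_1\le k c_1$, and then
\[
\sum_{i=1}^k d_i\le k c_1\le k(k-1)+c_2(n-k)+1\le k(k-1)+d_n(n-k)+1,
\]
where the middle step is Lemma \ref{th:surprise} and the last step uses $c_2\le d_n$. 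Thus the hypothesis of Theorem \ref{tm:jms-plusss} holds and yields $\pp{D}\le 3n^9$. As $D$ was arbitrary, $\mD_{\max}$ is $P$-stable, and the rapid mixing of the switch Markov chain on $\mD_{\max}$ follows at once from \cite[Theorem 8.3]{P-stable}.

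The conceptual difficulty has been front-loaded into the two ingredients invoked above, so the reduction itself is only a couple of lines. The real obstacle is Theorem \ref{tm:jms-plusss}, which requires the refined version of the Jerrum--McKay--Sinclair counting argument that bounds $\pp{D}$ for a single sequence under the mild Erd\H{o}s--Gallai-type slack condition rather than under the original quadratic constraint $\varphi_{J\!M\!S}$. Within the present reduction the only subtlety worth flagging is that Theorem \ref{tm:jms-plusss} is phrased in terms of the true minimum $d_n$ of $D$, whereas Lemma \ref{th:surprise} supplies a bound involving the region's nominal lower bound $c_2$; the passage between them is exactly the inequality $c_2(n-k)\le d_n(n-k)$, reflecting the interplay between the region's nominal extremes $c_1,c_2$ and the sequence's actual extremes $d_1,d_n$.
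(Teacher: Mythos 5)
Your proposal is correct and follows essentially the same route as the paper, which likewise obtains Theorem \ref{tm:GR2PIntr} by combining Lemma \ref{th:surprise} (the inequality $c_1k\le k(k-1)+c_2(n-k)+1$ for fully graphic $\dd{n}{c_1}{c_2}$) with the single-sequence bound $\pp{D}\le 3n^9$ of Theorem \ref{tm:jms-plusss}; your explicit chain $\sum_{i=1}^k d_i\le kc_1\le k(k-1)+c_2(n-k)+1\le k(k-1)+d_n(n-k)+1$ is exactly the intended (and in the paper only sketched) verification, including the correct passage from the region's nominal bounds $c_1,c_2$ to the sequence's actual extremes $d_1,d_n$.
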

	
\medskip\noindent
Careful study of the proof of \cite[Theorem 8.1]{JMS92} reveals, that Jerrum, McKay and Sinclair actually proved the following, slightly stronger result.
\begin{theorem}\label{tm:jms2}
The very simple region $\mD[\jmsw]$ is P-stable, where
the property $\jmsw$ is defined as follows:
\begin{equation}\label{eq:bitweaker}
\jmsw\ \equiv \	\forall k\in [1,n] \quad  c_1\cdot k \le k\cdot (k-1)+c_2\cdot(n-k).
\end{equation}
\end{theorem}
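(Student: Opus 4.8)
The plan is to reduce $P$-stability of $\mD[{\varphi}^*_{\ds \! {J\!M\!S}}]$ to the per-sequence estimate of Theorem \ref{tm:jms-plusss}: I will show that every graphic $D$ in this very simple region satisfies the Erdős--Gallai-with-surplus hypothesis of that theorem, so that $\pp{D}\le 3\cdot n^{9}$, and hence the single polynomial $p(n)=3n^{9}$ witnesses $P$-stability. Thus the whole argument collapses to checking that membership in the region forces that hypothesis at every $k$.

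First I would record the algebraic meaning of the hypotheses, which also explains the word ``slightly stronger''. Put $q(k)=k^{2}-(c_{1}+c_{2}+1)k+nc_{2}$. The identity already computed inside the proof of Theorem \ref{th:JMS},
\[
(c_1-c_2+1)^2-4c_2(n-c_1-1)=(c_1+c_2+1)^2-4nc_2,
\]
shows that ${\varphi}_{\ds \! {J\!M\!S}}$ is exactly the assertion that the discriminant of $q$ is nonpositive, i.e. $q(k)\ge 0$ for \emph{every real} $k$. On the other hand, rearranging $c_1k\le k(k-1)+c_2(n-k)$ turns ${\varphi}^*_{\ds \! {J\!M\!S}}(n,c_1,c_2)$ into $q(k)\ge 0$ for every \emph{integer} $k\in[1,n]$. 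Hence ${\varphi}_{\ds \! {J\!M\!S}}\Rightarrow{\varphi}^*_{\ds \! {J\!M\!S}}$ and $\mD[{\varphi}_{\ds \! {J\!M\!S}}]\subseteq\mD[{\varphi}^*_{\ds \! {J\!M\!S}}]$, the inclusion being strict precisely when $q$ dips below zero strictly between the two integers straddling its vertex $(c_1+c_2+1)/2$.

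Now the core step. Fix a graphic $D=(d_1,\dots,d_n)\in\mD[{\varphi}^*_{\ds \! {J\!M\!S}}]$, so $D\in\dd{n}{c_1}{c_2}$ for some witnessing $n>c_1\ge c_2$ with ${\varphi}^*_{\ds \! {J\!M\!S}}(n,c_1,c_2)$. For each $k\in[1,n]$ we have $\sum_{i=1}^{k}d_i\le kc_1$ since $d_i\le c_1$, and $c_2(n-k)\le d_n(n-k)$ since $d_n\ge c_2$ and $n-k\ge 0$; combining these with ${\varphi}^*_{\ds \! {J\!M\!S}}$ gives
\[
\sum_{i=1}^{k}d_i\le kc_1\le k(k-1)+c_2(n-k)\le k(k-1)+d_n(n-k)+1 .
\]
Therefore $D$ meets the hypothesis of Theorem \ref{tm:jms-plusss}, so $\pp{D}\le 3n^{9}=3\,|D|^{9}$; as this bound is uniform over all graphic members of the region, $\mD[{\varphi}^*_{\ds \! {J\!M\!S}}]$ is $P$-stable.

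The main obstacle is not computational but one of logical ordering: I must be sure Theorem \ref{tm:jms-plusss} is available independently of this statement (its proof improves the switching method of \cite{JMS92} directly, and does not route back through Theorem \ref{tm:jms} or Theorem \ref{tm:jms2}), so that the deduction is not circular. A self-contained alternative, closer to the sentence preceding the theorem, would be to audit the original proof of \cite[Theorem 8.1]{JMS92} and verify that its degree hypothesis is invoked only to keep the extremal sequence $\lb(n,\Sigma,c_1,c_2)$ feasible and to supply switching partners, both of which amount to $q(k)\ge 0$ at integer $k\in[1,n]$ alone; there the delicate point would be confirming that no step secretly requires the real-valued discriminant inequality ${\varphi}_{\ds \! {J\!M\!S}}$ rather than its integer restriction ${\varphi}^*_{\ds \! {J\!M\!S}}$.
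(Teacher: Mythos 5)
Your proof is correct, but it takes a genuinely different route from the paper's. The paper gives no self-contained argument for Theorem \ref{tm:jms2}: it justifies the statement purely by re-examining the original switching proof of \cite[Theorem 8.1]{JMS92} and observing that it only ever uses the integer inequalities \eqref{eq:bitweaker} rather than the discriminant condition ${\varphi}_{\ds \! {J\!M\!S}}$ --- precisely the audit you sketch as a fallback in your last paragraph. Your main argument instead derives the theorem as an immediate corollary of Theorem \ref{tm:jms-plusss}: for a graphic $D\in\dd{n}{c_1}{c_2}$ with ${\varphi}^*_{\ds \! {J\!M\!S}}(n,c_1,c_2)$, the chain $\sum_{i=1}^{k}d_i\le kc_1\le k(k-1)+c_2(n-k)\le k(k-1)+d_n(n-k)+1$ verifies \eqref{eq:weaker-revisitedd} for every $k\in[1,n]$, whence $\pp{D}\le 3n^{9}$ uniformly over the region. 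This is valid and, as you correctly checked, non-circular: the proof of Theorem \ref{tm:jms-plusss} rests only on Lemma \ref{lm:path-rev} and the subsequent encoding argument, never on Theorem \ref{tm:jms2} or Theorem \ref{tm:jms}. Indeed the paper itself calls Theorem \ref{tm:jms-plusss} a ``direct strengthening'' of Theorem \ref{tm:jms2}, and your computation is exactly the unstated implication behind that phrase; it is the same one-line comparison the paper performs when combining Lemma \ref{th:surprise} with Theorem \ref{tm:jms-plusss} to obtain Theorem \ref{tm:GR2PIntr} (there the $+1$ slack is essential, while in your setting it is not even needed). What each approach buys: the paper's citation route substantiates the historical claim that Jerrum, McKay and Sinclair already proved this result in 1992, whereas your route yields a proof self-contained within the paper, with the explicit polynomial $3n^{9}$, at the cost of inverting the expository order, since Theorem \ref{tm:jms-plusss} must be established first. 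Your preliminary observations --- the identity $(c_1-c_2+1)^2-4c_2(n-c_1-1)=(c_1+c_2+1)^2-4nc_2$ showing that ${\varphi}_{\ds \! {J\!M\!S}}$ is the real-variable nonnegativity of $q(k)=k^2-(c_1+c_2+1)k+nc_2$ while ${\varphi}^*_{\ds \! {J\!M\!S}}$ is its restriction to integers $k\in[1,n]$ --- are correct and explain why the result is ``slightly stronger,'' but they are not needed for the proof itself.
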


Unfortunately, Theorem \ref{tm:jms2} does not yield  Theorem \ref{tm:GR2PIntr} because  the assumption that ``{\em every element of $\dd{n}{c_2}{c_1}$ is graphic}'' does not imply \eqref{eq:bitweaker}. Fortunately, as we already proved (see Lemma \ref{th:surprise}), that the  following, slightly weaker inequality holds for a fully graphic $\dd{n}{c_2}{c_1}$ :
\begin{displaymath}
{\varphi}_{\ds \! {FG}}(n,c_2,c_1) \ \equiv \ \forall k\in [1,n] \quad  c_1\cdot k \le k\cdot (k-1)+c_2\cdot(n-k)+1.	
\end{displaymath}
Using this observation, the following result, which is a direct strengthening of Theorem \ref{tm:jms2},  already  yields Theorem \ref{tm:GR2PIntr}.

\begin{theorem}\label{tm:jms-plusss}
If  a graphic degree sequence	 ${D}=(d_1, d_2, \dots, d_n)$ satisfies
\begin{equation}\label{eq:weaker-revisitedd}
\forall k\in [1,n] \quad  \sum_{i=1}^k d_i\le k\cdot (k-1)+d_n\cdot(n-k)+1,
\end{equation}
then  $\pp{{D}}\le 3\cdot n^{9}$. 	
\end{theorem}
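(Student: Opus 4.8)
The plan is to bound the numerator of $\pp{D}$ directly. Unfolding \eqref{eq:localP}, it suffices to prove
\[
\sum_{1\le i<j\le n}|\mathcal{G}(D+1^{-i}_{-j})|\le 3n^{9}\,|\mathcal{G}(D)|,
\]
so I would count the triples $(i,j,H)$ with $i<j$ and $H\in\mathcal{G}(D+1^{-i}_{-j})$, and exhibit, in the spirit of Jerrum--McKay--Sinclair (Theorems \ref{tm:jms} and \ref{tm:jms2}), a \emph{repair map} $\Phi$ sending each such triple to a genuine realization $G\in\mathcal{G}(D)$ together with a short certificate recording how $G$ was produced. If every $G\in\mathcal{G}(D)$ receives at most $3n^{9}$ triples in total, the inequality follows. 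In $H$ the two vertices $i$ and $j$ are each deficient by one degree relative to $D$, so $\Phi$ must add one unit to $\deg(i)$ and to $\deg(j)$ while leaving all other degrees intact; the certificate is the list of vertices touched, so that $\Phi$ can be inverted once $G$ and the certificate are known.

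The repair splits into cases according to the local structure at $\{i,j\}$. \textbf{Case 1} ($ij\notin E(H)$): simply set $G=H+ij$. Reversing this only requires naming an edge of $G$, so the number of Case-1 triples mapped to a fixed $G$ is at most $|E(G)|=\Sigma/2\le n^{2}$. \textbf{Case 2} ($ij\in E(H)$, a single switch is available): choose an edge $kl$ of $H$ with $k\notin N_H(i)$ and $l\notin N_H(j)$, delete $kl$ and insert $ik$ and $jl$; this fixes both deficiencies and keeps $\deg(k),\deg(l)$ unchanged. Here the certificate is the $4$-tuple $(i,j,k,l)$, so each $G$ gets at most $n^{4}$ such preimages. \textbf{Case 3} (no single switch works): I would repair along a shortest alternating path issuing from the non-neighbourhoods of $i$ and $j$, performing the analogous chain of deletions and insertions; the certificate now names the vertices of this path, and the whole point is that its length---hence the number of named vertices---stays bounded, giving at most $n^{9}$ preimages per $G$. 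Summing the three cases, $\pp{D}\le n^{2}+n^{4}+n^{9}\le 3n^{9}$.

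The degree hypothesis \eqref{eq:weaker-revisitedd} enters exactly in Case 3, to guarantee that the alternating path is short. Case 2 fails only when the neighbourhoods of $i$ and $j$ are so large that no $H$-edge runs between the non-neighbours of $i$ and the non-neighbours of $j$; this forces $d_i,d_j$ and the whole top of the sequence to sit close to the Erdős--Gallai extremal profile. The inequality \eqref{eq:weaker-revisitedd}, which says that $\sum_{i=1}^k d_i$ exceeds $k(k-1)+d_n(n-k)$ by at most $1$, is precisely a quantitative cap on this density: combined with the range reduction of Proposition \ref{th:trivi} (only $d_n\le k\le d_1$ matter) and the Tripathi--Vijay criterion of Theorem \ref{tm:TV+}, it bounds how many vertices the obstruction can involve, and hence the number of named vertices in the certificate, yielding the exponent $9$.

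The main obstacle is Case 3: proving that the alternating-path repair always succeeds and that its certificate has bounded size, that is, turning the soft ``density is capped'' intuition into an explicit bound on the path length and then checking that the resulting decoding $G\mapsto(i,j,H)$ is injective on each certificate. The bookkeeping of degenerate configurations---when some named vertices coincide, when an inserted edge would clash with an existing one, or when the path would revisit $i$ or $j$---is where the slack ``$+1$'' in \eqref{eq:weaker-revisitedd} must be spent carefully; everything else is the routine counting sketched above.
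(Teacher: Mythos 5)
Your counting shell---encode each perturbed realization by a repaired realization of $D$ together with a short certificate, then bound the number of preimages---is exactly the skeleton of the paper's argument (run in mirror image: the paper maps realizations of $D+1^{+i}_{+j}$ down to realizations of $D$, you repair realizations of $D+1^{-i}_{-j}$ up; this difference is immaterial). But the proposal is missing the theorem's actual mathematical content: a proof that the repair in your Case 3 always exists and has \emph{constant} bounded length. In the paper this is Lemma \ref{lm:path-rev}: if a realization of $D+1^{+p}_{+q}$ with $\Gamma(v_p)=\Gamma(v_q)$ admits no alternating trail of odd length $1,3,5$ or $7$ between $v_p$ and $v_q$ with one more edge than non-edge, then the sets $X=\Gamma(v_p)$, $Y=\{y:|X\setminus \Gamma(y)|\ge 2\}$ and $Z=\Gamma(Y)\setminus X$ are forced to be a clique, an independent set and a clique, with $G[X,Z]$ complete; summing degrees over the clique $K=X\cup Z$ of size $k$ then gives $\sum_{i=1}^k d_i\ge k(k-1)+(n-k)d_n+2$, contradicting \eqref{eq:weaker-revisitedd} (this is precisely where the ``$+1$'' slack is spent). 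Your proposed route to shortness---combining Proposition \ref{th:trivi} with the Tripathi--Vijay criterion (Theorem \ref{tm:TV+}) to ``bound how many vertices the obstruction can involve''---cannot work as stated: those results test whether a \emph{sequence} satisfies the Erdős--Gallai inequalities and say nothing about alternating paths inside a fixed realization $H$. You flag Case 3 as the main obstacle yourself, and it is exactly where the entire difficulty of the theorem resides; without it the exponent $9$ and the constant $3$ are unsupported.

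Two further structural points would have to be added. First, the effective case split is governed by whether $\Gamma(v_i)=\Gamma(v_j)$, not by whether $ij\in E(H)$ or whether a single switch is available: when the neighbourhoods differ, the paper first flips a length-$2$ alternating trail $v_iv_mv_j$, transforming the problem into one about the doubled perturbation $D+1^{+j}_{+j}$, and then applies Lemma \ref{lm:path-rev} with $p=q=j$, i.e.\ to a \emph{closed} alternating trail at a single vertex. Your sketch never contemplates the $1^{+j}_{+j}$ case, which is indispensable (and is why the paper's perturbation operations explicitly allow $i=j$). Second, the bound $3n^9$ arises from this two-stage encoding---at most $n^8$ certificates when the neighbourhoods agree, at most $2\cdot n^2\cdot n^7$ when they do not---rather than from your $n^2+n^4+n^9$ tally, whose last term has no derivation behind it. So: correct framework, genuine gap at the core.
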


\begin{proof}
Given a graph $G=\langle V,E\rangle$, an {\em alternating trail  of length $\ell$} is a sequence of vertices $v_0,\dots, v_\ell$ such that $\{v_iv_{i+1}\}$ is edge if and only if $i$ is
%zero or
even, and the pairs $\{\{v_i,v_{i+1}\}:i<\ell\}$ are pairwise distinct. An alternating trail is an {\em alternating path} if the vertices $x_0,\dots, x_n$ are pairwise different apart from the pair $\{x_0,x_1\}$.

The proof of Theorem \ref{tm:jms-plusss} is based on the following Lemma.
\begin{lemma}\label{lm:path-rev}
Let $D^*=D+ 1^{+p}_{+q}$ for some $1\le p,q\le n$, where $D$ satisfies inequality $($\ref{eq:weaker-revi}$)$.
If 	$G$ is a graph with vertex set $V=\{v_1,\dots v_n\}$ and  with degree sequence $D^*$, moreover  $\Gamma(v_p)=\Gamma(v_q)$, then there exists an  alternating trail of odd length 1, 3, 5 or 7 between $v_p$ and $v_q$, which contains one more edges than non-edges.
\end{lemma}

\begin{proof}[Proof of Lemma \ref{lm:path-rev}]
Suppose for the contrary, that there is no such alternating trail. We will describe the structure of $G.$  First, observe that either $v_p=v_q$ or the edge $v_pv_q$ is missing (otherwise there is an alternating trail  of length $1$). Let
\begin{equation}\label{eq:Y-rev}
S=\{v_p,v_q\},\quad X=\Gamma(v_p),\quad
Y=\{y\in V:|X\setminus \Gamma(y)|\ge 2\},\quad
Z=\Gamma(Y)\setminus X.
\end{equation}
\noindent
Observe the following facts:
\begin{enumerate}[{\rm (i)}]
\item \label{Xcl-rev} The set $X$ is a clique (otherwise there is an alternating trail of length 3, namely $v_p\to X\to X \to v_q$).
\item The set  $Y$ is an independent set.

    Indeed, if $\{y_0,y_1\}\in {[Y]}^{2}\cap E$, then $|X\setminus \Gamma(y_i)|\ge 2$ for $i<2$ implies that there is $\{x_0,x_1\}\in {[X]}^{2}$ such that $\{x_i,y_i\}\notin E$ for $i<2$. Thus $v_p,x_0,y_0,y_1,x_1,v_q$ is an alternating trail  of length 5.
\item \label{Zcl-rev}The set $Z$ is a clique..

    Indeed, if $\{z_0,z_1\}\in {[X]}^{2}\setminus  E$, then there are $y_0,y_1\in Y$ such that $\{z_i,y_i\}\in E$ for $i<2$, but we can not guarantee that $y_0\ne y_1$. Since $|X\setminus \Gamma(y_i)|\ge 2$ for $i<2$, there is $\{x_0,x_1\}\in {[X]}^{2}$ such that $\{x_i,y_i\}\notin E$ for $i<2$. Thus $v_p,x_0,y_0,z_0,z_1,y_1,x_1,v_q$ is an alternating trail of length 7, but not necessarily a path.

\item \label {XZ-rev} The induced bipartite graph $G[X,Z]$ is complete.

    Indeed, if $x\in X$ and $z\in Z$ with $\{x,z\}\notin E$, then pick first $y\in Y$ with $\{y,z\}\in E$. Since $|X\setminus \Gamma(y)|\ge 2$, we can pick $x'\in X$ such that $x'\ne x$ and $\{x',y\}\notin E$. Then $x_p,x,z,y,x',v_q$ is an alternating trail of length 5.
\item The sets $\{v_p,v_q\}, X, Y, Z$ are pairwise disjoint.
\end{enumerate}
Let $$K= X \cup Z\quad\text{ and }\quad R = V\setminus (K \cup Y\cup S).$$
We have $|K| +|Y| + |R|+|S|=|V|=n$. Putting together \eqref{Xcl-rev}, \eqref{Zcl-rev} and  \eqref{XZ-rev}, we obtain
\begin{enumerate}[(i)]\addtocounter{enumi}{5}
\item $K$ is a clique.
\end{enumerate}

\noindent Write $k=|K|$. We will estimate  the sum of the degrees of the vertices in $K$. To start with, write
\begin{multline}\label{eq-rev:main1}
\sum_{i=1}^kd_i \ge  \sum_{v\in  K}\deg(v)=\\  \sum_{v\in  K}|\Gamma(v)\cap K|+
\sum_{v\in  S}|\Gamma(v)\cap K|+\sum_{v\in R}|\Gamma(v)\cap K|+\sum_{y\in Y}|\Gamma(y)\cap K|
\end{multline}
Since $K$ is a clique,
\begin{equation}\label{eq-rev:K-clique}
\sum_{v\in  K}|\Gamma(v)\cap K|= k\cdot (k-1).
\end{equation}
Since $\Gamma(v)=X\subset K$ for $v\in S$, we have
\begin{equation}\label{eq-rev:S}
\sum_{v\in  S}|\Gamma(v)\cap K|=|S||X|.
\end{equation}
Since $|X\setminus \Gamma(v)|\le 1$ for $v\in R$,
we have
\begin{equation}\label{eq-rev:R}
\sum_{v\in  R}|\Gamma(v)\cap K|\ge |R|(|X|-1).
\end{equation}
By the construction, $\Gamma(Y)\subset K$, so
\begin{equation}\label{eq-rev:Y}
\sum_{y\in  Y}|\Gamma(y)\cap K|\ge |Y|\cdot d_n.
\end{equation}
Putting together, we have
\begin{equation}\label{eq-rev:444}
\sum_{i=1}^kd_i\ge k\cdot(k-1)+ |S|\cdot|X|+|R|\cdot (|X|-1)+|Y|\cdot d_n.
\end{equation}
Since  $|X|=\deg(v_p)\ge d_n+(3-|S|)$, we obtain
\begin{equation}\label{eq-rev:main2}
\sum_{i=1}^kd_i\ge  k\cdot(k-1)+ |S|\cdot (d_n+3-|S|)+|R|(d_n+2-|S|)+|Y|d_n.
\end{equation}
Observe that $|R|+|Y|+|S|=n-k$. Clearly $|S|=1$ or $|S|=2$.

\medskip
\noindent 	If $|S|=1$, then \vspace{-5pt}
\begin{multline}\label{eq-rev:S1}
|S|\cdot (d_n+3-|S|)+|R|(d_n+2-|S|)+|Y|d_n=d_n+2+ (|R|+|Y|)d_n+|R|=\\
(|R|+|Y|+|S|)d_n +2+|R|= (n-k)d_n+2.
\end{multline}
If $|S|=2$, then \vspace{-5pt}
\begin{multline}\label{eq-rev:S2}
|S|\cdot (d_n+3-|S|)+|R|(d_n+2-|S|)+|Y|d_n=2(d_n+1)+ (|R|+|Y|)d_n=\\
(|R|+|Y|+|S|)d_n +2= (n-k)d_n+2.
\end{multline}
So in both case, from  \eqref{eq-rev:main2}  we obtain
\begin{equation}\label{eq-rev:main3}
\sum_{i=1}^k d_i\ge k\cdot(k-1)+ (n-k)d_n+2,
\end{equation}
which contradicts \eqref{eq:weaker-revi}.  So we proved  Lemma \ref{lm:path-rev}.
\end{proof}

\noindent The proof of   Theorem \ref{tm:jms-plusss} from Lemma \ref{lm:path-rev} is similar to the proof of \cite[Theorem 8.1]{JMS92} from \cite[Lemma 1]{JMS92}.

\medskip\noindent
Assume that $G'$  is a graph such that the degree sequence of $G'$ is $D'=D+1^{+i}_{+j}$ for some $1\le i<j\le n$.

If $\Gamma_{G'}(v_i)=\Gamma_{G'}(v_j)$, then  we can apply Lemma \ref{lm:path-rev} for $G=G'$, $p=i$ and $q=j$ to obtain  an  alternating trail  $P$ of odd length 1, 3, 5 or 7 between $v_i$ and $v_j$, which contains one more edges than non-edges. Flipping edges and non-edges along the trail $P$ we obtain a graph $G^\dag$ which is a realization of $D.$

If $\Gamma_{G'}(v_i)\ne \Gamma_{G'}(v_i)$, then  there is an alternating trail $Q$ of length $2$ between $v_i$ and $v_j$. Assume that $Q=v_iv_mv_j$, where $v_iv_m$ is an edge, and $v_mv_j$ is a non-edge. Flipping edges along  trail $Q$ we obtain a graph $G^*$ with degree sequence  $$D^*=D'+1^{+j}_{-i}=D+1^{+i}_{+j}+1^{-i}_{+j}=D+1^{+j}_{+j}.$$

Now we can apply Lemma \ref{lm:path-rev} for $G=G^*$ with $p=q=j$ to obtain  an alternating trail  $P$ of odd length 1, 3, 5 or 7 from $v_j$ to $v_j$, which contains one more edges than non-edges. Flipping edges and non-edges along the trail $P$ we obtain a graph $G^\dagger$ which is a realization of $D.$

How much information  should  we use to obtain back $G'$ from $G^\dagger$? We need to know if we were in case  $\Gamma_{G''}(v_i)=\Gamma_{G''}(v_j)$ or in case $\Gamma_{G''}(v_i)\ne \Gamma_{G''}(v_j)$.

If $\Gamma_{G''}(v_i)=\Gamma_{G''}(v_j)$, we should know $P$. The  trail  $P$ contains at most 8 vertices, so this is at most $n^8$ possibilities.

If $\Gamma_{G''}(v_i)\ne \Gamma_{G''}(v_j)$, we should know $P$ and $Q$. Since the first and the last element of $P$ are the same, we have at most $n^7$ possibilities for $P$. Knowing $P$  we can compute $G^*$, and we also know  $v_i$ or $v_j$. We should know  $Q$. We know one vertex ($v_i$ or $j$) from $Q$. So knowing P  we have
$n^2$ possibilities for  $Q$. Knowing $Q$ we can compute  $G''$. We should know which endpoint  of $Q$ is $v_i$ and which is $v_j$. In this case we have at most $2\cdot n^2\cdot n^7=2\cdot  n^9$ possibilities.

Putting together, for a given $G^*$ we have at most $n^8+2\cdot n^9\le 3\cdot n^9$ possibilities for $G^\dagger$.
\end{proof}

The next theorem gives us a method to prove that a simple region $\mD[{\varphi}]$ is P-stable. Namely, it is enough to prove that $\lb(n,\Sigma,c_1,c_2)$ satisfies \eqref{eq:weaker-revisitedd} from Theorem   \ref{tm:jms-plusss} whenever ${\varphi}(n,\Sigma,c_1,c_2)$ holds.

\begin{theorem}\label{tm:leg-p-stable}
If $n>c_1\ge c_2$ and $nc_1\ge \Sigma\ge nc_2$ are natural numbers,
$\Sigma$ is even,  then the following are equivalent:
\begin{enumerate}[(1)]
\item $\lb(n,\Sigma,c_1,c_2)$ satisfies \eqref{eq:weaker-revisitedd} from Theorem
\ref{tm:jms-plusss},
\item every $D\in \dds{n}{\Sigma}{c_1}{c_2}$ satisfies the \eqref{eq:weaker-revisitedd} from Theorem \ref{tm:jms-plusss}.
\end{enumerate}
\end{theorem}

\begin{proof}
To show that (1) implies (2), let $D=(d_1,\dots,d_n)$ be an arbitrary element of $\dds{n}{\Sigma}{c_1}{c_2}$, and fix  $1\le k\le n$. Write $LEG(n,\Sigma,c_1,c_2)=(e_1,\dots, e_n)$. Then,  $\sum_{i=1}^kd_i\le \sum_{i=1}^ke_i$, and  $\sum_{i=1}^ke_i\le k(k-1)+ (n-k)e_n+1$ because
$\lb(n,\Sigma,c_1,c_2)$ satisfies \eqref{eq:weaker-revisitedd}. Putting together these two inequalities we obtain
\begin{displaymath}
\sum_{i=1}^kd_i\le  k(k-1)+ (n-k)e_n+1=k(k-1)+ (n-k)c_2\le k(k-1)+ (n-k)d_n+1,
\end{displaymath}
which implies that  \eqref{eq:weaker-revisitedd} holds for $D$ and $k$.
\end{proof}

\bigskip\noindent
By $(P2)$, the simple region $\mathbb D[{\varphi}_{\ds \! {GS}}]=\mD[{\varepsilon}_{8/9}]$
is P-stable. We also proved that $\mD[{\varphi}_{\varepsilon}]$ is fully graphic
for ${\varepsilon}>0$. The next question is very natural.
\begin{problem}
Is the simple region $\mD[{\varphi_\epsilon}]$  P-stable for ${\varepsilon}>0$?
\end{problem}

\bigskip
\subsection{Construction of  $P$-stable families with special properties}\label{sec:another}
\medskip

We  demonstrate the existence of a fully graphic  simple region,
whose  P-stability can be derived from   Theorem \ref{tm:jms-plusss}, whereas
application of  \cite[Theorem 8.3]{JMS92} does not yield its P-stability.

\begin{theorem}\label{tm:d_0}
The simple region
\begin{equation}\label{eq:simple_d0}
\mD_0=\bigcup\{\dds{2m}{4m}{m}{1}:m\ge 4\}
\end{equation}
is fully graphic and  P-stable, although $\mD_0\cap \mD[\varphi^*_{J\!M\!S}]=\emptyset$.
\end{theorem}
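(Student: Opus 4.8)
The plan is to verify the three claims of Theorem \ref{tm:d_0} separately, handling the structural facts first and the disjointness last. For a fixed $m\ge 4$, the region $\dds{2m}{4m}{m}{1}$ consists of degree sequences of length $n=2m$, with entries between $c_2=1$ and $c_1=m$, summing to $\Sigma=4m$. Throughout I would use the machinery of Section~\ref{sec:non-graphic}: by Corollary~\ref{th:check-simple} and Theorem~\ref{tm:leg-p-stable} it suffices to analyze the single primitive sequence $\lb(2m,4m,m,1)$ for each $m$.

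First I would compute $\lb(2m,4m,m,1)$ explicitly. By \eqref{eq:alpha} we get $\alpha=\lfloor (4m-2m)/(m-1)\rfloor=\lfloor 2m/(m-1)\rfloor$, which equals $2$ for $m\ge 3$ (since $2m/(m-1)=2+2/(m-1)$ and $0<2/(m-1)<1$ for $m\ge 4$). Then \eqref{eq:a} gives $a=4m-(2m+(2m-3)\cdot 1)=4m-(4m-3)=3$. So for every $m\ge 4$ the primitive element is $E_m=(m,m,3,1,1,\dots,1)$ with $2m-3$ trailing ones. To prove full graphicity, by Corollary~\ref{th:check-simple} I would check that $E_m$ is \eg; using Theorem~\ref{tm:TV+} it is enough to verify \eqref{eq:EG-prop} at the ``descent'' indices $k\in\{2,3\}$ (the only places where consecutive entries differ, apart from the trivial tail). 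This reduces to two short inequalities: at $k=2$, $2m\le 2+\sum_{i\ge 3}\min(d_i,2)=2+3+(2m-3)\cdot 1$ wait—$\min(3,2)=2$—so the right side is $2+2+(2m-3)=2m+1$, giving $2m\le 2m+1$; and at $k=3$, $2m+3\le 6+\sum_{i\ge 4}\min(1,3)=6+(2m-3)=2m+3$, an equality. Both hold, so $E_m$ is graphic and $\mD_0$ is fully graphic.

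Next, for P-stability I would invoke Theorem~\ref{tm:jms-plusss} via Theorem~\ref{tm:leg-p-stable}: it suffices to show $E_m$ satisfies \eqref{eq:weaker-revisitedd}, i.e. $\sum_{i=1}^k d_i\le k(k-1)+d_n(n-k)+1$ for all $k$, where $d_n=1$ and $n=2m$. The partial sums of $E_m$ are $m, 2m, 2m+3, 2m+4, 2m+5,\dots$ For $k=1$: $m\le 0+(2m-1)+1=2m$, true for $m\ge 0$. For $k=2$: $2m\le 2+(2m-2)+1=2m+1$, true. For $k=3$: $2m+3\le 6+(2m-3)+1=2m+4$, true. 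For $k\ge 4$ the left side grows by $1$ per step while the right side's dominant term $k(k-1)$ grows quadratically, so these are easily dominated; I would note $\sum_{i=1}^k d_i\le 2m+3+(k-3)\le k(k-1)+(2m-k)+1$ holds comfortably for $k\ge 4$ and $m\ge 4$. Hence every $E_m$ meets \eqref{eq:weaker-revisitedd}, so $\pp{D}\le 3n^9$ for all $D\in\mD_0$, proving P-stability.

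Finally I would verify $\mD_0\cap \mD[\varphi^*_{J\!M\!S}]=\emptyset$, which I expect to be the main obstacle since it requires showing that \emph{no} sequence of $\mD_0$ satisfies $\varphi^*_{J\!M\!S}$. Recall $\varphi^*_{J\!M\!S}\equiv (\Sigma-nc_2)(nc_1-\Sigma)\le (c_1-c_2)\{(\Sigma-nc_2)(n-c_1-1)+(nc_1-\Sigma)c_2\}$. Substituting $n=2m$, $\Sigma=4m$, $c_1=m$, $c_2=1$: the LHS is $(4m-2m)(2m^2-4m)=2m\cdot 2m(m-2)=4m^2(m-2)$, and the RHS is $(m-1)\{2m(2m-m-1)+(2m^2-4m)\cdot 1\}=(m-1)\{2m(m-1)+2m(m-2)\}=(m-1)\cdot 2m\{(m-1)+(m-2)\}=2m(m-1)(2m-3)$. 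The condition $\varphi^*_{J\!M\!S}$ would require $4m^2(m-2)\le 2m(m-1)(2m-3)$, i.e. $2m(m-2)\le (m-1)(2m-3)=2m^2-5m+3$, i.e. $2m^2-4m\le 2m^2-5m+3$, i.e. $m\le 3$. Since every constituent region of $\mD_0$ has $m\ge 4$, the inequality fails for all of them, so $\varphi^*_{J\!M\!S}(2m,4m,m,1)$ never holds and the intersection is empty. The delicate point here is confirming that membership in $\mD[\varphi^*_{J\!M\!S}]$ is governed precisely by the parameters $(n,\Sigma,c_1,c_2)$ of the defining region, so that disjointness at the level of parameter tuples yields disjointness of the sequence sets; this follows because any $D\in\mD_0$ lies in some $\dds{2m}{4m}{m}{1}$ and its own parameters $(n,\Sigma,c_1,c_2)$ are exactly $(2m,4m,m,1)$, which we have shown violates $\varphi^*_{J\!M\!S}$.
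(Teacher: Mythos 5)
Your first two parts are correct and coincide with the paper's own proof in every essential detail: the paper likewise computes $D_m=\lb(2m,4m,m,1)=(m,m,3)^\frown(1)_{2m-3}$, verifies only $EG_2$ and $EG_3$ via the Tripathi--Vijay Theorem \ref{tm:TV+} (its Lemma \ref{lm:dm_0_2}, with the same two inequalities $2m\le 2m+1$ and $2m+3=2m+3$), checks \eqref{eq:weaker-revisitedd} at $k=1,2,3$ and dismisses $k\ge 4$ by the same growth comparison (its Lemma \ref{lm:dm-3}, where the right-hand side gains $2k-1$ per step against $1$ on the left), and obtains P-stability through Theorems \ref{tm:leg-p-stable} and \ref{tm:jms-plusss}. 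Your evaluation of the $\jms$ inequality at the tuple $(2m,4m,m,1)$, failing precisely for $m\ge 4$, is also exactly the computation in the paper's Lemma \ref{lm:D_0_1}.

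The problem is your final bridging step, and you correctly flagged it as the delicate point before resolving it incorrectly. The claim that any $D\in\dds{2m}{4m}{m}{1}$ has ``its own parameters exactly $(2m,4m,m,1)$'' is false: membership in $\dd{n}{c_1}{c_2}$ only requires $c_1\ge d_1$ and $d_n\ge c_2$, the bounds need not be attained, and membership in $\mD[\jms]$ is \emph{existential} over all admissible tuples. Indeed the bridge cannot be repaired, because the set-level disjointness is false as literally stated: $(2)_{2m}\in\dds{2m}{4m}{m}{1}\subset\mD_0$, while the tuple $(2m,4m,2,2)$ satisfies $\jms$ trivially ($0\le 0$), so $(2)_{2m}\in\mD[\jms]$; worse, the entire subregion $\dds{2m}{4m}{3}{1}\subset\dds{2m}{4m}{m}{1}$ lies in $\mD[\jms]$, since for $c_1=3$, $c_2=1$ the inequality reads $4m^2\le 4m(2m-3)$, valid for all $m\ge 3$. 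To be fair, the paper's own proof never addresses this either: Lemma \ref{lm:D_0_1} establishes only that the \emph{defining} tuple $(2m,4m,m,1)$ violates $\jms$, i.e.\ that the regions $\dds{2m}{4m}{m}{1}$ are not among those constituting $\mD[\jms]$, which is all that the intended message (that (P3) does not yield Theorem \ref{tm:jms-plusss}) requires; the theorem's displayed claim $\mD_0\cap\mD[\jms]=\emptyset$ should be read in that parameter-level sense. If you want a true sequence-level statement, the right one is that the primitive element $D_m$ itself belongs to no region of $\mD[\jms]$: any admissible tuple containing $D_m$ forces $n=2m$, $\Sigma=4m$, $c_1\ge m$, $c_2\in\{0,1\}$, and the $\jms$ inequality then reduces to $3c_1\le 2m+3$ when $c_2=1$ and to $c_1^2+c_1\le 4m$ when $c_2=0$, both of which fail for $c_1\ge m\ge 4$. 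Your proposal (like the paper) would need this quantification over all tuples, not just the single tuple computation, to support any sequence-level disjointness claim.
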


\begin{proof}
First, observe that 	
\begin{equation}\label{eq:D_m}
D_m\coloneqq(m)_2(3)_1(1)_{2m-3}=\lb(2m,4m,m,1).
\end{equation}

\begin{lemma}\label{lm:D_0_1}
For $m\ge 4$, $D_m$ does not satisfy the $\varphi^*_{J\!M\!S}$ inequality.
\end{lemma}

\begin{proof}
Indeed, $n=2m$, $c_1=m$, $c_2=1$, $\Sigma=4m$, so \vspace{-10pt}
\begin{multline*}
LHS_{\varphi^*_{J\!M\!S}}-RHS_{\varphi^*_{J\!M\!S}}=\\
(\Sigma-c_2n)(c_1n-\Sigma)- (c_1-c_2)[(\Sigma-c_2n)(n-c_1-1)+(c_1n-\Sigma)c_2]=\\
4(m^2 - 2m)m-[2((m - 1)m + m^2 - 2m)(m - 1)]= 2m^2-6m.
\end{multline*}
which is positive for $m\ge 4$, so we proved the Lemma.
\end{proof}
\begin{lemma}\label{lm:dm_0_2}
$D_m$ is graphic.
\end{lemma}
\begin{proof}
By the Tripathi-Vijay Theorem \ref{tm:TV+}, we should check only $EG_2$ and $EG_3$ for $D_m$. But
\begin{equation}\label{eq:dm_0_2_1}\tag{$EG_2$}
\sum_{i=1}^2d_i =2m < 2+(2+(2m-3)\cdot 1) =2(1-2)+\sum_{i=3}^{2m}\min(d_i,2),
\end{equation}	
and
\begin{equation}\label{eq:dm_0_2_2}\tag{$EG_3$}
\sum_{i=1}^3d_i =2m+3 =6+  (2m-3)\cdot 1 =  3(3-1)+\sum_{i=4}^{2m}\min(d_i,3).
\end{equation}	
\end{proof}

\begin{lemma}\label{lm:dm-3}
$D_m$ satisfies \eqref{eq:weaker-revisitedd} from Theorem \ref{tm:jms-plusss}.
\end{lemma}
\begin{proof}
If $k=1,2,3$, inequality \eqref{eq:weaker-revisitedd} is the following:
\begin{gather}
d_1=m \le 1(1-0)+(2m-1)1+1 \tag{$k=1$}\\
d_1+d_2=2m < 2(2-1)+(2m-2)1+1, \tag{$k=2$}\\
d_1+d_2+d_3=2m+3\le 3(3-1)+(2m-3)1+1, \tag{$k=3$}\\
\end{gather}
If $k\ge 3$, then $EG_k$ implies $EG_{k+1}$, because , the LHS is increased by 1, and the RHS is increased by $2k-1$.
\end{proof}
The lemmas together prove the theorem.
\end{proof}

\section{Large  \FG\ regions in very simple P-stable  regions} \label{sec:P-graphic}

In the first two  subsections we review the necessary facts about split graphs and  Tyshkevich product.

\subsection*{Split graphs.}
A $G=(V,E)$ graph is a \textbf{split graph} if its vertices can be partitioned into a clique and an independent set. Split graphs were introduced by Földes and Hammer (\cite{FH77}).

Split graphs are recognizable from their degree sequences:
\begin{theorem}[Hammer and Simeone, 1981 \cite{HS}, Tyshkevich, Melnikov and Kotov \cite{T81}]\label{th:HS}
Assume that $G$ is a graph with degree sequence $D=(d_1,\dots, d_n)$, where $d_1\ge d_2\ge\dots\ge d_n$. Let $m$ be the largest value of $\, i$, such that  $d_i\ge i-1.$ Then $G$ is a split graph if and only if
$$
\sum_{i=1}^m d_i = m(m-1) + \sum_{i=m+1}^n d_i.
$$
\end{theorem}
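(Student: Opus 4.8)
The plan is to prove the Hammer--Simeone characterization of split graphs directly, using the definition of $m$ as the largest index with $d_i\ge i-1$. The key observation is that $m$ is exactly the candidate size of the clique: in any split partition into a clique $C$ and an independent set $I$, the clique vertices are the high-degree ones. First I would establish the \emph{forward direction} (split $\Rightarrow$ the equality). Suppose $G$ has a split partition $V=C\sqcup I$ with $|C|=c$. Each vertex in $C$ is adjacent to the other $c-1$ vertices of $C$, so has degree $\ge c-1$; this forces the $c$ largest degrees to satisfy $d_i\ge c-1\ge i-1$ for $i\le c$, giving $m\ge c$. Conversely one shows the vertices beyond position $c$ cannot have such large degree, pinning down the relationship between $m$ and the partition. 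The cleanest route is to show one may always choose a split partition with exactly $m$ clique vertices, namely $\{v_1,\dots,v_m\}$, and then \emph{count edges two ways}.

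The heart of the argument is the double counting. Split the edge set according to the partition $K=\{v_1,\dots,v_m\}$ and its complement. The sum $\sum_{i=1}^m d_i$ counts, for each clique vertex, its edges inside $K$ (contributing $m(m-1)$ in total, since $K$ is a clique, each such edge counted twice) plus its edges going to the independent part. Meanwhile $\sum_{i=m+1}^n d_i$ counts the degrees of the independent vertices, and since that part is independent, every one of those edges has its other endpoint in $K$. The identity
\[
\sum_{i=1}^m d_i = m(m-1) + \sum_{i=m+1}^n d_i
\]
is then precisely the statement that the number of cross-edges counted from the $K$-side equals the number counted from the $I$-side, which holds automatically once $K$ is a clique and $I$ is independent. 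So the forward direction reduces to verifying that the partition realizing the split structure can be taken to be exactly $(\{v_1,\dots,v_m\},\{v_{m+1},\dots,v_n\})$.

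For the \emph{reverse direction} (equality $\Rightarrow$ split), I would argue that the displayed equality forces $K=\{v_1,\dots,v_m\}$ to be a clique and $R=\{v_{m+1},\dots,v_n\}$ to be independent. Here I use the two structural inequalities that always hold: $\sum_{i=1}^m d_i\ge m(m-1)+(\text{cross-edges})$ with equality iff $K$ is a clique (the inside-$K$ degree sum is at most $m(m-1)$), and the number of edges within $R$ is nonnegative, contributing a slack term. Writing $\sum_{i=m+1}^n d_i = (\text{cross-edges}) + 2e(R)$, the given equality collapses both slacks to zero simultaneously, yielding $e(K,K)=\binom{m}{2}$ (so $K$ is complete) and $e(R)=0$ (so $R$ is independent). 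That is exactly a split partition.

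The main obstacle I anticipate is the bookkeeping around the boundary index $m$: one must verify that the definition of $m$ as the largest $i$ with $d_i\ge i-1$ is compatible with taking $\{v_1,\dots,v_m\}$ as the clique, in particular that no vertex outside this set is forced into the clique and no vertex inside is forced out. This requires the standard observation that $d_i\ge i-1$ for all $i\le m$ and $d_i<i-1$ for $i>m$ (using monotonicity of the $d_i$), which controls how many vertices can possibly lie in a maximum clique. Since this is the Hammer--Simeone theorem quoted from the literature, I would present the double-counting identity as the conceptual core and treat the index-threshold verification as the routine-but-essential technical check.
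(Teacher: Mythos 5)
The paper does not prove this statement at all: Theorem \ref{th:HS} is quoted from the literature (Hammer--Simeone \cite{HS}, Tyshkevich--Melnikov--Kotov \cite{T81}) and used as a black box, so there is no in-paper argument to compare against. Judged on its own, your proposal follows the standard proof of the Hammer--Simeone theorem, and its conceptual core is sound. The reverse direction is essentially complete: writing $K=\{v_1,\dots,v_m\}$ and $R=\{v_{m+1},\dots,v_n\}$, one has $\sum_{i=1}^m d_i = 2e(K)+e(K,R)$ and $\sum_{i=m+1}^n d_i = e(K,R)+2e(R)$, so the hypothesis forces $2e(K)-2e(R)=m(m-1)$; since $2e(K)\le m(m-1)$ and $e(R)\ge 0$, both slacks vanish, i.e.\ $K$ is a clique and $R$ is independent. (Note the small sign slip: you wrote $\sum_{i=1}^m d_i\ge m(m-1)+(\text{cross-edges})$, but your own parenthetical shows you mean $\le$; the collapse-of-slacks argument is otherwise correct. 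Note also that this direction never uses the defining property of $m$ -- equality at \emph{any} index would force a split partition -- which slightly simplifies your presentation.)

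The one place where your plan is still a sketch rather than a proof is the forward direction's claim that the split partition ``can be taken to be exactly $(\{v_1,\dots,v_m\},\{v_{m+1},\dots,v_n\})$.'' You correctly flag this as the obstacle, but the degree-threshold observation you invoke ($d_i\ge i-1$ for $i\le m$, $d_i<i-1$ for $i>m$) does not by itself deliver it. The standard repair: given any split partition $(C,I)$ with $|C|=c$, first move into $C$ every $I$-vertex adjacent to all of $C$, so that afterwards no $I$-vertex is adjacent to all of $C$. Then every $v\in C$ has $\deg v\ge c-1$ while every $v\in I$ has $\deg v\le c-1$, whence $d_c\ge c-1$ and $d_{c+1}\le c-1<c$, so $m=c$ exactly; moreover the multiset of the top $c$ degrees coincides with the multiset of $C$-degrees (any ties at value $c-1$ between $C$- and $I$-vertices do not affect either sum), so your two-way edge count at index $m$ goes through. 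With that lemma inserted, your argument is a correct and self-contained proof of the cited theorem.
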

\begin{remark}\label{th:name}
Consequently,  if one realization of a   degree sequence $D$  is  a split graph, then all realizations of $D$ are split graphs as well. Such a degree sequence is referred  as
\textbf{\em split degree sequence}.
\end{remark}

We will write $G=((U,W),E)$ to mean that $G$ is a split graph with vertex set $U\cup W$, $U$ is a clique and $W$ is an independent set. Let us remark that $U$ and $W$ are not necessarily unique.

\begin{theorem}\label{tm:split-in-non-graphic}
If $\dd{n}{c_1}{c_2}$ is not \FG,  then $\dd{n}{c_1}{c_2}$ contains a split degree sequence.
\end{theorem}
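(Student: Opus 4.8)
The plan is to prove the contrapositive‐style statement by exhibiting an explicit split degree sequence inside $\dd{n}{c_1}{c_2}$ whenever the region fails to be fully graphic. By the machinery already developed in Section~\ref{sec:non-graphic}, the region $\dd{n}{c_1}{c_2}$ being not \FG\ means that for some admissible $\Sigma$ the least \eg\ sequence $\lb(n,\Sigma,c_1,c_2)$ fails the Erdős--Gallai property (Theorem~\ref{tm:SIMPLE}(2) together with Corollary~\ref{th:check-simple}). Since $\lb(n,\Sigma,c_1,c_2)$ is \simple, it has the form $(c_1)_\alpha\,a\,(c_2)_{n-1-\alpha}$, and by the Tripathi--Vijay Theorem~\ref{tm:TV+} the failure of \eqref{eq:EG-prop} occurs at $k=\alpha$ or $k=\alpha+1$. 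So first I would pin down, among all non‐graphic \simple\ sequences in the region, one whose Erdős--Gallai inequality fails, and record the exact index $k$ at which it fails.

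The key idea is then that the boundary case of the Hammer--Simeone criterion (Theorem~\ref{th:HS}) is exactly where the \eg\ inequality holds with equality, so a split degree sequence sits precisely ``one step below'' a minimal non‐graphic sequence. Concretely, I would start from the non‐graphic \simple\ sequence $E=\lb(n,\Sigma,c_1,c_2)$ and decrease its sum by applying perturbations of the form $E\mapsto E+1^{+\ell}_{-m}$ in reverse, or more directly, I would consider the \simple\ sequences with the same $c_1,c_2,n$ but smaller $\Sigma$, walking down until the \eg\ inequality at the critical index $k$ turns from violated to satisfied with equality. At that transition the sequence is graphic (all \eg\ inequalities hold, using Proposition~\ref{th:trivi} for the ranges $k\le c_2$ and $k>c_1$, and Theorem~\ref{tm:TV+} to limit the checks to $k=\alpha,\alpha+1$) and simultaneously meets the Hammer--Simeone equality, hence is a split degree sequence. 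I would verify that this sequence still lies in $\dd{n}{c_1}{c_2}$, i.e.\ its entries remain in $[c_2,c_1]$, which follows because lowering $\Sigma$ within the admissible range keeps the \simple\ form with $c_2\le a\le c_1$ as in the proof of Theorem~\ref{tm:SIMPLE}(1).

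The main obstacle I anticipate is ensuring that the ``one step down'' actually produces an \emph{exact} equality in the Hammer--Simeone sum at the correct index $m$ (the largest $i$ with $d_i\ge i-1$), rather than overshooting from a strict violation directly to a strict inequality. Because the \eg\ left‐ and right‐hand sides change by controlled, integer amounts as $\Sigma$ decreases by the minimal even step (or as one reroutes a unit via a single $+1^{+\ell}_{-m}$ operation reducing the relevant partial sum by one), I expect the violation gap to be closable to equality, but I would need to check the parity constraint (the sum must stay even) does not force a jump past the equality point. If parity is an obstruction, the remedy is to adjust the middle entry $a$ by one and re‐index $m$; handling this case analysis carefully — distinguishing failure at $k=\alpha$ versus $k=\alpha+1$, and tracking how $m$ from Theorem~\ref{th:HS} aligns with the failing index — is where the real work lies. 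Once equality is secured at the right $m$, Theorem~\ref{th:HS} immediately certifies the constructed sequence as a split degree sequence in $\dd{n}{c_1}{c_2}$, completing the proof.
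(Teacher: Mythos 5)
Your overall strategy is genuinely different from the paper's, and it contains a real gap at its crux. The paper does not walk $\Sigma$ down to a graphic/non-graphic transition at all: from the failure of \eqref{eq:EG-prop} for $\lb(n,\Sigma,c_1,c_2)$ it first extracts (Lemma \ref{lm:biner}) a \emph{$\Sigma$-free} witness inequality $\ell c_1>\ell(\ell-1)+(n-\ell)c_2$ with $c_2\le \ell\le c_1$, and then \emph{directly constructs} a split sequence $(\ell+c)_{\alpha}(\ell+c-1)_{\ell-\alpha}(c_2)_{n-\ell}$ (a balanced clique of size $\ell$ joined to $n-\ell$ vertices of degree $c_2$), verifying graphicality by an explicit realization and splitness by the Hammer--Simeone equality; membership in $\dd{n}{c_1}{c_2}$ follows from the witness inequality, and the even-sum requirement is automatic because the Hammer--Simeone equality forces $\Sigma=\ell(\ell-1)+2(n-\ell)c_2$. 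Your plan, by contrast, rests on the claim that at the first even $\Sigma$ where $\lb(n,\Sigma,c_1,c_2)$ turns graphic, the \eg\ inequality holds \emph{with equality} at the Hammer--Simeone index $m$. You have not proved this, and your stated intuition that the violation gap is ``closable'' as $\Sigma$ decreases is false at a fixed index: for $k=\alpha$ with $a\le k$, lowering $a$ leaves the left side $kc_1$ unchanged while \emph{decreasing} the right side through $\min(a,k)$, so the gap can grow (e.g.\ $(5,5,3,1,1,1)\to(5,5,1,1,1,1)$ in $\dd{6}{5}{1}$ has the EG$_2$ gap increase from $3$ to $4$). The repair happens only when $\alpha$ itself drops, and then both the critical index and the index $m$ of Theorem \ref{th:HS} shift, so nothing pins the eventual equality to the correct $m$.

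Moreover, the obstruction is not the one you flagged. Parity at the Hammer--Simeone index is actually free: since $m(m-1)$ is even and $\sum_{i\le m}d_i-\sum_{i>m}d_i\equiv\Sigma\pmod 2$, the quantity $\sum_{i=1}^m d_i-m(m-1)-\sum_{i=m+1}^n d_i$ is automatically even for every even-sum sequence. The genuine difficulty is \emph{index alignment}: the transition sequence could be graphic with this quantity equal to $-2$ or less at its own Durfee-type index $m$ (hence not split by Theorem \ref{th:HS}), while the predecessor's EG violation occurred at a different index, and a gap-change-by-at-most-$2$ argument only forces equality if the failing index of the non-graphic sequence coincides with the $m$ of the graphic one --- which you never establish. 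Your proposed fallback, ``adjust the middle entry $a$ by one,'' changes $\Sigma$ by $1$ and so produces an odd-sum sequence outside $\dd{n}{c_1}{c_2}$; it is inadmissible as stated. So while your blueprint correctly identifies the relevant tools (Theorem \ref{tm:SIMPLE}(2), Theorem \ref{tm:TV+}, Theorem \ref{th:HS}) and even names the alignment issue as ``where the real work lies,'' that work is precisely the missing proof, and without it the argument does not go through; the paper's explicit construction exists exactly to sidestep this transition analysis.
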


\begin{proof}
Fix a non-graphic $D\in \dd{n}{c_1}{c_2}$. Write $\Sigma=\sum D$. By Corollary \ref{th:check-simple}  the sequence $\lb(n,\Sigma,c_1,c_2)$ is not graphic.

\begin{lemma}\label{lm:biner}
There is $c_2\le \ell\le c_1$ such that 	
\begin{equation}\label{nessfulleq:wit}
\ell c_1>\ell(\ell-1) + (n-\ell)c_2.
\end{equation}
\end{lemma}
\begin{proof}[Proof of the Lemma]
Write $\lb(n,\Sigma,c_1,c_2)=(d_1,\dots, d_n)=((c_1)_k,a, (c_2)_{n-2})$, where $c_2\le a\le c_1$.

By the Tripathi-Vijay Theorem \ref{tm:TV+}, either EG${}_k$ or EG${}_{k+1}$ fails.
By Proposition \ref{th:trivi} property (EG${}_\ell$) holds for each $\ell\le c_2$ or $\ell> c_1$. So we can assume that $c_2\le k<c_1$.

\noindent{\bf Case 1}: If (EG${}_k$) fails, then
\begin{equation}\label{eq:witness1}
\sum_{i=1}^kd_i> k(k-1)+\sum_{i=k+1}^n\min(d_i,k),
\end{equation}
and so
\begin{equation}\label{eq:witness12}
kc_1> k(k-1)+ \min(a,k)+ (n-k-1)c_2,
\end{equation}
therefore
\begin{equation}\label{eq:witness13}
kc_1> k(k-1)+  (n-k)c_2.
\end{equation}

\noindent{\bf Case 2}: If (EG${}_{k+1}$) fails, then
\begin{equation}\label{eq:witness2}
\sum_{i=1}^{k+1}d_i> (k+1)k+\sum_{i=k+2}^n\min(d_i,k).
\end{equation}
and so
\begin{equation}\label{eq:witness22}
kc_1+a>(k+1)k+ (n-k-2)c_2,
\end{equation}
therefore
\begin{equation}\label{eq:witness23}
(k+1)c_1>(k+1)k+ (n-k-1)c_2.
\end{equation}
So either $\ell=k$ or $\ell=k+1$ has the following property: $c_2\le \ell\le c_1$ and
\begin{equation}\label{eq:witness-ell}
\ell c_1>\ell(\ell-1)+ (n-\ell)c_2.
\end{equation}
So we proved the Lemma.
\end{proof}
\medskip\noindent Let
\begin{equation}
\sigma=(n-\ell)c_2,\qquad c=\lfloor \sigma/\ell \rfloor,\qquad
{\alpha}=\sigma-\ell c.
\end{equation}
Then
\begin{displaymath}
(n-\ell)c_2={\sigma}={\alpha}(c+1)+(\ell-{\alpha})c.
\end{displaymath}
Consider the following degree sequence
\begin{equation}\label{eq:split-sequence}
D=(\ell+c)_{{\alpha}}(\ell+c-1)_{\ell-{\alpha}}(c_2)_{n-\ell}
\end{equation}
\begin{lemma}\label{lm:real}
The previous degree sequence is $D\in \dd{n}{c_1}{c_2}$, and it is a graphic split sequence.
\end{lemma}

\begin{proof}[Proof]
First observe that $D$ is graphic. Really, it has a realization $G=\langle V,E\rangle$ on the vertex set $V=\{v_j:j<\ell\}\cup\{w_k:k<n-\ell\}$ with
\begin{equation}\label{eq:half}
E=\big\{(v_i,v_j):i<j<\ell\big\}\cup \big\{(v_{i \bmod \ell},w_{i \bmod{(n-\ell)}}):i<\sigma\big\}.
\end{equation}
Next observe that $\ell$ is the largest $j$ such that $d_j\ge j-1$. Moreover,
\begin{multline*}
\sum_{i=1}^\ell d_i= (\ell+c){{\alpha}}+(\ell+c-1)({\ell-{\alpha}})=\sigma+\ell(\ell-1) \\= \ell(\ell-1)+c_2({n-\ell})=\ell(\ell-1)+\sum_{i=\ell+1}^nd_i.
\end{multline*}
Thus the  degree sequence $D$ is a split degree sequence by Theorem \ref{th:HS}.
\end{proof}
\noindent
This  completes the proof of the Theorem \ref{tm:split-in-non-graphic}.
\end{proof}

\subsection*{Tyshkevich product}
\begin{definition}[Tyshkevich \cite{T00}]\label{df:t-product}
Let $G=(\langle U, W\rangle ; E)$ be a split graph and $H=(V,F)$ be an arbitrary graph. We define the  {\em composition} graph $K= G \circ H$ as follows: $K$ consists of a copy of $G$, and a copy of $H$ and of all the possible new edges $(u,v)$  where $u \in U, v \in V.$ More formally,
\begin{displaymath}
V(K)=U\cup W\cup V \text{ and } E(K)=E\cup F\cup\{(u,v):u\in U, v\in V\}.
\end{displaymath}
\end{definition}
\noindent Observe that the first operand in this operation is always a split graph.

\medskip\noindent
Barrus \cite[Theorem 3.5]{EMT}  proved the following (see also \cite[Theorem 6]{barrus16}):

\begin{theorem}[Barrus]\label{tm:t-main}
Assume that  $G=(\langle U, W\rangle ; E)$ is a split graph and $H=(V,F)$ is an arbitrary graph. Let $K= G\circ H$. Then
\begin{equation}\label{eq:t-main`'}
|\mathcal{G}(\mathbf {d}(K))|=|\mathcal{G}(\mathbf {d}(G))|\cdot |\mathcal{G}(\mathbf {d}(H))|.
\end{equation}
\end{theorem}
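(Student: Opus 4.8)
The plan is to prove the multiplicative formula for the number of realizations of the degree sequence of the composition $K = G \circ H$. The key structural observation to establish first is that the partition of the vertex set of any realization of $\mathbf{d}(K)$ into the ``clique-like'' part (coming from $U$ together with the $G$-side) and the ``independent-like'' part is essentially forced by the degree sequence. More precisely, in $K$ every vertex of $U$ is adjacent to every vertex of $V$, so the vertices in $U$ have very large degree (at least $|W|+|U|-1+|V|$), while vertices in $V$ and $W$ have smaller degrees. I would like to argue that any realization $K'$ of $\mathbf{d}(K)$ decomposes canonically as $G' \circ H'$, where $G'$ is a realization of $\mathbf{d}(G)$ on the vertex set $U \cup W$ and $H'$ is a realization of $\mathbf{d}(H)$ on $V$, and that this decomposition is a bijection with the set of pairs $(G', H')$ of such realizations.

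First I would set up the bijection explicitly. Given a pair $(G', H')$ with $G' \in \mathcal{G}(\mathbf{d}(G))$ and $H' \in \mathcal{G}(\mathbf{d}(H))$, the composition $G' \circ H'$ is a realization of $\mathbf{d}(K)$ by the very definition of the Tyshkevich product, since the degrees add in exactly the prescribed way (each $u \in U$ gains $|V|$ from the all-edges block, each $v \in V$ gains $|U|$, and vertices of $W$ are unchanged). This gives an injective map from pairs to realizations, and injectivity is clear because $G'$ and $H'$ can be recovered by restricting $G' \circ H'$ to $U \cup W$ and to $V$ respectively. The content lies in \textbf{surjectivity}: I must show that every realization of $\mathbf{d}(K)$ arises this way, i.e.\ that in every such realization the edge set between $U \cup W$ and $V$ is forced to be precisely the complete bipartite graph between $U$ and $V$, with no edges from $W$ to $V$.

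The main obstacle, and the heart of the argument, is precisely this forcing step. Here I would exploit that $G$ is a split graph and invoke the degree-theoretic characterization. The degrees of the $U$-vertices are the $|U|$ largest entries of $\mathbf{d}(K)$; by a counting/extremality argument one shows that in any realization these top-degree vertices must be mutually adjacent and adjacent to all of $V$, so they form the clique side, while the $V$-vertices must absorb all their ``extra'' degree through edges to $U$ rather than to $W$. The cleanest route is to use Theorem~\ref{th:HS} (the Hammer--Simeone split characterization) applied to the split part, together with Remark~\ref{th:name}: since $\mathbf{d}(G)$ is a split degree sequence, the clique/independent partition is degree-forced on the $G$-side, which pins down where the cross edges to $H$ can go. Once surjectivity is in hand, the bijection yields $|\mathcal{G}(\mathbf{d}(K))| = |\mathcal{G}(\mathbf{d}(G))| \cdot |\mathcal{G}(\mathbf{d}(H))|$ immediately, since the realizations of $K$ are in one-to-one correspondence with independent choices of a realization on each side.
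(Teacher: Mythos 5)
You should first note that the paper contains no proof of Theorem \ref{tm:t-main} at all: it is quoted from Barrus \cite[Theorem 3.5]{EMT}, so your proposal has to stand on its own merits. Your architecture is the right one (and is essentially how this result is proved in the literature): the map $(G',H')\mapsto G'\circ H'$ is well defined and injective, and the whole content is surjectivity, i.e.\ that every realization of $\mathbf d(K)$ is forced to have the complete join between the $U$-positions and the $V$-positions, no $W$--$V$ edges, and a clique on $U$. But at exactly this point, which you yourself call the heart of the argument, you assert rather than prove. Worse, the route you indicate is circular: applying Theorem \ref{th:HS} ``to the split part'' or ``on the $G$-side'' of a realization $K'$ of $\mathbf d(K)$ presupposes that $K'$ \emph{has} a $G$-side, which is precisely what surjectivity must establish; Hammer--Simeone applied to $\mathbf d(G)$ constrains realizations of $\mathbf d(G)$, not realizations of $\mathbf d(K)$. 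There is also a factual slip: the degree of $u\in U$ in $K$ is $\deg_G(u)+|V|\ge |U|-1+|V|$, not ``at least $|W|+|U|-1+|V|$'' (nothing forces $u$ to be adjacent to all of $W$). And your phrase ``the top-degree vertices'' is not well defined, since $\deg_K(u)=\deg_K(v)$ can occur for $u\in U$, $v\in V$ (take $\deg_G(u)=|U|-1$ and $\deg_H(v)=|V|-1$; both give degree $|U|+|V|-1$).

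The gap can be closed by running the Erdős--Gallai counting on $\mathbf d(K)$ itself, at $k=|U|$, applied to the \emph{fixed position set} $U$ (which sidesteps the tie problem, because $\deg_K(u)\ge |U|-1+|V|\ge \deg_K(v)=\deg_H(v)+|U|\ge |U|\ge \deg_K(w)$ shows $U$ carries $k$ of the largest degrees). Direct edge counting in the split graph $G$ gives $\sum_{u\in U}\deg_G(u)=k(k-1)+\sum_{w\in W}\deg_G(w)$, and since $\min(\deg_K(v),k)=k$ for $v\in V$ while $\min(\deg_K(w),k)=\deg_G(w)$ for $w\in W$, this translates into the equality $\sum_{u\in U}\deg_K(u)=k(k-1)+\sum_{x\notin U}\min(\deg_K(x),k)$. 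Now in any realization $K'$ of $\mathbf d(K)$ one has $\sum_{u\in U}\deg_{K'}(u)=2e(U)+e(U,V\cup W)\le k(k-1)+\sum_{x\notin U}\min(\deg_{K'}(x),k)$, so equality forces: $U$ induces a clique, every $v\in V$ is adjacent to all of $U$, and every $w\in W$ has all its neighbours in $U$ (hence $W$ is independent and there are no $W$--$V$ edges). Consequently $K'[U\cup W]$ realizes $\mathbf d(G)$, $K'[V]$ realizes $\mathbf d(H)$, and $K'=K'[U\cup W]\circ K'[V]$, which is the surjectivity you need; with that lemma inserted, your bijection is complete and the product formula follows.
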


\bigskip
\subsection*{How to obtain ,,not P-stable'' from ,,not almost \FG''?}
\medskip

\begin{theorem}\label{tm:P-stb2EG}
Assume that
\begin{displaymath}
\mD=\bigcup_{k\in {\mathbb N}}\dd{n_k}{c_{k,1}}{c_{k,2}}\text{ and }
\mD'=\bigcup_{k\in {\mathbb N}}\dd{n'_k}{c'_{k,1}}{c'_{k,2}}
\end{displaymath}
are very simple degree sequence regions.  If $\mD$ is not almost \FG, then $\mD'$ is not P-stable provided:
\begin{enumerate}[{\rm (1)}]
\item $\lim_{k\to \infty}(n'_k-n_k)/\ln(n'_k)=+\infty$,
\item $c'_{k,2}\le c_{k,2}$,
\item $c'_{k,1}\ge c_{k,1}+(n'_k-n_k)$.
\end{enumerate}
\end{theorem}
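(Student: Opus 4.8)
\textbf{Proof plan for Theorem \ref{tm:P-stb2EG}.}
The plan is to exploit the Tyshkevich product to amplify a single ``bad'' (non-graphic) sequence in $\mD$ into an exponentially large family of realizations inside $\mD'$, thereby violating any polynomial bound on $\pp{D}$. First I would use the hypothesis that $\mD$ is not almost \FG: this means infinitely many of the blocks $\dd{n_k}{c_{k,1}}{c_{k,2}}$ fail to be fully graphic, so by passing to a subsequence I may assume every $\dd{n_k}{c_{k,1}}{c_{k,2}}$ is not \FG. By Theorem \ref{tm:split-in-non-graphic}, each such block then contains a graphic \emph{split} degree sequence; let $G_k=(\langle U_k,W_k\rangle;E_k)$ be a split graph realizing it, with $|U_k|+|W_k|=n_k$ and all degrees in $[c_{k,2},c_{k,1}]$.

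The heart of the construction is to form $K_k=G_k\circ H_k$ for a suitable graph $H_k$ on $t_k\coloneqq n'_k-n_k$ vertices, chosen so that (i) the resulting degree sequence $\mathbf d(K_k)$ lands in the target block $\dd{n'_k}{c'_{k,1}}{c'_{k,2}}\subset \mD'$, and (ii) $H_k$ has super-polynomially many realizations. For (i) I would check the parameters: $K_k$ has $n_k+t_k=n'_k$ vertices; each vertex of $V(H_k)$ is joined to all of $U_k$, so its degree is its $H_k$-degree plus $|U_k|$, while each clique vertex of $G_k$ gains $t_k$ to its degree. Thus the maximum degree of $K_k$ is at most $c_{k,1}+t_k\le c'_{k,1}$ by hypothesis (3), and the minimum degree is at least $c_{k,2}\ge c'_{k,2}$ — no, more carefully, I must ensure the minimum degree is $\ge c'_{k,2}$, which follows from (2) together with the fact that the new all-to-$U_k$ edges only raise degrees; I would pick $H_k$ to have minimum degree $0$ so its vertices get degree $\ge |U_k|\ge c_{k,2}\ge c'_{k,2}$, and confirm the independent-set vertices of $G_k$ retain degree $\ge c_{k,2}$. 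For (ii), the natural choice is to take $H_k$ to be (a realization of) a degree sequence on $t_k$ vertices admitting $2^{\Omega(t_k)}$ realizations — for instance a perfect matching together with isolated vertices, or any sequence whose realization count is exponential in $t_k$; since $t_k/\ln(n'_k)\to\infty$ by (1), this count is super-polynomial in $n'_k$.

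Finally I would translate many realizations into a large value of $\pp{\cdot}$. By Theorem \ref{tm:t-main}, $|\G(\mathbf d(K_k))|=|\G(\mathbf d(G_k))|\cdot|\G(\mathbf d(H_k))|$, and the analogous factorization holds for any perturbation $\mathbf d(K_k)+1^{-i}_{-j}$ that acts within the $H_k$-part (or on the split part), so the $H_k$-factor cancels in the ratio defining $\pp{\mathbf d(K_k)}$ \emph{unless} the perturbation interacts with the non-graphicness of the $G_k$-block; I would instead argue directly that because $\lb(n_k,\cdot)$ is not graphic, a single perturbation of $\mathbf d(K_k)$ in the split coordinates produces a sequence whose split factor has strictly more realizations, and that the $H_k$-factor $|\G(\mathbf d(H_k))|=2^{\Omega(t_k)}$ multiplies through, forcing $\pp{\mathbf d(K_k)}$ to grow super-polynomially in $n'_k$. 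The main obstacle I anticipate is precisely this last step: pinning down one specific perturbation operation whose numerator carries the full exponential $H_k$-factor while the denominator $|\G(\mathbf d(K_k))|$ does \emph{not}, i.e. verifying that the non-\FG\ witness in $\dd{n_k}{c_{k,1}}{c_{k,2}}$ survives the Tyshkevich composition and yields a ratio of order $|\G(\mathbf d(H_k))|$. Getting the degree-window bookkeeping in (i) exactly right, so that both $\mathbf d(K_k)$ and its perturbed neighbor stay inside $\dd{n'_k}{c'_{k,1}}{c'_{k,2}}$, is the accompanying routine-but-delicate part.
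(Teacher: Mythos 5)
Your set-up matches the paper's: pass to a subsequence on which no block is \FG, invoke Theorem \ref{tm:split-in-non-graphic} to place a graphic split sequence $D_k$ in $\dd{n_k}{c_{k,1}}{c_{k,2}}$ (this is exactly where the non-\FG\ hypothesis is consumed --- it guarantees a split sequence inside the degree window, which is needed because the first operand of a Tyshkevich product must be split), form $K_k=G_k\circ H_k$ with $H_k$ on $t_k=n'_k-n_k$ vertices, and your degree-window bookkeeping via (2) and (3) is the same as the paper's. But the engine of the proof is missing, and the substitute you propose does not work. You choose $H_k$ with $2^{\Omega(t_k)}$ realizations and hope the ratio blows up through the split factor. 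Since $\mathbf d(K_k)$ is the sequence you place in $\mD'$, its own realization count $|\G(\mathbf d(K_k))|=|\G(D_k)|\cdot|\G(\mathbf d(H_k))|$ already carries the exponential factor in the \emph{denominator} of $\pp{\mathbf d(K_k)}$; as you yourself observe, it cancels. Your fallback --- that a perturbation in the split coordinates gives a split factor with ``strictly more realizations'' --- cannot rescue this: P-stability is violated only by a \emph{super-polynomial} ratio, and nothing forces a single perturbation of a split sequence of length $n_k$ to multiply its realization count by more than a polynomial. Note also that the non-graphic witness itself is useless for producing a large ratio: a non-graphic sequence has zero realizations, and $\pp{\cdot}$ is evaluated only at graphic sequences, so ``the non-\FG\ witness surviving the composition'' is not a viable mechanism.

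The paper's construction is precisely the inverse of yours. It takes $H_k$ to be the \emph{uniquely realizable} half-graph sequence $\mathbf h_{t_k}=(2t-1,\dots,t+1,t,t,\dots,2,1)$ (with $t=t_k/2$ in its notation), so that by Theorem \ref{tm:t-main} the composed sequence $\mathbf e_k=\mathbf d(G_k\circ H_k)$ has as few realizations as $D_k$ itself; the instability comes from a single perturbation $1^{+i}_{+j}$ acting \emph{inside the $H$-part}, which converts $\mathbf h_{t_k}$ into $\mathbf h'_{t_k}=\mathbf h_{t_k}+1^{+2t}_{+t}$, a sequence with $\Theta(e^{\delta t_k})$ realizations (Lemma \ref{lm:big-gap}, imported from the ``half-graphs'' paper \cite{horse}). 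Then the Tyshkevich factorization puts the exponential factor only in the \emph{numerator}, giving $\pp{\mathbf e_k}\ge C e^{\delta(n'_k-n_k)}$, which is super-polynomial in $n'_k$ by hypothesis (1). So the missing idea is a rigid/fragile pair: an auxiliary sequence that is uniquely realizable but one perturbation away from exponentially many realizations. Without such a pair --- and you propose none --- your composition produces a sequence that may be perfectly stable, and the proof does not go through.
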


\begin{proof}
We will use a construction, which is similar to the one Jerrum,  McKay and Sinclair derived in \cite[Lemma 8.1]{JMS92}, and based on the result \cite[Corollary 6.2]{horse}.
\begin{lemma}\label{lm:big-gap}
For each natural number $m\ge 1$ the following sequence
\begin{equation}\label{eq:spec-sequence}
\mathbf h_m=(2m-1,2m-2,\dots,m+1,m,m,m-1,\dots,2,1),
\end{equation}
has exactly one realization $H_m=(V,E)$ on the vertex set $V=\{v_1,\dots, v_{2m}\}$, namely
\begin{displaymath}
E=\{(i,j):m+1\le i<j\le 2m\}\cup\{(i,j): 1\le i\le m<j\le 2m, i+j\le 2m+1\}.
\end{displaymath}
However the sequence
\begin{equation}\label{eq:spec-sequence2}
\mathbf h'_m=\mathbf h_m+ 1^{+2m}_{+m}= ({2m-1},2m-2,\dots,m+1,\mathbf{m+1},m-1,m-2,\dots,2,\mathbf{2})
\end{equation}
has at least
$\Theta (e^{{{\delta}m}})$	realizations for some ${\delta}>0$.
\end{lemma}

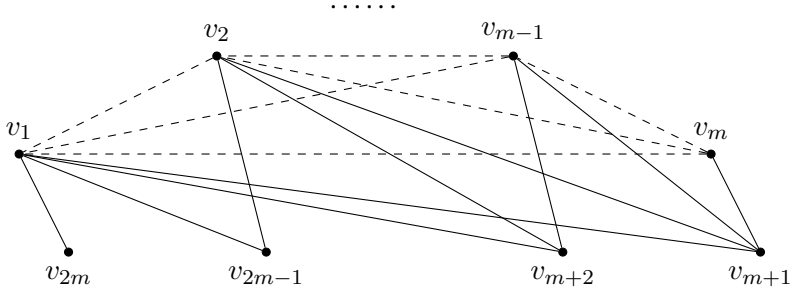
\begin{figure}[H]
	\centering
	\begin{tikzpicture}[scale=1.3]
	\tikzstyle{vertex}=[draw,circle,fill=black,minimum size=3,inner sep=0]
	\node[vertex] (y1) at (0.5,1) [label=south:{$v_{2m}$}] {};
	\node[vertex] (y2) at (2.5,1) [label=south:{$v_{2m-1}$}] {};
	\node[vertex] (y3) at (5.5,1.) [label=south:{$v_{m+2}$}] {};
	\node[vertex] (y4) at (7.5,1) [label=south:{$v_{m+1}$}] {};
	\node[vertex] (x4) at (7,2) [label=north:{$v_{m}$}] {};
	\node[vertex] (x3) at (5,3) [label=north:{$v_{m-1}$}] {};
	\node[vertex] (x2) at (2,3) [label=north:{$v_2$}] {};
	\node[vertex] (x1) at (0,2) [label=north:{$v_1$}] {};
    \node at (3.5,3.5)  {\dots\dots};
	 \draw[dashed] (x2)--(x1);
	 \draw[dashed] (x3)--(x1);
	 \draw[dashed] (x3)--(x2);
	 \draw[dashed] (x4)--(x1);
	 \draw[dashed] (x4)--(x2);
	 \draw[dashed] (x4)--(x3);

	\draw (x1) -- (y4);
	\draw (x1) -- (y3);
	\draw (x1) -- (y2);
	\draw (x1) -- (y1);
	\draw (x2) -- (y3);
	\draw (x2) -- (y2);
	\draw (x2) -- (y4);
	\draw (x3) -- (y3);
	\draw (x3) -- (y4);
	\draw (x4) -- (y4);
	\end{tikzpicture}
\caption{The unique realization  of $\mathbf h_m$.}	
\label{fig:hm}
\end{figure}

\begin{proof}
It is easy to see that the degree sequence $\mathbf h_m $ has exactly one realization (or see, for example, \cite[Lemma 8.1]{JMS92}).

The calculation concerning the number of realizations of  $\mathbf h'_m$  follows from \cite[Corollary 6.2]{horse}, which claims that the bipartite degree sequence
\quad $((m,m-1,\dots, 2, 2), ((m,m-1,\dots, 2, 2))$ \quad has
$$
\Theta \left( {\left(\frac{3+\sqrt{5}}{2}\right)}^{{m}} \right)
$$	
realizations.
\end{proof}
Let $I=\{k\in \mathbb N:\text{$\dd{n_k}{c_{k,1}}{c_{k,2}}$ contains a non-\eg\ sequence}\}$. By the assumption of Theorem, the set $I$ is infinite. Replacing $\mD$ with $\bigcup\{\dd{n_k}{c_{k,1}}{c_{k,2}}:{n\in I}\}$, we can assume that every $\dd{n_k}{c_{k,1}}{c_{k,2}}$ contains a non-\eg\ sequence. Therefore, by Theorem \ref{tm:split-in-non-graphic}, every $\dd{n_k}{c_{k,1}}{c_{k,2}}$ contains a split degree sequence $D_k$. Let $G_k$ be the unique realization of $D_k$. Furthermore, for each $k$ let $H_k^*=H_{\ds{n_k'-n_k}}$ be the unique realization of the graphic degree sequence $\mathbf h_{\ds{n_k'-n_k}}$.

Let $\mathbf e_k$ be the degree sequence of the Tyshkevich product $G_k \circ H_k$. Clearly $\mathbf e_k\in \dd{m_k'}{c'_{k,2}}{c'_{k1}}$ by the construction. Then, by Theorem \ref{tm:t-main}, $\mathbf e_k$ has exactly one realization because both $\mathbf d_k$ and $\mathbf h_{\ds{n_k'-n_k}}$ have exactly one realization.

However, for some $i$ and $j$, the sequence $\mathbf e_k+1^{+i}_{+j}$ has at least
$$
C\cdot  e ^{{{\delta}(n'_k-n_k)}}
$$	
realizations by Theorem \ref{tm:t-main} and by the second  part of the Lemma \ref{lm:big-gap}.

Let $p$ be an arbitrary polynomial. Then
\begin{displaymath}
\lim_{k\to \infty}\ln \Big(\frac{C\cdot  e^{{{\delta}(n'_k-n_k)}}}{p(n'_k)}\Big)\ge
\lim_{k\to \infty}\Big(C'\cdot  ({{n'_k-n_k}})- C''\cdot \ln (n'_k)\Big) =\infty,
\end{displaymath}
by assumption (1) of this Theorem. Thus, the ration of the number of realizations of $\mathbf e_k$ and $p(k)$ tends to infinity. So $\mD'$ is not P-stable.
\end{proof}

\begin{theorem}\label{tm:P2FG}
Assume that $f,g,h\in \mathbb N^{\mathbb N}$ are increasing functions. If the  \cone\ $\mathbb D(f,g)$ is P-stable, then $\mathbb D(f+h,g-h)$ is almost fully graphic provided
\begin{enumerate}[{\rm (1)}]
\item $f(n+k)\le f(n)+k$ for each $n,k\in {\mathbb N}$,
\item $lim_{n\to \infty}h(n)/\ln(n)=\infty$.
\end{enumerate}
\end{theorem}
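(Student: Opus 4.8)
The plan is to prove the contrapositive via Theorem \ref{tm:P-stb2EG}. That is, I want to show that if $\mathbb D(f+h,g-h)$ is \emph{not} almost fully graphic, then $\mathbb D(f,g)$ is not P-stable, contradicting the hypothesis. To set this up I write $\mathbb D(f,g)=\bigcup_{n}\dd{n}{g(n)}{f(n)}$ and $\mathbb D(f+h,g-h)=\bigcup_{n}\dd{n}{g(n)+h(n)}{f(n)-h(n)}$. Since $\mathbb D(f+h,g-h)$ is not almost fully graphic, by Theorem \ref{tm:split-in-non-graphic} infinitely many of the regions $\dd{n}{g(n)+h(n)}{f(n)-h(n)}$ contain a non-graphic (hence non-\eg) sequence; let $\{n_k\}$ enumerate this infinite set.

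The idea is now to feed the two families into Theorem \ref{tm:P-stb2EG} in the following matching: take the ``bad'' family to be $\mD=\bigcup_k\dd{n_k-h(n_k)}{g(n_k)}{f(n_k)}$, which is a subfamily of $\mathbb D(f,g)$ (after verifying the index $n_k-h(n_k)$ lands in the right range), and take the ``target'' family to be $\mD'=\bigcup_k\dd{n_k}{g(n_k)+h(n_k)}{f(n_k)-h(n_k)}\subseteq\mathbb D(f+h,g-h)$. The three hypotheses of Theorem \ref{tm:P-stb2EG} must then be checked with $n_k'=n_k$ and the inner length $n_k-h(n_k)$ playing the role of $n_k$ in that theorem. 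Hypothesis (1) of Theorem \ref{tm:P-stb2EG} reads $\lim_k (n_k-(n_k-h(n_k)))/\ln(n_k)=\lim_k h(n_k)/\ln(n_k)=+\infty$, which is exactly assumption (2) here (evaluated along the subsequence $n_k$). Hypothesis (2), $c'_{k,2}\le c_{k,2}$, becomes $f(n_k)-h(n_k)\le f(n_k)$, trivially true since $h\ge 0$. Hypothesis (3), $c'_{k,1}\ge c_{k,1}+(n'_k-n_k)$, becomes $g(n_k)+h(n_k)\ge g(n_k-h(n_k))+h(n_k)$, i.e. $g(n_k)\ge g(n_k-h(n_k))$, which holds because $g$ is increasing. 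Conclude by Theorem \ref{tm:P-stb2EG} that $\mD'$, and hence $\mathbb D(f+h,g-h)$, is not P-stable — wait, that is the wrong direction; I instead conclude that $\mD'\subseteq\mathbb D(f+h,g-h)$ forces $\mathbb D(f,g)\supseteq\mD$ to be not P-stable, contradicting the assumption that $\mathbb D(f,g)$ is P-stable.

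The main obstacle I anticipate is the bookkeeping of \emph{which} region carries the non-\eg\ sequence versus which supplies the many realizations, and getting the index shift right. Theorem \ref{tm:P-stb2EG} demands that the family $\mD$ with the non-graphic sequences be a subfamily of the P-stable region $\mathbb D(f,g)$, while the wide region $\mD'$ sits inside the target $\mathbb D(f+h,g-h)$. So I must verify that each region $\dd{n_k-h(n_k)}{g(n_k)}{f(n_k)}$ not only lies in $\mathbb D(f,g)$ but also \emph{contains} a non-\eg\ sequence; this is where assumption (1), $f(n+k)\le f(n)+k$, enters. Concretely, a non-graphic sequence of length $n_k$ in $\dd{n_k}{g(n_k)+h(n_k)}{f(n_k)-h(n_k)}$ should be transferred to a non-graphic sequence of length $n_k-h(n_k)$ with extremal degrees $g(n_k)$ and $f(n_k)$, and the subadditivity of $f$ guarantees the lower bound $f(n_k-h(n_k))\le f(n_k)$ so that the shortened region is genuinely a subfamily of $\mathbb D(f,g)$; the split-graph structure from Theorem \ref{tm:split-in-non-graphic} and the Tyshkevich product from Theorem \ref{tm:t-main}, already packaged inside Theorem \ref{tm:P-stb2EG}, then handle the blow-up in the number of realizations. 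Once the index alignment and the containment of the non-\eg\ witness are pinned down, the rest is a direct citation of Theorem \ref{tm:P-stb2EG}.
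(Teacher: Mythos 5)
Your overall strategy --- argue the contrapositive and invoke Theorem \ref{tm:P-stb2EG} --- is exactly the paper's, but the execution founders on a misreading of the notation that then propagates through the whole argument. By the definition of a \cone, $\mathbb D(f+h,g-h)=\bigcup_n\dd{n}{g(n)-h(n)}{f(n)+h(n)}$: the first argument of $\mathbb D(\cdot,\cdot)$ is the \emph{lower} degree bound and the second the \emph{upper} one, so the theorem asserts that the \emph{narrowed} region is almost fully graphic. You instead write $\mathbb D(f+h,g-h)=\bigcup_n\dd{n}{g(n)+h(n)}{f(n)-h(n)}$, a widened region, and this reversal forces you to swap the roles of the two families when applying Theorem \ref{tm:P-stb2EG}. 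That theorem concludes non-P-stability of $\mD'$ from the failure of full graphicity inside $\mD$; to contradict the hypothesis you therefore need $\mD$ (the family carrying the non-\eg\ witnesses) to lie inside $\mathbb D(f+h,g-h)$ and $\mD'$ to lie inside the P-stable region $\mathbb D(f,g)$. You do the opposite --- your $\mD\subseteq\mathbb D(f,g)$ and $\mD'\subseteq\mathbb D(f+h,g-h)$ --- and when you notice mid-proof that the conclusion comes out pointing the wrong way, you simply assert the reversed conclusion (``I instead conclude that \dots\ $\mathbb D(f,g)$ \dots\ is not P-stable''), which Theorem \ref{tm:P-stb2EG} does not license. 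Compounding this, your set-up requires an unproved ``transfer'' step, converting a non-graphic sequence of length $n_k$ into a non-graphic sequence of length $n_k-h(n_k)$ with prescribed extremal degrees $g(n_k)$ and $f(n_k)$; no such lemma exists in the paper, it is far from obvious, and in the correct set-up nothing of the kind is needed.

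The correct bookkeeping, which is the paper's proof almost verbatim, is: assume $\mathbb D(f+h,g-h)$ is not almost \FG\ and set $n_k=k$, $c_{k,1}=g(k)-h(k)$, $c_{k,2}=f(k)+h(k)$, so that $\mD$ \emph{is} $\mathbb D(f+h,g-h)$ itself (no transfer of witnesses required), and set $n'_k=k+h(k)$, $c'_{k,1}=g(n'_k)$, $c'_{k,2}=f(n'_k)$, so that $\mD'\subseteq\mathbb D(f,g)$. Hypothesis (1) of Theorem \ref{tm:P-stb2EG} becomes $h(k)/\ln(k+h(k))\to\infty$, which follows from assumption (2); hypothesis (2) becomes $f(k+h(k))\le f(k)+h(k)$ --- this, and not any monotonicity remark, is where assumption (1), $f(n+k)\le f(n)+k$, is genuinely used; hypothesis (3) becomes $g(k+h(k))\ge \bigl(g(k)-h(k)\bigr)+h(k)=g(k)$, which holds because $g$ is increasing. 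Theorem \ref{tm:P-stb2EG} then yields that $\mathbb D(f,g)$ is not P-stable, the desired contradiction. A telling symptom in your write-up is that assumption (1) is never actually deployed (you only invoke monotonicity of $f$, which is weaker); whenever a stated hypothesis goes unused, it is worth rechecking which way the quantified inequalities must point.
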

	
\begin{proof}[Proof of Theorem \ref{tm:P2FG}]
Assume on the contrary that $\mathbb D(f+h,g-h)$ is not almost \FG. Let
\begin{enumerate}[(i)]
\item $n_k=k$, $c_{k,1}=g(n_k)-h(n_k)$, $c_{k,2}=f(n_k)+h(n_k)$,
\item $n_k'=n_k+h(n_k)$,  $c'_{k,1}=g(n'_{k})$  $c'_{k,2}=f(n'_{k})$.
\end{enumerate}
The assumption Theorem \ref{tm:P2FG} (2) implies that \ref{tm:P-stb2EG}(1) holds. The assumption Theorem \ref{tm:P2FG} (1) implies that \ref{tm:P-stb2EG}(2) holds. Finally Theorem \ref{tm:P2FG} (1) also implies that $g(n_{k,1})\ge c_{k,1}+h(n_k)$, and $g(n'_{k,1})\ge g(n_{k,1})$ because $g$ is monotone. So \ref{tm:P-stb2EG}(3) holds.

Hence, we can apply Theorem \ref{tm:P-stb2EG} to obtain that $\mD(f,g)$ is not P-stable.
\end{proof}

\begin{corollary}\label{cor:P2Gr}
Assume that $0\le {\varepsilon}_2<{\varepsilon}'_2<{\varepsilon}'_1< {\varepsilon}_1\le 1$. If the very simple region $\mD\coloneqq\bigcup_{n\in {\mathbb N}}\dd{n} {\lfloor {\varepsilon}_1\cdot n \rfloor }{\lceil {\varepsilon}_2\cdot n\rceil } $ is P-stable, then the region $\mD'=\bigcup_{n\in {\mathbb N}}\dd{n}
{\lfloor {\varepsilon}'_1\cdot n \rfloor }{\lceil {\varepsilon}'_2\cdot n\rceil } $ is almost fully graphic.
\end{corollary}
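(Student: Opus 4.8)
The plan is to deduce this Corollary as a direct instance of Theorem \ref{tm:P2FG} by choosing the three increasing functions $f,g,h\in \mathbb N^{\mathbb N}$ appropriately. The natural guess is to set
\begin{equation}
f(n)=\lceil {\varepsilon}'_2\cdot n\rceil, \qquad g(n)=\lfloor {\varepsilon}'_1\cdot n\rfloor,
\end{equation}
so that $\mathbb D(f,g)$ is exactly the region $\mD'$ we want to conclude is almost fully graphic, and to pick $h$ so that $f-h$ sits below $\lceil {\varepsilon}_2 n\rceil$ and $g+h$ sits above $\lfloor {\varepsilon}_1 n\rfloor$. Wait---I must line up the roles correctly: Theorem \ref{tm:P2FG} assumes $\mathbb D(f,g)$ is P-stable and concludes $\mathbb D(f+h,g-h)$ is almost fully graphic. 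So I should instead take $\mathbb D(f,g)$ to be the given P-stable region $\mD$, i.e. set $f(n)=\lceil {\varepsilon}_2\cdot n\rceil$ and $g(n)=\lfloor {\varepsilon}_1\cdot n\rfloor$, and then choose $h$ so that $\mathbb D(f+h,g-h)$ lands inside $\mD'$.

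The key computation is to produce an increasing $h\in\mathbb N^{\mathbb N}$ with
\begin{equation}
\lceil {\varepsilon}_2\cdot n\rceil + h(n)\le \lceil {\varepsilon}'_2\cdot n\rceil
\quad\text{and}\quad
\lfloor {\varepsilon}_1\cdot n\rfloor - h(n)\ge \lfloor {\varepsilon}'_1\cdot n\rfloor,
\end{equation}
at least for all large $n$. Because ${\varepsilon}_2<{\varepsilon}'_2$ and ${\varepsilon}'_1<{\varepsilon}_1$ are strict, the gaps $({\varepsilon}'_2-{\varepsilon}_2)n$ and $({\varepsilon}_1-{\varepsilon}'_1)n$ grow linearly, so I can take $h(n)=\lfloor \delta\cdot n\rfloor$ for a small positive rational $\delta$ with $\delta<\min({\varepsilon}'_2-{\varepsilon}_2,\ {\varepsilon}_1-{\varepsilon}'_1)$; the two displayed inequalities then hold for all sufficiently large $n$ after absorbing the $O(1)$ rounding errors. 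Such an $h$ is increasing and satisfies condition (2) of Theorem \ref{tm:P2FG} trivially, since $h(n)/\ln(n)\to\infty$ when $h$ grows linearly. I would verify condition (1), $f(n+k)\le f(n)+k$, directly from $f(n)=\lceil{\varepsilon}_2 n\rceil$ and ${\varepsilon}_2\le 1$: indeed $\lceil {\varepsilon}_2(n+k)\rceil\le \lceil {\varepsilon}_2 n+k\rceil=\lceil{\varepsilon}_2 n\rceil+k$ because $k$ is an integer and ${\varepsilon}_2 k\le k$.

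Having arranged these, Theorem \ref{tm:P2FG} gives that $\mathbb D(f+h,g-h)$ is almost fully graphic, and since $\mathbb D(f+h,g-h)\subseteq \mD'$ for large $n$ by the inequalities above (each slice $\dd{n}{g(n)-h(n)}{f(n)+h(n)}$ is contained in $\dd{n}{\lfloor{\varepsilon}'_1 n\rfloor}{\lceil{\varepsilon}'_2 n\rceil}$), I need the elementary monotonicity remark that a subregion of an almost fully graphic region which contains it is itself almost fully graphic---or more carefully, that $\mD'$ differs from $\mathbb D(f+h,g-h)$ only by finitely many slices plus slices that are themselves fully graphic, so $\mD'$ is almost fully graphic too. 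The main obstacle I anticipate is purely bookkeeping: aligning the direction of the inclusions (a region with a \emph{larger} maximum $c_1$ and \emph{smaller} minimum $c_2$ is \emph{larger}, so one must check that passing from $\mathbb D(f+h,g-h)$ to $\mD'$ shrinks the region rather than enlarging it, which is exactly what the chosen inequalities guarantee) and handling the finitely many small values of $n$ where the rounding slack fails, which are harmlessly swept into the ``almost'' exception set.
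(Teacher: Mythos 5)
Your proposal is correct and takes essentially the same route as the paper, which gives no separate argument for this corollary precisely because it is the direct instantiation of Theorem \ref{tm:P2FG} you describe: $f(n)=\lceil{\varepsilon}_2 n\rceil$, $g(n)=\lfloor{\varepsilon}_1 n\rfloor$, and a linearly growing $h(n)=\lfloor\delta n\rfloor$ with $\delta<\min({\varepsilon}'_2-{\varepsilon}_2,\,{\varepsilon}_1-{\varepsilon}'_1)$, whose conditions (1) and (2) you verify exactly as intended. Your only slip is the transient parenthetical claim that $\mathbb D(f+h,g-h)\subseteq \mD'$ --- the chosen inequalities in fact give the opposite, needed inclusion $\dd{n}{\lfloor{\varepsilon}'_1 n\rfloor}{\lceil{\varepsilon}'_2 n\rceil}\subseteq \dd{n}{g(n)-h(n)}{f(n)+h(n)}$ for all large $n$, so that $\mD'$ sits inside the almost fully graphic region up to finitely many sequences --- and your closing bookkeeping remark already corrects this direction, so the argument as finally stated is sound.
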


\section{A common bound of the growing rate in P-stable regions}

In the literature, various P-stable families of degree sequences are described. However, the empirical evidence is that the polynomial $p_0(n)=n^{10}$ has the following property:  if $\mD$ is  P-stable, then $\pp D\le p_0(|D|)$ for all but finitely many  $D\in \mD$. This observation is notable and leads to the following bold conjecture.

\begin{conjecture}
There is a polynomial $p^*(n)$ such that for each  P-stable family $\mD$ of degree sequences  (or, just  for each P-stable simple region $\mD$)
$$\pp D\le p^*(|D|)$$
for all but finitely many graphic $D\in \mD$.
\end{conjecture}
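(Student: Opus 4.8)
The plan is to promote $p^\ast=3\cdot n^{9}$ to a universal bound by showing that P\nobreakdash-stability forces all but finitely many graphic members of the family to satisfy the structural inequality \eqref{eq:weaker-revisitedd}; Theorem \ref{tm:jms-plusss} would then supply $\pp{D}\le 3\cdot n^{9}$ uniformly, matching the empirical $n^{10}$. I would first attack the (weaker) parenthetical version for a P\nobreakdash-stable simple region $\mD[\varphi]$, where the machinery of Section \ref{sec:non-graphic} is available. Here the task collapses nicely: by Theorem \ref{tm:leg-p-stable} it suffices to verify \eqref{eq:weaker-revisitedd} for the single sequence $\lb(n,\Sigma,c_1,c_2)$ attached to each admissible parameter tuple, since the inequality for $\lb$ propagates to every $D\in\dds{n}{\Sigma}{c_1}{c_2}$. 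So the whole problem is reduced to understanding when, across a P\nobreakdash-stable region, the least \eg\ sequences can violate \eqref{eq:weaker-revisitedd}.

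The heart of the argument is the contrapositive, and my intention is to upgrade the split/Tyshkevich explosion of Theorems \ref{tm:split-in-non-graphic} and \ref{tm:P-stb2EG}. Suppose $\lb(n,\Sigma,c_1,c_2)$ violated \eqref{eq:weaker-revisitedd} for infinitely many tuples. Just as a non-\eg\ sequence yields a rigid split witness $D_k$ with a unique realization $G_k$ (Theorem \ref{tm:split-in-non-graphic}), I would try to extract from a violation of \eqref{eq:weaker-revisitedd} a comparably rigid clique/independent-set block, compose it through the gadget $H_{n'-n}$ of Lemma \ref{lm:big-gap}, and invoke Theorem \ref{tm:t-main}. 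Because $H_m$ has exactly one realization while $H'_m$ has $\Theta(e^{\delta m})$, a single perturbation $\mathbf e_k+1^{+i}_{+j}$ would acquire exponentially many realizations, pushing $\pp{\cdot}$ past every polynomial and contradicting P\nobreakdash-stability. If the exploded sequence could be kept inside $\mD$ itself, P\nobreakdash-stability of $\mD$ would force \eqref{eq:weaker-revisitedd} cofinitely, and Theorem \ref{tm:jms-plusss} would finish with $p^\ast=3\cdot n^{9}$.

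The main obstacle is exactly this last localization, and it is why the statement is posed as a conjecture. As it stands, Theorem \ref{tm:P2FG} only delivers almost-full-graphicness of a \emph{shrunken} region $\mathbb D(f+h,g-h)$, because the gadget $H_{n'-n}$ consumes a degree gap of size $h$: the explosion lands in a smaller region, not in the original one, so the clean implication ``P\nobreakdash-stable $\Rightarrow$ satisfies \eqref{eq:weaker-revisitedd}'' is not available off the shelf. Moreover, the example $\mD_0$ of Theorem \ref{tm:d_0} already shows that a P\nobreakdash-stable simple region can violate the tighter $\jms$ inequality while still obeying \eqref{eq:weaker-revisitedd}, so the correct target really is \eqref{eq:weaker-revisitedd} and nothing sharper. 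What is genuinely missing is a \emph{quantitative converse} to Theorem \ref{tm:jms-plusss}: a proof that once \eqref{eq:weaker-revisitedd} fails by a growing margin, $\pp{D}$ must grow super-polynomially, with the explosion taking place inside the family and without surrendering the degree gap. Establishing such a converse — presumably by reading off the forced split structure from a large $\pp{D}$ and feeding it into a gap-preserving Tyshkevich construction — is the step I expect to resist routine argument, and handling arbitrary P\nobreakdash-stable families (which lack the parametric control of simple regions) lies one level further still.
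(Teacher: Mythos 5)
You are attempting to prove a statement that the paper itself leaves open: it appears only as a Conjecture, with no proof anywhere in the text, so there is no ``paper proof'' to match, and your proposal does not close the gap either—as you candidly admit in your final paragraph. The decisive missing piece is that your plan would establish something strictly stronger than the conjecture, namely a dichotomy: every P-stable family satisfies \eqref{eq:weaker-revisitedd} for all but finitely many members (whence $\pp{D}\le 3n^9$ by Theorem \ref{tm:jms-plusss}), or else fails to be P-stable at all. But Theorem \ref{tm:jms-plusss} is a one-way sufficient condition, and nothing in the paper suggests its hypothesis is necessary for polynomial growth of $\pp{D}$. Looking at where the hypothesis actually enters the proof of Lemma \ref{lm:path-rev}—only at the final step \eqref{eq-rev:main3}, where a violation by additive margin $2$ is contradicted—the natural expectation is that a sequence failing \eqref{eq:weaker-revisitedd} by a bounded amount $t$ still admits alternating trails of length $O(t)$ between the perturbed vertices, yielding bounds of the shape $n^{O(t)}$ rather than super-polynomial blow-up. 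Such a family would be P-stable with a polynomial of higher degree, which refutes the first step of your plan while remaining perfectly consistent with the conjecture (whose universal $p^*$ need not be $3n^9$, nor even $n^{10}$). In short, your route conflates ``P-stable'' with ``P-stable with the specific exponent delivered by Theorem \ref{tm:jms-plusss}.''

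The second gap is the localization problem, which you correctly identify but underestimate: it is not a technical wrinkle but the entire content of the conjecture. The explosion machinery (Theorems \ref{tm:split-in-non-graphic}, \ref{tm:t-main} and \ref{tm:P-stb2EG}) produces its exponential witness only after composing the rigid split graph with the gadget $H_{n'-n}$, which necessarily increases the maximum degree by $n'-n$ and shrinks the minimum-degree slack—this is exactly why Theorem \ref{tm:P2FG} can only conclude almost-full-graphicness of the \emph{smaller} region $\mathbb D(f+h,g-h)$, not of $\mathbb D(f,g)$ itself. For a simple region one might hope to absorb the parameter shift, but for an arbitrary P-stable family $\mD$—which is just a set of sequences with no parametric closure properties—there is no mechanism at all for placing the exploded sequence $\mathbf e_k+1^{+i}_{+j}$, or anything comparable, inside $\mD$, so P-stability of $\mD$ is never contradicted. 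Your proposed ``quantitative converse'' to Theorem \ref{tm:jms-plusss} (failure of \eqref{eq:weaker-revisitedd} by a growing margin forces super-polynomial $\pp{D}$, with the witness kept inside the family and the degree gap preserved) is precisely the open problem, not a lemma extractable from the paper's toolkit; until it is proved—and the bounded-margin heuristic above suggests it must at minimum be reformulated with a margin growing in $n$—the conjecture remains open.
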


\bibliographystyle{plain}

\section*{Appendix: the different definitions of P-stability are really equivalent}\label{sec:app}

Given a degree sequence $D$ of length $n$, define the following families of
degree sequences:
\begin{align*}
\DD{-+} &=\big\{D+1^{-i}_{+j}: 1\le i\ne j\le n \big\},\medskip\\
\DD{++} &=\big\{D+1^{+i}_{+j}: 1\le i\ne j\le n \big\},\medskip\\
\DD{+2} &=\big\{D+1^{+i}_{+i}: 1\le i\le n \big\}.
\end{align*}
The families $\DD{--} $ and $\DD{-2} $ are defined analogously.

In 1989 Jerrum and Sinclair introduced the so-called \emph{Jerrum-Sinclair Markov Chain} (JSMC) in their seminal paper \cite{JS89} on the approximation of the zero-one permanents. Later they used the same Markov chain to sample certain graph realization classes on labelled vertices in  \cite{JS90}. They introduced there the notion of P-stability: a family $\mathcal D$  of degree sequences  is \textbf{P-stable } if and only if there is a polynomial $p(n)$ such that for each graphic  sequence $D\in \mathcal D$ with length $n$ we have
\begin{displaymath}\tag{JS}
|\mathbb G(\DD{--})\cup \mathbb G(\DD{-2})|\le p(n)\cdot |\mathcal  G(D)|.
\end{displaymath}
In 1992 Jerrum, McKay and Sinclair gave more results about P-stable degree sequences (\cite[Subsection 8.2]{JMS92}). However they used there a different definition. They say that a family $\mathcal D$  of degree sequences  is {\em P-stable } if and only if there is a polynomial $p_1(n)$ such that for each graphic  sequence $D\in \mathcal D$ with length $n$ we have
\begin{displaymath}\tag{JMS}
|\mathbb G(\DD{-+})|\le p_1(n)\cdot |\mathcal  G(D)|.
\end{displaymath}
The authors made the remark (without proof) that, while the two definitions formally are different, they are equivalent.

There has been studied another Markov chain based approach to sample graph realizations for at least three decades, the chain is called \emph{switch Markov Chain}. In 2022, the rapid mixing of the switch Markov chain was proven on P-stable degree sequences, encompassing all simple, bipartite, and directed degree sequences (\cite{P-stable}), providing the currently available strongest result. However, that  paper presented a third definition for P-stability \cite[Definition 1.2]{P-stable}. They say that a family $\mathcal D$  of degree sequences  is {\em P-stable } if and only if there is a polynomial $p_2(n)$ such that for each graphic sequence  $D\in \mathcal D$ with length $n$ we have
\begin{displaymath}\tag{EGMMSS}
|\mathbb G(\DD{++})|\le p_2(n)\cdot |\mathcal  G(D)|.
\end{displaymath}
The paper states (again, without  proof) that this definition is equivalent to the former ones.

The following theorem yields immediately that the three definitions of P-stability  are indeed equivalent.

\begin{theorem}\label{tm:appendix}
Assume that $D=(d_1,\dots, d_n)$ is a graphic degree sequence.
\begin{enumerate}[\rm (a)]
\item \label{gdpp} $\max\big(|\mathbb G(\DD {++})|,|\mathbb G(\DD {--})|\big)\le n^2\cdot\big( |\mathbb G(\DD {+-})|+|\mathcal G(D)|\big), $
\item $\max\big(|\mathbb G(\DD {+2})|,|\mathbb G(\DD {-2})|\big )\le n^2\cdot |\mathbb G(\DD {+-})|$,
\item $|\mathbb G(\DD {+-})|\le (n^4+n^2)\cdot \min\big(|\mathbb G(\DD {++})|,|\mathbb G(\DD {--})|\big)$.
\end{enumerate}
\end{theorem}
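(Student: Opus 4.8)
The plan is to prove all three inequalities by the standard bounded-multiplicity injection used throughout this area: starting from a graph in the larger family, perform one or two elementary edge operations to land in the smaller family, and then bound how many sources can yield a given target by recording the few edges that were changed. In every case the map will have the form $G\mapsto(G^\ast,\mathcal E)$ where $G^\ast$ is the target graph and $\mathcal E$ is the small set of altered edges; since $G$ is recovered from $(G^\ast,\mathcal E)$, the map is injective, and the multiplicity is the number of possible $\mathcal E$, which is $n^2$ when one edge changes and $n^4$ when the change is confined to four vertices.

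For (a), take $G$ with degree sequence $D+1^{+i}_{+j}$, so $v_i,v_j$ each carry one unit of excess. If $v_iv_j\in E(G)$, delete it to reach a realization of $D$; otherwise $v_i$ has a neighbour $w\ne v_j$, and deleting $v_iw$ gives degree sequence $D+1^{+j}_{-w}\in\DD{+-}$. Either way $G$ is recovered from its image and the single restored edge, so $|\mathbb G(\DD{++})|\le n^2\big(|\mathcal G(D)|+|\mathbb G(\DD{+-})|\big)$; the bound for $\DD{--}$ is dual, adding the edge $v_iv_j$ (or $v_iw$ to a non-neighbour when $v_iv_j$ is already present). Part (b) is the same argument with a single vertex of excess (resp. deficiency) two: delete one incident edge (resp. add one incident non-edge) to reach $\DD{+-}$, again with $n^2$ reconstructions, the needed edge or non-edge existing because $\deg v_i\in\{d_i+2,d_i-2\}$ is strictly between $0$ and $n-1$.

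Part (c) is where I expect the real work, and it is the inequality forcing the $n^4$ term. Given $G\in\mathbb G(\DD{+-})$ with excess at $v_i$ and deficit at $v_j$, the easy case is that $v_j$ has a non-neighbour $v_k\notin\{v_i,v_j\}$: adding $v_jv_k$ neutralizes the deficit and creates a second excess vertex, landing in $\mathbb G(\DD{++})$ with $n^2$ reconstructions. The obstruction is the degenerate configuration in which $v_j$ is adjacent to every vertex except $v_i$, so $\deg v_j=n-2$ and the deficit can only be repaired through $v_iv_j$, which pushes $v_i$ to excess two. Here I would first add $v_iv_j$ and then relocate one unit of $v_i$'s excess by the swap removing $v_iw$ and adding $v_kw$ for a suitable neighbour $w$ of $v_i$ and non-neighbour $v_k\ne v_i$, reaching $\mathbb G(\DD{++})$ after altering three edges spanned by the four vertices $v_i,v_j,v_k,w$; this contributes $n^4$ reconstructions. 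Summing gives $|\mathbb G(\DD{+-})|\le(n^2+n^4)|\mathbb G(\DD{++})|$, and the bound against $\mathbb G(\DD{--})$ follows by the symmetric construction, removing an edge $v_iv_k$ and, in the dual near-empty degeneracy where $\deg v_i=1$, using a length-three swap.

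The main obstacle, and the step demanding genuine care, is precisely this degenerate branch. One must verify that the required swap always exists in the near-complete configuration, and, more importantly, that the target sequence in $\DD{++}$ is actually realizable: the extreme case $G=K_n\setminus e$ shows that the naive target $D+1^{+a}_{+b}$ can demand a vertex of degree $n$, and if every such target were non-graphic then $\mathbb G(\DD{++})$ would be empty and the inequality would collapse. The point is that the only base $D$ producing this failure has one vertex of degree $n-3$ and the rest of degree $n-1$, which is \emph{not} graphic; thus the hypothesis that $D$ is graphic is exactly what excludes the bad configuration. I would therefore isolate these boundary graphs explicitly, invoke the graphicality of $D$ to guarantee a realizable $\DD{++}$ (resp. $\DD{--}$) target, and check the existence of the swap and the $n^4$ multiplicity bound in each.
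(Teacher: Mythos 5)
Your overall strategy is exactly the paper's: parts (a) and (b) coincide with its proof (delete or add one incident edge, reconstruct the source from the single changed pair, multiplicity $n^2$), and in (c) your two-branch construction --- add one edge at the deficit vertex in the generic case, and, in the degenerate case where the deficit vertex is adjacent to everything except the excess vertex, perform the three-edge alteration $G+(v_i,v_j)-(v_i,w)+(w,v_k)$ spanning four vertices --- is precisely the paper's map, with the same $n^2+n^4$ multiplicity count and the same analogous treatment of $\DD{--}$.

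However, the step you defer is the heart of (c), and your sketch of it contains a genuine error. First, your worry that the target sequence in $\DD{++}$ might not be realizable is vacuous: the constructed graph $G'$ itself realizes it; the only thing requiring proof is that the swap partners exist, i.e.\ that some neighbour $w$ of the excess vertex has a non-neighbour, equivalently $\deg_G(w)\le n-2$. Second, your claim that the only failing configuration is $G=K_n\setminus e$, so that $D$ has one vertex of degree $n-3$ and the rest of degree $n-1$, is false: the swap fails exactly when \emph{every} neighbour of the excess vertex is a dominating vertex of $G$, and this is a strictly larger family. For instance, with $n=6$ let the excess vertex $v_i$ be adjacent only to $w_1$, let $w_1$ be adjacent to all vertices, let the deficit vertex $v_j$ be adjacent to all vertices except $v_i$, and let the remaining three vertices be adjacent only to $w_1$ and $v_j$; then $D=(5,5,2,2,2,0)$, which is not of your claimed form, yet the degenerate branch is forced and no swap of your type exists. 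So ``isolating these boundary graphs explicitly,'' as you propose, would miss cases. The uniform repair is the paper's Claim: if every $w\in\Gamma_G(v)$ had degree $n-1$ (where $v$ is the excess vertex), then $d_w=n-1$ for each such $w$, since in the degenerate branch these $w$ are distinct from the two perturbed indices; hence in \emph{any} realization $H$ of the graphic sequence $D$ each such $w$ is adjacent to $v$, forcing $\deg_H(v)\ge|\Gamma_G(v)|=d_v+1$, contradicting $\deg_H(v)=d_v$. Your instinct that graphicality of $D$ must be invoked at exactly this point is correct, but your proposed finite case analysis does not implement it, and as written the degenerate branch of (c) is not closed.
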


\noindent
\begin{remark*}The proofs for (a) and (b) are straightforward. However, proving (c) presents a greater challenge.
\end{remark*}

\begin{proof}
We assume that the vertex set of the realizations is $\{v_1,\dots, v_n\}$. \medskip

\medskip
\noindent (a)
Assume that $G\in  \mathcal G(D+1^{+i}_{+j}) \subset  \mathbb G(\DD {++})$ for some $1\le i\ne j\le n$. We will define a graph $G'\in \mathbb G(\DD {+-})\cup \mathcal G(D)$ such that  the symmetric difference of $E(G)$ and $E(G')$ is one edge.

If $(v_i,v_j)\in E(G)$, then let $G'=G-(v_i,v_j)$. Then
$\mathbf d(G')=\mathbf d(G)+1^{-i}_{-j}=D$, so $G'$ satisfies the requirements.

Assume that $(v_i,v_j)\notin E(G)$. Since $\deg_G(v_i)=d_i+1\ge 1$, there is a $k$
such that $v_iv_k$ is an edge in $G$. Since $(v_i,v_j)$ is a non-edge,  $k\ne j$.
Let $G'=G-v_iv_k$. Then $\mathbf d(G')=\mathbf  d(G)+1^{-i}_{-k}= D+1^{+i}_{+j}+1^{-i}_{-k}=D^{+j}_{-k}$, so $G'$ satisfies the requirements.

From $G$ you can get back $G'$ provided you know the  symmetric difference of $E(G)$ and $E(G')$, which is just one pair of vertices. Since there are less than  $n^2$ many pairs,   for any $H\in \mathbb G(\DD {+-})\cup \mathcal G(D) $
there are less than  $n^2$ many $G\in \mathbb G(\DD {++})$ such that $G'=H$. So we proved $|\mathbb G(\DD {++})|\le n^2\cdot\big( |\mathbb G(\DD {+-})|+|\mathcal G(D)|\big).$

The inequality $|\mathbb G(\DD {--})|\big)\le n^2\cdot\big( |\mathbb G(\DD {+-})|+|\mathcal G(D)|\big)$ can be proved analogously

\medskip
\noindent (b)
Assume that $G\in  \mathcal G(D+1^{+i}_{+i}) \subset  \mathbb G(\DD {+2})$ for some $1\le i\le n$.

We will define a graph $G'\in \mathbb G(\DD {+-})$ such that the symmetric difference of $E(G)$ and $E(G')$ is one edge. Since $\deg_G(v_i)=d_i+2>0$, there is $j$ such that $(v_i,v_j)\in E(G)$. Then $G'=G-(v_i,v_j)$ meets the requirements because
$\mathbf d(G')=\mathbf d(G)+1^{-i}_{-j}=D+1^{+i}_{+i}+1^{-i}_{-j}=D+1^{+i}_{-j}$.

From $G$ you can get back $G'$ provided you know the  symmetric difference of $E(G)$ and $E(G')$, which is just one pair of vertices. Since there are less than  $n^2$ many pairs,
 for any $H\in \mathbb G(\DD {+-}) $
there are less than  $n^2$ many $G\in \mathbb G(\DD {+2})$ such that $G'=H$.
So we proved $|\mathbb G(\DD {+2})|\le n^2\cdot |\mathbb G(\DD {+-})|$.

The inequality $|\mathbb G(\DD {-2})|\le n^2\cdot |\mathbb G(\DD {+-})|$
can be proved analogously.

\medskip\noindent
(c) For each $G\in \mathbb G(\DD {+-})$ we will find a $G'\in \mathbb G(\DD {++})$ such that the symmetric difference of $E(G)$ and $E(G')$ is either an edge, or a path of length 3. Fix $i, j$ such that  $G\in \mathcal G(D+1^{+j}_{-i})$. If there is $k\ne i,j$ such that $v_iv_k$ is not an edge, then let $$G'=G+(v_i,v_j).$$ Then $\mathbf d(G')=1^{+j}_{+k}$, so $G'$ satisfies the requirements.

So we can assume that $v_iv_k\in E(G)$ for $k\ne i,j$. Since the $\deg_G(v_i)=d_i-1 \le n-2$, we obtain that $(v_i,v_j)\notin E(G)$. Let $X=\Gamma_G(v_j)$ and write $d=|X|$. Since $\deg_G(v_j)=d_j+1>0$, we have $d\ge 1$.

\medskip
\noindent {\bf  Claim.} {\em There is $v_k\in X$ such that $(v_k,v_\ell)\notin E(G)$ for some $\ell\ne k$.}

\begin{proof}[Proof of the claim]
Assume on the contrary that  $(v_k,v_\ell)\in E(G)$  for each  $v_k\in X$ and $\ell \ne k$. Thus $\deg_G(v_k)=n-1$ for each $v_k\in X$. Since $v_i,v_j\notin X$, it follows that $d_k=\deg_G(v_k)=n-1$ for each $v_k\in X$.
	
Let $H$ be a realization of the graphic sequence $D$ on the vertex set $\{v_1,\dots, v_n\}$. Since $\deg_H(v_k)=d_k=n-1$ for each $v_k\in X$,  and $v_j\notin X$, it follows that $d_j=\deg_H(v_j)\ge |X|=d$. But $|X|=\deg_G(v_j)=d_j+1$, so $d_j<|X|$. Contradiction,  we proved the Claim.
\end{proof}

By the Claim, we can fix  $v_k\in X$ such that $(v_k,v_\ell)\notin E(G)$ for some
$\ell\ne k$.  Then  $\ell\ne i,j$ since $(v_i,v_k)$ and $(v_j,v_k)$ are edges in $G$. Let
\begin{displaymath}
G'=G+(v_i,v_j)-(v_j,v_k)+(v_k,v_\ell).
\end{displaymath}
Then $\mathbf d(G')=D+1^{+j}_{+\ell}$. So $G'$ satisfies the requirements.
Thus we could always define $G'$.

Since there are less than  $n^2$ edges and less than $n^4$ many paths of length 3, for any $H\in \mathbb G(\DD {++}) $ there are less than  $n^2+n^4$ many $G\in \mathbb G(\DD {+-})$ such that $G'H$. So we proved $|\mathbb G(\DD {+-})|\le (n^4+n^2)\cdot |\mathbb G(\DD {++})| $.

The inequality $|\mathbb G(\DD {+-})|\le (n^4+n^2)\cdot |\mathbb G(\DD {--})|
$ can be proved analogously.
\end{proof}

\begin{remark*}
If $D$ is the constant 0 sequence of length $n$, then $|\mathbb G(\DD {++})|>0$,
but   $|\mathbb G(\DD {+-})|=0$ because $\DD {+-}=\emptyset$, so in \ref{tm:appendix}.\eqref{gdpp}, we can not omit  $|\mathcal G(D)|$
from the RHS of the inequality.
\end{remark*}	
\end{document}